\newcommand{\bQ}{\mathbb{Q}}
\newcommand{\bA}{\mathbf{A}}
\newcommand{\bB}{\mathbf{B}}
\newcommand{\bC}{\mathbf{C}}
\newcommand{\bD}{\mathbf{D}}
\newcommand{\bL}{\mathbf{L}}
\newcommand{\bK}{\mathbf{K}}
\newcommand{\bX}{\mathbf{X}}
\newcommand{\bP}{\mathbf{P}}
\newcommand{\cC}{\mathcal{C}}
\newcommand{\sA}{\mathscr{A}}
\newcommand{\fA}{\mathfrak{A}}
\newcommand{\fB}{\mathfrak{B}}
\newcommand{\fT}{\mathfrak{T}}
\newcommand{\tn}{\overline{n}}
\newcommand{\Fraisse}{Fra\"\i{}ss\'e\xspace}
\newcommand{\Nesetril}{Ne\v{s}et\v{r}il\xspace}
\newcommand{\Masulovic}{Ma\v{s}ulovi\'c\xspace}
\newcommand{\restr}{\mathord{\upharpoonright}}
\newcommand{\dotcup}{\mathop{\dot\cup}}
\newcommand{\End}{\operatorname{End}}
\newcommand{\Age}{\operatorname{Age}}
\newcommand{\VGamma}{\operatorname{V}(\Gamma)}
\newcommand{\V}{\operatorname{V}}
\newcommand{\EGamma}{\operatorname{E}(\Gamma)}
\newcommand{\E}{\operatorname{E}}
\newcommand{\VDelta}{\operatorname{V}(\Delta)}
\newcommand{\im}{\operatorname{Im}}
\DeclareMathOperator{\dom}{dom}
\newcommand{\SetColorBlue}%
{\tikzset{EdgeStyle/.append style = {color = blue}}%
\tikzset{VertexStyle/.append style = {color = blue}}
}
\newcommand{\SetVertexCircle}{\tikzset{VertexStyle/.append style = {shape = circle, minimum size = 2pt, fill = none}}}
\newcommand{\SetVertexNone}{\tikzset{VertexStyle/.append style = {shape = coordinate, minimum size = 2pt, fill = none}}\presetkeys [GR] {vertex} {NoLabel = true}{}}
\newcommand{\SetupArcStyle}%
{\tikzset{EdgeStyle/.style = {->}}\tikzset{>=stealth'}}
\def\scf{1}
\theoremstyle{plain}
\newtheorem{theorem}{Theorem}[section]
\newtheorem{proposition}[theorem]{Proposition}
\newtheorem{lemma}[theorem]{Lemma}
\newtheorem{corollary}[theorem]{Corollary}
\theoremstyle{definition}
\newtheorem{definition}[theorem]{Definition}
\newtheorem{example}[theorem]{Example}
\newtheorem*{remark}{Remark}
\newtheorem{observation}[theorem]{Observation}
\title[Homomorphism homogeneous oriented graphs]{The classification of homomorphism homogeneous oriented graphs}
\subjclass[2020]{05C20(05C63)}
\keywords{homogeneous, homomorphism homogeneous, polymorphism homogeneous, oriented graph, classification}
\author[B.\,Pavlica]{Bojana Pavlica$^1$}
\address{$^1$Department of Mathematics and Informatics\\
        Faculty of Sciences\\
        University of Novi Sad\\
        Trg Dositeja Obradovi\'ca 3\\
        21000 Novi Sad\\
        Serbia} 
\email{bojana@dmi.uns.ac.rs}
\author[Ch.\,Pech]{Christian Pech$^2$}
\address{$^2$Institute of Mathematics\\
        Czech Academy of Sciences\\
        \v{Z}itn\'a 25\\ 
        115\,67 Praha 1\\ 
        Czech Republic}
\email{pech@math.cas.cz}
\thanks{The research of the second author was supported by GA~\v{C}R (Czech Science Foundation) grant EXPRO 20-31529X}
\author[M.\,Pech]{Maja Pech$^{1\ast}$} 
\email{maja@dmi.uns.ac.rs}
\thanks{The third author gratefully acknowledges the financial support of the Ministry of Science, Technological Development and Innovation of the Republic of Serbia (Grants No. 451-03-66/2024-03/ 200125 \& 451-03-65/2024-03/200125)}
\thanks{$^\ast$ corresponding author}
\begin{document}
\begin{abstract}
    The modern theory of homogeneous structures begins with the work of Roland \Fraisse.  The theory developed in the last seventy years is placed in the border area between combinatorics, model theory, algebra, and analysis. We turn our attention to its combinatorial pillar, namely, the work on the classification of structures for given homogeneity types, and focus onto the homomorphism homogeneous ones, introduced in 2006 by Cameron and \Nesetril. An oriented graph is called homomorphism  homogeneous if every homomorphism between finite induced subgraphs extends to an endomorphism.
    In this paper we present a complete classification of the countable homomorphism homogeneous oriented graphs. Among these we identify those that are polymorphism homogeneous. Here an oriented graph is called \emph{polymorphism homogeneous} if each of its finite powers is homomorphism homogeneous.
\end{abstract}
\maketitle

\section{Introduction}

The classical notion of homogeneity was introduced by Roland \Fraisse in the early fifties and has been thoroughly studied during the last seventy years (see \cite{Mac11}). Recall that a relational structure is \emph{homogeneous} if every isomorphism between finite  substructures extends to an automorphism. 
At the beginning of this century, the notion of homomorphism homogeneity was introduced by Cameron and \Nesetril in their seminal paper on this topic \cite{CamNes06}. A relational structure is called \emph{homomorphism homogeneous} (shortly HH) if every homomorphism between two of its  finite substructures extends to an endomorphism of the structure in question. In the mentioned paper, this phenomenon was studied for simple graphs and posets, with a number of inspiring and challenging questions posed. This initiated the research on the classification of countable HH relational structures with exactly one binary relation. First results were obtained relatively quickly by Cameron and Lockett \cite{CamLoc10}, and \Masulovic \cite{Mas07} who completely classified HH posets with strict and reflexive order relations, respectively. On the other hand, the classification of countable HH simple graphs is still wide open, and the subject of ongoing research \cite{AraHar19b,AraHar20}. Interestingly, already when allowing loops in undirected graphs the problem of classifying even finite HH structures becomes in a sense untractable.
Namely, it was shown by Rusinov and Schweizer in \cite{RusSch10} that the problem to decide whether a finite graph with loops allowed is HH is coNP-complete. Luckily it seems that HH digraphs with antisymmetric arc relation are more amenable to classification efforts: finite HH tournaments with loops allowed were classified in \cite{IliMasRaj08} (the countable case was completed in \cite{FelPecPec20}). Finite HH uniform oriented graphs (i.e. oriented graphs  with all / with no loops) were classified in \cite{Mas15}. Further insights concerning HH oriented graphs were obtained by Coleman (see \cite{Col20}).

In this paper we set out to classify the countable HH oriented graphs without loops (from now on, instead of ``oriented graphs without loops'', we will write ``oriented graphs''). At this point it is worth mentioning that the analogous problem, namely the classification of homogeneous oriented graphs was carried out by Cherlin in \cite{Cher1998}. It fills a whole volume of the Memoirs of the AMS, and already this fact was quite intimidating at the outset of the project. Fortunately, classifications of HH structures and of homogeneous structures are rather different in nature (but not completely unrelated). To our great delight it turned out that  the classification of HH oriented graphs may be stratified by the countable homogeneous tournaments classified by Lachlan and Woodrow \cite{Lac84,Woo77}. The key insight is that every countable HH oriented graph has a unique (up to isomorphism) homogeneous, HH core (as a consequence of a more general result from \cite{PecPec16b}), and that HH oriented graphs that are cores must be tournaments (and thus homogeneous). This allowed us to finish the classification of countable HH oriented graphs (Theorem~\ref{MainHH} and Theorem~\ref{wccsametour}). 

Closely related to the notion of homomorphism homogeneity is the notion of polymorphism homogeneity. We say that a relational structure is \emph{polymorphism homogeneous} (shortly PH) if each of its finite  powers is homomorphism homogeneous (see Definition~\ref{def:PH}). The notion of polymorphism homogeneity has its origins in universal algebra where it has been used when studying polymorphism clones of countable homogeneous structures (\cite{PecPec18}), and phenomena in universal algebraic geometry (\cite{TotWal21}). First steps towards the classification theory of PH structures were done in \cite{FarJS15,FelPecPec20,PecPec15}.
Concerning the scope of this paper, we can mention that countable PH simple graphs, PH posets with strict and reflexive order relation, and countable PH tournaments (with loops allowed) are completely classified (see \cite{PecPec15,FelPecPec20}). This paper adds the countable oriented graphs to this list (Theorem~\ref{mainphcon} and Theorem~\ref{mainphdiscon}).

Before starting with the exposition of our results, let us fix some notions and notation.
Generally, we are using the usual graph theoretic terminology (see \cite{ChaLesZha16}).
Recall that an \emph{oriented graph} is a digraph such that in between any two distinct vertices there is at most one arc. Formally, an oriented graph may be modelled as a pair $\Gamma=(V,E)$, where
\begin{itemize}
    \item $V$ is a non-empty set of vertices,
	\item $E\subseteq V^2$ is an asymmetric relation, i.e. $\forall x,y\in V:\, (x,y)\in E\Rightarrow (y,x)\notin E$.
\end{itemize}
The elements of $E$ are called \emph{arcs} of $\Gamma$. For every oriented graph $\Gamma$, denote the vertex set and arc set of $\Gamma$ by $\VGamma$ and $\EGamma$, respectively.

For two subsets $B_1$ and $B_2$ of $\VGamma$ we write $B_1\rightarrow B_2$ if $(x,y)\in \EGamma$ for all  $x\in B_1$ and $y\in B_2$. Instead of $\{b_1\}\rightarrow B_2$, $B_1\rightarrow \{b_2\}$, and $\{b_1\}\rightarrow\{b_2\}$ we write $b_1\rightarrow B_2$, $B_1\rightarrow b_2$, and  $b_1\rightarrow b_2$, respectively.

For two vertices $x,y\in \VGamma$ we write $x \sim y$ if either $x\rightarrow y$ or $y\rightarrow x$. 
Correspondingly we write $x\not\sim y$ if it does not hold that $x\sim y$.

The notation $x\sim^\ast y$ means that there is a \emph{semi-walk} from $x$ to $y$ in $\Gamma$. In other words, there is a finite sequence $z_0,\dots, z_k$ (where $k\ge 0$) such that $x=z_0$, $y=z_k$, and for all $0\le i< k$ we have $z_i\sim z_{i+1}$.  
Clearly, $\sim^\ast$ is an equivalence relation on the vertex set of $\Gamma$. Its equivalence classes are the \emph{weakly connected components} of $\Gamma$. If $\Gamma$ has only one weakly connected component, then we say that it is \emph{weakly connected}.  Otherwise, we say that it is \emph{disconnected}.

Throughout the paper, we are going to classify oriented graphs by whether or not they contain certain configurations. Here a \emph{configuration} is nothing else but a finite (unlabelled) oriented graph, and a given oriented graph $\Gamma$ contains a configuration $\Delta$ if some induced subgraph of $\Gamma$ is isomorphic to $\Delta$. By $\Age(\Gamma)$ we denote the class of all configurations contained in $\Gamma$.

We say that two oriented graphs $\Gamma_1$ and $\Gamma_2$ are \emph{homomorphism equivalent} if there is an oriented graph homomorphism from $\Gamma_1$ to $\Gamma_2$ and vice versa.

Finally, throughout the paper $\omega$ denotes the set of non-negative integers. Moreover we use the convention that countable means finite or countably infinite.

Our results are presented in the four sections that follow. In Section~\ref{sec2} and \ref{sec3} we classify weakly connected homomorphism homogeneous and polymorphism homogeneous oriented graphs, respectively. Sections \ref{sec4} and \ref{sec5} handle the cases of disconnected oriented graphs.  The paper ends with an informal discussion that compares the classification of HH oriented graphs with Cherlin's classification of homogeneous oriented graphs (see Section~\ref{discussions}), and outlines perspectives for future research (see Section~\ref{future}).  

\section{Weakly connected homomorphism homogeneous oriented graphs}\label{sec2}

Our approach to the classification of weakly connected homomorphism homogeneous oriented graphs is based on the use of cores:

\begin{definition}
An oriented graph $\Gamma$ is called a \emph{core} if every endomorphism of $\Gamma$ is a self-embedding.
An oriented graph $\Delta$ is a \emph{core of the oriented graph} $\Gamma$ if it satisfies the following conditions:
\begin{enumerate}
	\item $\Delta$ is a core,
	\item $\Delta$ is an induced subgraph of $\Gamma$, and
	\item there exists $h\in \End(\Gamma)$ such that $h[\VGamma]\subseteq \VDelta$.
\end{enumerate} 		
\end{definition}
The following observation is a special case of a model theoretic result about cores of relational structures: 
\begin{proposition}[{\cite[Corollary 6.7]{PecPec16b}}]
Every countable homomorphism homogeneous oriented graph has, up to isomorphism, a unique homomorphism homogeneous core.
Moreover, this core is homogeneous.
\end{proposition}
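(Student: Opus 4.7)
The plan is to construct a core as a carefully chosen retract of $\Gamma$ via a countable iterative argument, then use homomorphism homogeneity to show the core is homogeneous, and finally deduce uniqueness via \Fraisse's theorem.

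For existence, let $\Gamma$ be a countable HH oriented graph. I would inductively build a descending chain $\Gamma = \Delta_0 \supseteq \Delta_1 \supseteq \cdots$ of retracts of $\Gamma$: having a retraction $r_i : \Gamma \to \Delta_i$, enumerate pairs $(\varphi, x)$ where $\varphi$ is a non-embedding homomorphism between finite induced subgraphs of $\Delta_i$ and $x \in V(\Delta_i)$, and use HH of $\Gamma$ to extend $\varphi$ to an endomorphism whose image defines $\Delta_{i+1}$, while fixing a growing portion of a fixed enumeration of $\V(\Gamma)$. The limit $\Delta$ is then a retract on which every finite homomorphism between induced subgraphs is witnessed as a partial embedding, so $\Delta$ is a core. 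This is the step I expect to be the main obstacle: one must arrange fairness so that every vertex of $\Gamma$ is eventually stabilized, every non-embedding homomorphism is eventually addressed, and the limit is a well-defined retraction rather than a collapse that is too drastic.

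Next, I would show $\Delta$ is HH and homogeneous. For HH: any homomorphism $\varphi : A \to B$ between finite induced subgraphs of $\Delta$ extends, by HH of $\Gamma$, to $\hat\varphi \in \End(\Gamma)$; since the retraction $r : \Gamma \to \Delta$ fixes $\Delta$ pointwise, the restriction of $r \circ \hat\varphi$ to $\Delta$ is an endomorphism of $\Delta$ extending $\varphi$. For homogeneity I would verify the back-and-forth extension property: given a partial isomorphism $\varphi : A \to B$ in $\Delta$ and a vertex $x \in V(\Delta)$, both $\varphi$ and $\varphi^{-1}$ extend via HH of $\Delta$ to endomorphisms of $\Delta$, and these are embeddings because $\Delta$ is a core; their restrictions to $A \cup \{x\}$ and $B \cup \{\varphi(x)\}$ supply the forth and back steps for a standard back-and-forth construction over an enumeration of $V(\Delta)$, yielding the required automorphism extending $\varphi$.

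Finally, for uniqueness, let $\Delta_1, \Delta_2$ be two HH cores of $\Gamma$ with retractions $r_1, r_2$. The restrictions $r_2\restr\VDelta_1 : \Delta_1 \to \Delta_2$ and $r_1\restr\VDelta_2 : \Delta_2 \to \Delta_1$ are homomorphisms whose round-trip compositions are endomorphisms of $\Delta_1$ and $\Delta_2$, hence embeddings by the core property; this forces both restrictions to be embeddings, so $\Age(\Delta_1) = \Age(\Delta_2)$. As both are countable homogeneous oriented graphs with the same age, \Fraisse's theorem delivers $\Delta_1 \cong \Delta_2$, completing uniqueness.
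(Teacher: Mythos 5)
Your plan diverges completely from the paper's proof, which is a two-line reduction: the paper cites \cite[Corollary 6.7]{PecPec16b} (every countable \emph{weakly oligomorphic} HH structure has a unique homogeneous HH core) and then observes that countable HH oriented graphs are weakly oligomorphic, by \cite[Proposition 2.3]{MasPec11} together with the finiteness of the signature. The second half of your direct construction is in fact correct, \emph{granted} a retract $\Delta$ of $\Gamma$ that is a core: composing the global extension $\hat\varphi$ with the retraction does show $\Delta$ is HH; the back-and-forth argument using that every endomorphism of an HH core is an embedding does yield homogeneity (this is essentially the weak-homogeneity argument the paper itself uses in Proposition~\ref{CyclicTransitive}); and your uniqueness step is sound --- if $g\colon\Delta_1\to\Delta_2$ and $f\colon\Delta_2\to\Delta_1$ have $f\circ g$ an embedding, then $g$ is injective and also reflects arcs (if $g(x)\to g(y)$ then $f(g(x))\to f(g(y))$, hence $x\to y$), so both restrictions are embeddings, the ages agree, and \Fraisse's theorem finishes.

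The genuine gap is exactly the step you flag and do not fill: the existence of the core retract. Concretely: (i) the image of the endomorphism extending $\varphi$ is not in general a retract --- the extension need not be idempotent, and iterating it need not converge pointwise on an infinite structure; (ii) the intersection of a descending chain of retracts need not be a retract of $\Gamma$, and nothing in your scheme prevents a vertex from being displaced at infinitely many stages; (iii) the ``fairness'' bookkeeping conflicts with itself, since extending $\varphi$ while ``fixing a growing portion'' of $\VGamma$ would require applying homomorphism homogeneity to $\varphi\cup\mathrm{id}_F$ for a frozen finite set $F$, and this union need not be a local homomorphism. Most fundamentally, countable structures in general (even countable digraphs) need not possess cores at all, so no amount of enumeration and dovetailing can succeed without a finiteness input; the indispensable ingredient is weak oligomorphy, which holds automatically here because oriented graphs have finite signature and $\Gamma$ is HH, and which is what makes the limit construction behind \cite[Corollary 6.7]{PecPec16b} converge. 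Your proposal never invokes it, so the existence claim --- which is the entire content of the cited result --- remains unproven in your argument.
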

\begin{proof}
    First note that oriented graphs are special cases of relational structures with one binary relation. 

    \cite[Corollary 6.7]{PecPec16b} says that the claim of the proposition holds for all countable weakly oligomorphic, homomorphism homogeneous relational structures. 

    It remains to observe that countable homomorphism homogeneous oriented graphs are weakly oligomorphic. However, this follows from \cite[Proposition 2.3]{MasPec11} and the fact that oriented graphs, considered as relational structures, have a finite signature.  
\end{proof}

The strategy of classifying weakly connected homomorphism homogeneous oriented graphs  is now clear, and can be divided in two steps: 
\begin{description}
	\item[Step 1] Identify all weakly connected homogeneous homomorphism homogeneous oriented graphs that are cores.
	\item[Step 2] For each of them classify weakly connected homomorphism homogeneous oriented graphs with a given core. 
\end{description}

Concerning Step 1, there is a straightforward way of identifying cores among oriented graphs that are both homogeneous and homomorphism homogeneous, based on the classification of homogeneous tournaments by Lachlan  and Woodrow in \cite{Lac84,Woo77}:
\begin{lemma}
    Homomorphism homogeneous oriented graphs that are cores must be tournaments.
\end{lemma}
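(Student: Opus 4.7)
The plan is to argue by contradiction: assume $\Gamma$ is an HH oriented graph that is a core, but is not a tournament, and derive a violation of the core property. Since $\Gamma$ fails to be a tournament, there must exist two distinct vertices $x,y \in \V(\Gamma)$ with $x \not\sim y$; the task reduces to producing from this pair an endomorphism of $\Gamma$ that fails to be a self-embedding.

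The key observation is that the substructure induced on $\{x,y\}$ carries no arcs (and, because we are in the loopless setting, no loops either), and likewise the substructure induced on $\{x\}$ is just a single vertex with no arcs. Therefore the map $f \colon \{x,y\} \to \{x\}$ defined by $f(x) = f(y) = x$ is trivially a homomorphism between finite induced subgraphs of $\Gamma$: there are simply no adjacencies to preserve. This is the step I would write out explicitly, since it relies on the fact that oriented graphs have no loops, which is precisely the convention fixed earlier in the paper.

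Applying homomorphism homogeneity to $f$ then yields an endomorphism $h \in \End(\Gamma)$ with $h(x) = h(y) = x$. But such an $h$ identifies two distinct vertices, hence is not injective, hence is not an embedding of $\Gamma$ into itself. This contradicts the hypothesis that $\Gamma$ is a core, completing the argument.

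I do not expect any genuine obstacle here: the proof is a one-line unfolding of the two definitions together with the loopless convention. The only subtle point worth flagging in the write-up is the role of loop-freeness — if loops were permitted, the single-vertex subgraph $\{x\}$ could carry a loop and the constant-$x$ map would no longer automatically be a homomorphism, so the argument as stated is specific to the oriented-graph (loopless) setting considered in this paper.
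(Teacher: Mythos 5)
Your proposal is correct and is essentially identical to the paper's proof: both collapse a non-adjacent pair onto a single vertex, extend by homomorphism homogeneity, and observe that the resulting non-injective endomorphism contradicts the core property. Your explicit remark on the role of loop-freeness is a fine addition but does not change the argument.
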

\begin{proof}
    Let $\Gamma$ be a homomorphism homogeneous oriented graph that is a core. Suppose that $\Gamma$ is not a tournament. Then there exist $u,v\in \VGamma$ that are not connected by an arc. However, then the mapping $f\colon\{u,v\}\to \VGamma$ that maps both $u$ and $v$  to $v$ is a local homomorphism of $\Gamma$ (in general a \emph{local homomorphism} of $\Gamma$ is a homomorphism from a finite subgraph of $\Gamma$ to $\Gamma$). As $\Gamma$ is homomorphism homogeneous, $f$ extends to an endomorphism of $\Gamma$ that is not a self-embedding --- a contradiction. 
\end{proof}

It is easy to see that every tournament is a core. Moreover,  a countable tournament is homomorphism homogeneous if and only if it is homogeneous. 
According to \cite{Lac84} all countable homogeneous tournaments are isomorphic to one of the tournaments from the following list:
\begin{itemize}
    \item[$I_1$:] the tournament that has just one vertex and no arc,
    \item[$C_3$:] the oriented cycle of length $3$,
    \item[$(\bQ,<)$:] the rational numbers with the strict order,
    \item[$S(2)$:]\label{s2def} the countable circular tournament.  It is obtained by choosing a countable dense subset $S$ of the unit circle in such a way that no two points of $S$ are antipodal. For any two points $x,y\in S$ an arc is  drawn from $x$ to $y$ whenever the angle traversed starting from $x$ and going counter-clock wise to $y$ is less than $\pi$,
    \item[$T^\infty$:] the countable universal homogeneous tournament --- the \Fraisse limit of the class of all finite tournaments.
\end{itemize}
\begin{remark}
    In the description of the classification of homogeneous tournaments above the term ``\Fraisse limit'' appears.  This is because it was shown by \Fraisse (see \cite{Fra53}) that countable homogeneous structures are uniquely determined (up to isomorphism) by their age. Therefore the age of a countable homogeneous structure is usually called a \emph{\Fraisse class} and the unique countable homogeneous structure whose age is a given \Fraisse class $\mathcal{C}$ is called the \emph{\Fraisse limit} of $\mathcal{C}$ (for a modern statement of this result, see \cite{Hod97}). It should be mentioned that the situation for homomorphism homogeneous structures is quite different. It is possible that two non-isomorphic countable homomorphism homogeneous structures have the same age. For example, every countably infinite  chain $(C,\le)$ is homomorphism homogeneous (see \cite[Theorem 4.5]{Mas07}) while its age consists of all finite chains. Thus, there are continuum many pairwise non-isomorphic countably infinite  chains all sharing the same age. However,  any two homomorphism homogeneous structures with the same age are homomorphism-equivalent (see \cite[Lemma 3.6]{PecPec16b}).
\end{remark}
\begin{corollary}\label{poss_cores}
	The only possible cores of homomorphism homogeneous oriented graphs are
	\begin{equation*} 
 I_1, C_3, (\bQ, <), S(2), \text{\,and } T^{\infty}.
 \end{equation*}
\end{corollary}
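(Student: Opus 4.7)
The plan is to derive the corollary as a synthesis of the two results that immediately precede it, combined with the Lachlan--Woodrow classification of countable homogeneous tournaments that has just been recalled. Given a countable homomorphism homogeneous oriented graph $\Gamma$, the preceding proposition produces a core $\Delta$ of $\Gamma$ which is itself homomorphism homogeneous, and in fact homogeneous; being an induced subgraph of a countable $\Gamma$, the core $\Delta$ is also countable. The preceding lemma, applied to $\Delta$ in place of $\Gamma$, then forces $\Delta$ to be a tournament, because $\Delta$ is HH and a core. Hence $\Delta$ is a countable homogeneous tournament, and the Lachlan--Woodrow list recalled just above exhausts the possibilities, leaving exactly $I_1$, $C_3$, $(\bQ,<)$, $S(2)$, and $T^\infty$.

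The argument is essentially a chain of implications, so no step presents a genuine obstacle; the only delicate points are bookkeeping. First, one must invoke the proposition to extract the specifically HH core, not merely any core, since the lemma requires HH of $\Delta$ (not only that $\Delta$ be a core) at the crucial step where one argues that a non-arc $u\not\sim v$ would yield a non-embedding endomorphism. Second, one should note that countability of $\Delta$ is inherited from $\Gamma$, so that the Lachlan--Woodrow classification really applies. With these two observations in place the corollary follows immediately; all the genuine classification work is postponed to Step 2 of the strategy outlined after the proposition, where for each of the five possible cores one must classify the countable HH oriented graphs admitting it.
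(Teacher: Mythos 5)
Your proposal is correct and follows essentially the same route as the paper: the corollary there is exactly the synthesis of the proposition on the existence of a unique homogeneous HH core, the lemma that HH cores must be tournaments, and the Lachlan--Woodrow list, with the bookkeeping points (HH-ness of the core, countability) handled just as you describe. Nothing further is needed.
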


Now we are ready to proceed with Step 2 of our strategy. Once the possible cores are identified, we move to the classification of weakly connected homomorphism homogeneous oriented graphs with a given core. It is an easy observation that $I_1$ and $(\bQ, <)$ are the cores of acyclic weakly connected homomorphism homogeneous oriented graphs, and this defines our classification strategy --- we conduct our further considerations in two directions, analyzing separately acyclic oriented graphs, and those that contain at least one cycle.

\subsection*{Acyclic homomorphism homogeneous oriented graphs} 
The crucial observation that enables the classification in this case is the following result:
\begin{proposition}\label{AcyclicTransitive}
	If $\Gamma$ is an acyclic homomorphism homogeneous oriented graph, then its arc relation is transitive.
\end{proposition}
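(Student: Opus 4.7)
The plan is to argue by contradiction. Assume $x \to y$ and $y \to z$; I want to show $x \to z$. First I will use acyclicity to cut down the possibilities for the pair $(x, z)$: the arc $z \to x$ together with $x \to y$ and $y \to z$ would produce a directed $3$-cycle, so acyclicity of $\Gamma$ forces either $x \to z$ (the desired conclusion) or the complete absence of an arc between $x$ and $z$ (written $x \not\sim z$ in the paper's notation). The bulk of the proof is therefore to rule out $x \not\sim z$.

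To dispose of that case, the idea is to find a local homomorphism whose endomorphism-extension is forced to violate the asymmetry of the arc relation. A natural candidate is the map $g\colon \{x, z\} \to \VGamma$ sending both $x$ and $z$ to $y$. Since the subgraph induced on $\{x, z\}$ carries no arc, $g$ is vacuously a homomorphism between finite induced subgraphs of $\Gamma$, and homomorphism homogeneity produces an endomorphism $\hat{g} \in \End(\Gamma)$ extending $g$. Writing $u := \hat{g}(y)$, applying $\hat{g}$ to the arc $x \to y$ yields $y \to u$, and applying it to $y \to z$ yields $u \to y$. These two arcs together contradict the asymmetry defining an oriented graph (and in the degenerate subcase $u = y$ they contradict the no-loops convention), which is the contradiction I need.

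There is no real obstacle here: HH is invoked once, acyclicity is used once, and the whole subtlety lies in spotting the right collapsing map. If I were to flag a pitfall, it would be forgetting that $\hat{g}$ need not be injective, so $u$ should be treated merely as some vertex of $\Gamma$ without assuming a priori that $u \ne y$; happily the $u = y$ case is already excluded by the ban on loops, so the argument closes without a separate case analysis.
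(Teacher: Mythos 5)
Your proposal is correct and takes essentially the same approach as the paper: the paper factors your key step out as Lemma~\ref{NoInducedChain} (collapsing the non-adjacent pair $\{x,z\}$ to a single vertex --- there to $x$ rather than to $y$ --- and deriving a violation of asymmetry from the image of the middle vertex), and then, exactly as you do, uses acyclicity to force the resulting arc to be $x\to z$. The choice of target vertex for the collapsing map and the inlining of the lemma are purely cosmetic differences.
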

The proof of this proposition is based on a general property of homomorphism homogeneous oriented graphs that was firstly realized by Coleman:
\begin{lemma}[{\cite[Corollary 5.12]{Col20}}]\label{NoInducedChain}
Let $\Gamma$ be a homomorphism homogeneous oriented graph. Then $\Gamma$ does not contain the configuration
\[
\begin{tikzpicture}[scale=\scf,baseline=0pt]
    \SetVertexCircle		
    \SetupArcStyle
	\SetVertexNoLabel
	\Vertex[x=1,y=0]{B}
	\Vertex[x=2,y=0]{C}
	\Vertex[x=0,y=0]{A}
    \Edges(A,B,C)
\end{tikzpicture}\,.
\]	
\end{lemma}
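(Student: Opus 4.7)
The plan is to prove this by contradiction using a single clever application of homomorphism homogeneity. Suppose $\Gamma$ does contain the forbidden configuration, so there exist vertices $a,b,c\in\VGamma$ with $a\to b$, $b\to c$, and $a\not\sim c$ (the last condition is what makes the induced substructure a genuine $P_3$, not a transitive triangle or a $3$-cycle).

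The key idea is to exploit the fact that $a$ and $c$ are non-adjacent: any mapping whose domain is $\{a,c\}$ is automatically a local homomorphism, because there is no arc to preserve. Specifically, I would consider the map $f\colon\{a,c\}\to\VGamma$ sending both $a$ and $c$ to $b$. This is a homomorphism between finite induced subgraphs, so by homomorphism homogeneity it extends to some $\hat f\in\End(\Gamma)$.

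Now I would examine the image $\hat f(b)=:x$. Applying $\hat f$ to the arc $a\to b$ forces $b=\hat f(a)\to\hat f(b)=x$, and applying it to the arc $b\to c$ forces $x=\hat f(b)\to\hat f(c)=b$. Thus simultaneously $b\to x$ and $x\to b$, contradicting the asymmetry of the arc relation of an oriented graph (and also ruling out $x=b$, since loops are excluded).

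There is really no main obstacle here, as the whole argument collapses the three-vertex configuration to a putative $2$-cycle through the single vertex $b$. The only thing worth highlighting in the write-up is why $f$ is a legitimate local homomorphism in the first place, namely the non-adjacency $a\not\sim c$ that is built into the definition of the configuration as an \emph{induced} subgraph.
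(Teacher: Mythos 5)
Your proof is correct and takes essentially the same approach as the paper: both arguments collapse the two non-adjacent endpoints of the induced path to a single vertex via a local homomorphism, extend by homomorphism homogeneity, and observe that the image of the middle vertex yields a $2$-cycle (or loop), contradicting the asymmetry of the arc relation. The only cosmetic difference is that the paper sends both endpoints to the source endpoint of the path rather than to the middle vertex $b$, which changes nothing in substance.
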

\begin{proof}
Suppose that $\Gamma$ is a homomorphism homogeneous oriented graph that contains 
\[
\begin{tikzpicture}[scale=\scf,baseline=0pt]
    \GraphInit[vstyle=Classic]
    \SetUpVertex[Lpos=-90]
    \SetVertexCircle		
    \SetVertexLabel
    \SetVertexMath
    \SetupArcStyle
   	\Vertex[x=1,y=0,L={y}]{y}
   	\Vertex[x=2,y=0,L={z}]{z}
   	\Vertex[x=0,y=0,L={x}]{x}
    \Edges(x,y,z)
\end{tikzpicture}
\]
    as an induced subgraph. Consider the local homomorphism $f\colon \{x,z\}\to \VGamma$ of $\Gamma$ given by $f:=\begin{psmallmatrix}  x & z \\ x & x\end{psmallmatrix}$. Since $\Gamma$ is homomorphism homogeneous, $f$ can be extended to a local homomorphism $\hat{f}\colon \{x,y,z\}\to \VGamma$. Observe that from $x\to y\to z$ it follows $\hat{f}(x)\to \hat{f}(y)\to \hat{f}(z)$, i.e.\ $x\to \hat{f}(y)\to x$, so either $\hat{f}(y)=x$ or both $x\to \hat{f}(x)$ and $\hat{f}(x)\to x$. In both situations we arrive at a contradiction with the asymmetry of $\EGamma$.
    Hence, $\Gamma$ has no induced subgraphs of the given shape.		
\end{proof}

\begin{proof}[Proof of Proposition~\ref{AcyclicTransitive}]
Suppose that $\Gamma$ is a homomorphism homogeneous acyclic oriented graph, and let $x,y,z\in \VGamma$ be such that $x\to y\to z$. By Lemma~\ref{NoInducedChain}, there is an oriented edge between $x$ and $z$. Since $\Gamma$ is acyclic, it follows that $x\to z$. Hence, $\EGamma$ is transitive. 
\end{proof}
	
It is clear that transitive oriented graphs can be viewed as strict posets, so we obtain:
\begin{corollary}
	Every acyclic homomorphism homogeneous oriented graph is a strict poset.
\end{corollary}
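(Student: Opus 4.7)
The plan is to observe that this corollary is essentially an unpacking of definitions once Proposition~\ref{AcyclicTransitive} is in hand. Recall that a strict poset is a set equipped with a binary relation that is irreflexive, asymmetric, and transitive, so I need to check these three properties for the arc relation $\EGamma$ of an acyclic homomorphism homogeneous oriented graph~$\Gamma$.

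First I would invoke the definition of an oriented graph as laid out in the introduction: the arc relation $\EGamma \subseteq \VGamma^2$ is asymmetric, which in particular means $(x,x) \notin \EGamma$ for any $x$ (otherwise asymmetry would be violated by taking $x=y$). Hence $\EGamma$ is automatically irreflexive and asymmetric, so two of the three poset axioms come for free from the setting. Next I would appeal directly to Proposition~\ref{AcyclicTransitive}, which supplies transitivity of $\EGamma$ under the hypothesis that $\Gamma$ is acyclic and homomorphism homogeneous. Combining these observations, $\EGamma$ is a strict partial order on $\VGamma$, which is precisely what it means for $\Gamma$ to be a strict poset.

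There is essentially no obstacle here; the work has already been done in Lemma~\ref{NoInducedChain} and Proposition~\ref{AcyclicTransitive}. The only very minor thing to be careful about is making sure the reader sees that ``asymmetric'' in the definition of oriented graph already entails irreflexivity, so one does not have to add the no-loops assumption as a separate hypothesis. A one- or two-sentence proof should therefore suffice.
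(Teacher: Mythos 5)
Your proposal is correct and matches the paper's reasoning exactly: the paper states only that ``transitive oriented graphs can be viewed as strict posets,'' relying on Proposition~\ref{AcyclicTransitive} for transitivity and on the asymmetry (hence irreflexivity) built into the definition of an oriented graph, precisely as you do. Your explicit note that asymmetry entails irreflexivity is a harmless unpacking of what the paper leaves implicit.
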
 
The homomorphism homogeneous strict posets were completely classified by Cameron and Lockett \cite{CamLoc10}, and this enables us to give the classification in this case:

\begin{proposition}[{\cite[Proposition 15]{CamLoc10}}]\label{acyclicHH} Weakly connected acyclic homomorphism homogeneous oriented graphs are
\begin{enumerate}
	\item $I_1$, 
	\item $(\bQ, <)$,
	\item trees with no minimal elements such that no finite subset of vertices has a maximal lower bound,
	\item dual trees with no maximal elements such that no finite subset of vertices has a minimal upper bound,	
	\item posets such that: \begin{itemize}
 \item every finite subset of vertices is bounded from above and from below
 \item no finite subset of vertices has a maximal lower bound or a minimal upper bound	
 \item no $X_4$-set has a midpoint,
 \end{itemize}
 \item all extensions of the countable universal homogeneous strict  poset.	
\end{enumerate}
\end{proposition}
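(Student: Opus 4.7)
The plan is to invoke the Cameron--Lockett classification of countable homomorphism homogeneous strict posets and restrict it to the weakly connected setting. The preceding corollary tells us that every acyclic homomorphism homogeneous oriented graph is a strict poset, so the arc relation $\to$ coincides with the strict order $<$. Moreover, weak connectedness of $\Gamma$ as an oriented graph is the same as connectedness of the comparability graph of the induced poset. Hence the task reduces to reading off which items of the Cameron--Lockett list are weakly connected.

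The structural classification itself would naturally proceed by a case analysis driven by extremal/boundedness properties of the poset. First one singles out the degenerate weakly connected cases: a one-element poset yields $I_1$, and the case where any two elements are comparable (a chain) is handled by a direct application of the preceding material on homogeneous tournaments, giving $(\bQ,<)$. In the genuinely branching case, one would separate according to whether the poset has minimal elements, maximal elements, both, or neither, and in each case invoke Lemma~\ref{NoInducedChain} (which already forbids the induced $3$-chain in the oriented graph sense) together with extension of local homomorphisms to rule out finite bounds on one side. This gives the tree and dual-tree cases (items (3) and (4)) and, when both conditions hold simultaneously, leads to item (5). The remaining case, where no such forbidden configuration is present, is item~(6): all extensions of the universal homogeneous strict poset.

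The technical heart, and the step I expect to be the main obstacle, is the $X_4$-condition in item~(5): one has to argue that no four-element subset forming a diamond (two elements with a common upper and a common lower bound) can admit a midpoint. The argument would construct a local homomorphism that collapses a pair of would-be midpoints to a single vertex; extending it through homomorphism homogeneity then forces a pair of arcs violating either asymmetry or acyclicity, analogously to the proof of Lemma~\ref{NoInducedChain}. Once this is established in each branch of the case analysis, one checks that the six types are pairwise non-isomorphic and exhaustive, completing the classification.

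Given that the statement is cited directly from \cite[Proposition~15]{CamLoc10}, in practice the proof in the paper will simply observe that the list there, intersected with the weakly connected strict posets, reproduces items (1)--(6), and refer the reader to the original source for the detailed case analysis.
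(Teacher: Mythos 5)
Your proposal takes essentially the same route as the paper: the paper derives from Proposition~\ref{AcyclicTransitive} and its corollary that acyclic homomorphism homogeneous oriented graphs are strict posets, and then simply imports the Cameron--Lockett classification \cite[Proposition 15]{CamLoc10} restricted to the weakly connected case, exactly as your first and final paragraphs describe. Your middle paragraphs reconstructing the case analysis are unnecessary for the paper's purposes (and note in passing that homomorphism homogeneity does \emph{not} force every $X_4$-set to lack a midpoint---posets of class (6) contain $X_4$-sets with midpoints---so the no-midpoint condition is a defining feature of class (5) only, not a general consequence as your sketch suggests), but since the paper offers no in-house proof, this does not affect the correctness of your approach.
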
	
\begin{remark}
A poset $(P,<)$ is called a \emph{tree} if  for every $x\in P$ we have that the set of elements in $P$ below $x$ forms a chain. Moreover, $(P,<)$ is called a \emph{dual tree} if $(P,>)$ is a tree. Finally, an \emph{$X_4$-set} in $(P,<)$ is a $4$-element subset of $P$ that induces a subposet of the shape
\[
   	\begin{tikzpicture}[scale=\scf,baseline=-2pt]
        \SetVertexCircle		
		\SetVertexNoLabel
   		\Vertex[x=0,y=0]{C1}
   		\Vertex[x=1,y=0]{C2}
   		\Vertex[x=0,y=1]{C3}
   		\Vertex[x=1,y=1]{C4}
        \Edge(C1)(C3)
        \Edge(C2)(C3)
        \Edge(C1)(C4)
        \Edge(C2)(C4)
    \end{tikzpicture}\,.
\]
We say that an $X_4$-set has a \emph{midpoint} if there is a fifth element that together with the $X_4$-set induces a subposet of the shape: 
\[\label{X5}
   	\begin{tikzpicture}[scale=\scf,baseline=-2pt]
        \SetVertexCircle		
		\SetVertexNoLabel
   		\Vertex[x=0,y=0]{B1}
   		\Vertex[x=1,y=0]{B2}
   		\Vertex[x=0.5,y=0.5]{A}
   		\Vertex[x=0,y=1]{B3}
   		\Vertex[x=1,y=1]{B4}
		\AddVertexColor{black}{A}
        \Edge(B1)(A)
        \Edge(B2)(A)
        \Edge(A)(B3)
        \Edge(A)(B4)
        \node at (0.5,-0.5) {$\bX_5$};
    \end{tikzpicture}\,.
\]
Recall also that the countable universal homogeneous strict poset is the \Fraisse limit of the class of all finite posets. Finally, a poset $(A,<_2)$ is called an \emph{extension} of a poset $(A,<_1)$ if $<_1$ is a subset of $<_2$.  
\end{remark}

\subsection*{Homomorphism homogeneous oriented graphs that contain cycles.} Again, there is an easy but important observation that directs our strategy for the classification.
	
\begin{lemma}\label{alwaysC3}
Every induced oriented cycle in a homomorphism homogeneous oriented graph is isomorphic to $C_3$.
\end{lemma}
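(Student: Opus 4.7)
The plan is to use Lemma~\ref{NoInducedChain} as the main tool, together with the basic combinatorial features of oriented graphs. An induced oriented cycle means an induced subgraph on vertices $v_0, v_1, \ldots, v_{n-1}$ whose only arcs are $v_0 \to v_1 \to v_2 \to \cdots \to v_{n-1} \to v_0$; we want to show $n=3$.

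First I would dispose of the small cases trivially: since $\Gamma$ has no loops we need $n \geq 2$, and since $\EGamma$ is asymmetric we cannot have both $v_0 \to v_1$ and $v_1 \to v_0$, which rules out $n=2$. So $n \geq 3$.

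For the main case $n \geq 4$, I would simply look at three consecutive vertices $v_0, v_1, v_2$ on the cycle. They satisfy $v_0 \to v_1 \to v_2$, and because the cycle is induced of length at least $4$, there is no arc between $v_0$ and $v_2$. This is exactly the forbidden configuration ruled out by Lemma~\ref{NoInducedChain}, contradicting the assumption that $\Gamma$ is homomorphism homogeneous. Hence $n = 3$, which gives $C \cong C_3$.

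There is essentially no obstacle here: the statement follows in a direct application of the previously proved Coleman lemma, and the only thing to be careful about is clarifying what an induced oriented cycle is in an oriented graph (no loops, no $2$-cycles, so $n\ge 3$ is automatic) and noting that the induced hypothesis is what supplies the non-adjacency of $v_0$ and $v_2$ needed to invoke Lemma~\ref{NoInducedChain}.
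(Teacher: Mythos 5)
Your proof is correct and follows essentially the same route as the paper: the paper likewise observes that an induced oriented cycle of length greater than $3$ would contain the induced oriented path of length $2$ forbidden by Lemma~\ref{NoInducedChain}, forcing every induced cycle to be $C_3$. Your additional remarks on the small cases ($n=2$ excluded by asymmetry, loops excluded by definition) are fine but are left implicit in the paper.
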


\begin{proof}
	Let $\Gamma$ be a homomorphism homogeneous oriented graph. From Lemma~\ref{NoInducedChain} it follows that $\Gamma$ contains no induced oriented path of length 2, so $\Gamma$ cannot contain an induced oriented cycle of length greater than 3. Hence, every induced oriented cycle in $\Gamma$ has to be isomorphic to $C_3$.
\end{proof}

Since homomorphism homogeneous tournaments are just the homogeneous tournaments that were listed in front of Corollary~\ref{poss_cores}, we turn our attention to homomorphism homogeneous oriented graphs that are not tournaments, but contain cycles. The possible cores of such graphs are $C_3$, $S(2)$, or $T^\infty$. As a first step we would like to understand the non-arc relation $\not\sim$ in these cases. To this end we analyse   subconfigurations on three and four vertices that may or may not occur in such graphs, with the goal of showing that $\not\sim$ is an  equivalence relation.


\begin{lemma}[3-vertex configurations]\label{3conf}
	Let $\Gamma$ be a weakly connected homomorphism homogeneous oriented graph that is not a tournament. Then $\Age(\Gamma)$ contains at least one of the following configurations:
	\[
	\begin{tikzpicture}[scale=\scf,baseline=-8pt]
        \GraphInit[vstyle=Classic]
        \SetVertexNoLabel
        \SetVertexCircle
        \SetupArcStyle
   		\Vertex[a=0*120-30,d=0.5]{v}
   		\Vertex[a=1*120-30,d=0.5]{w}
   		\Vertex[a=2*120-30,d=0.5]{u}
        \node at (0,-0.7) {$\bL_1$};
        \Edge(u)(w)
        \Edge(v)(w)
  \end{tikzpicture} \quad\text{or}\quad
  \begin{tikzpicture}[scale=\scf,baseline=-8pt]
        \GraphInit[vstyle=Classic]
        \SetVertexNoLabel
        \SetVertexCircle
        \SetupArcStyle
   		\Vertex[a=0*120-30,d=0.5]{v}
   		\Vertex[a=1*120-30,d=0.5]{w}
   		\Vertex[a=2*120-30,d=0.5]{u}
        \node at (0,-0.7) {$\bL_2$};
        \Edge(w)(u)
        \Edge(w)(v)
  \end{tikzpicture}\,.
\]	  
\end{lemma}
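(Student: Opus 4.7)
The plan is to find two non-adjacent vertices that share a common semi-neighbor, and then appeal to Lemma~\ref{NoInducedChain} to rule out the remaining possibilities.

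First, I would exploit that $\Gamma$ is not a tournament together with weak connectedness to guarantee existence of a short semi-walk between non-adjacent vertices. Concretely: since $\Gamma$ is not a tournament, the relation $\not\sim$ is non-empty on $\VGamma$, and since $\Gamma$ is weakly connected, any pair $(x,y)$ with $x\not\sim y$ is joined by a semi-walk. Let $k\ge 2$ be the minimum length over all semi-walks between non-adjacent pairs in $\Gamma$, and fix a realizing semi-walk $z_0,z_1,\dots,z_k$ with $z_0\not\sim z_k$.

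Next, I would show that necessarily $k=2$ by a standard shortcut argument. Suppose for contradiction $k\ge 3$. Then $z_0,z_1,z_2$ is itself a semi-walk of length $2$, so by the minimality of $k$ we must have $z_0\sim z_2$ (otherwise $k\le 2$). But then $z_0,z_2,z_3,\dots,z_k$ is a semi-walk of length $k-1$ between $z_0\not\sim z_k$, contradicting the minimality of $k$.

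Finally, with $k=2$ in hand, the three vertices $z_0,z_1,z_2$ induce a subgraph with $z_0\sim z_1$, $z_1\sim z_2$ and $z_0\not\sim z_2$. Up to isomorphism the only induced configurations on three such vertices are the directed path of length $2$ (in either orientation), the ``sink'' configuration $\bL_1$, and the ``source'' configuration $\bL_2$. By Lemma~\ref{NoInducedChain} the directed path is excluded, so the induced subgraph on $\{z_0,z_1,z_2\}$ is isomorphic to $\bL_1$ or $\bL_2$, which lies in $\Age(\Gamma)$ as required. The only subtle point is to set up the minimality correctly so that the shortcut argument actually produces a contradiction, but once the minimal $k$ is chosen over all non-adjacent pairs (and not just for a fixed one), the argument is straightforward.
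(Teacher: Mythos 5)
Your proof is correct and follows essentially the same route as the paper: both arguments take a shortest semi-walk witnessing non-adjacency, use minimality to extract three vertices $u\sim v\sim w$ with $u\not\sim w$, and then eliminate the two oriented-path configurations via Lemma~\ref{NoInducedChain}. The only difference is bookkeeping --- you minimize the semi-walk length globally over all non-adjacent pairs, whereas the paper fixes one non-adjacent pair and takes a shortest path between them, splitting into the cases $k=1$ and $k\ge 2$ --- but the underlying idea is identical.
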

\begin{proof}
	Take $x,y\in \VGamma$ with $x\not\sim y$. Since $\Gamma$ is weakly connected, there exists a non-oriented path between $x$ and $y$. Let $x\sim z_1\sim\cdots\sim z_k\sim y$ be such a path of the shortest length. If $k=1$, then we get $x\sim z_1\sim y$. On the other hand, if $k\geq 2$, then we get $x\sim z_1\sim z_2$, and $x\not\sim z_2$, since the observed path is the shortest one. In both cases, we find $u,v,w\in \VGamma$ such that $u\sim v\sim w$, and $u\not\sim w$, so the possible induced subgraphs are $u\to v\gets w$, $u\gets v\to w$, $ u\gets v \gets w$, and $u\to v\to w$. The last two can be disqualified by Lemma~\ref{NoInducedChain}.  
\end{proof}
\begin{remark}
	Lemma~\ref{3conf} implies that the graph obtained from a homomorphism homogeneous oriented graph by forgetting the orientation of its arcs has diameter at most $2$. This mirrors  \cite[Proposition 1.1(c)]{CamNes06} where it was shown that homomorphism homogeneous graphs have diameter at most $2$. 
\end{remark}

\begin{lemma}[4-vertex configurations]\label{4conf}
	Let $\Gamma$ be a homomorphism homogeneous oriented graph  that contains $C_3$, and that is not a tournament. Then $\Gamma$ contains the following configuration: 
	\[
    \begin{tikzpicture}[scale=\scf,baseline=-16pt]
        \GraphInit[vstyle=Classic]
        \SetVertexNoLabel
        \SetVertexCircle
        \SetupArcStyle
   		\Vertex[x=0,y=-0.5]{y}
   		\Vertex[x=0.5,y=0]{z}
   		\Vertex[x=-0.5,y=0]{u}
   		\Vertex[x=0,y=0.5]{x}
        \Edges(z,u,y,z)
        \Edges(u,x,z)
        \node at (0,-1) {$\bA$};
    \end{tikzpicture}\,.
    \]
\end{lemma}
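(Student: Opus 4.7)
The configuration $\bA$ decomposes as an $\bL_1$-configuration on $\{x, y, z\}$ (with common out-neighbor $z$ and $x \not\sim y$) superposed with an $\bL_2$-configuration on $\{u, x, y\}$ (with common in-neighbor $u$ and $x \not\sim y$), glued along the non-adjacent pair $\{x, y\}$ and closed by the arc $z \to u$. So it suffices to produce in $\Gamma$ a non-adjacent pair of vertices sharing both a common in-neighbor and a common out-neighbor, together with the appropriate linking arc.

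The plan begins with Lemma~\ref{3conf}, which is applicable since the section considers weakly connected oriented graphs: it supplies an instance of $\bL_1$ or $\bL_2$ in $\Age(\Gamma)$. Reversing all arcs of $\Gamma$ is a symmetry of the hypotheses (including the presence of $C_3$) and of the conclusion $\bA$, so I may treat just the $\bL_1$ case and fix witnesses $a, b, c \in \VGamma$ with $a \to c$, $b \to c$, $a \not\sim b$, together with a fixed 3-cycle $p \to q \to r \to p$. I would then transport the non-edge $a \not\sim b$ to the vicinity of the $C_3$ via homomorphism homogeneity: consider a local homomorphism $g$ defined on $\{a, b, c\}$ that sends $c \mapsto q$, with the images of $a$ and $b$ chosen among the in-neighbors of $q$, and extend $g$ to an endomorphism $\hat{g}$. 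Applying Lemma~\ref{NoInducedChain} to the directed paths $\hat{g}(a) \to q \to r$ and $\hat{g}(b) \to q \to r$ forces $\hat{g}(a) \sim r$ and $\hat{g}(b) \sim r$; a short case analysis on the orientations, using the asymmetry of $\EGamma$ together with a further application of homomorphism homogeneity where needed, pins down $r \to \hat{g}(a)$ and $r \to \hat{g}(b)$. Setting $z := q$, $u := r$, $x := \hat{g}(a)$, $y := \hat{g}(b)$ then supplies all five arcs of $\bA$.

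The key obstacle is the non-adjacency $x \not\sim y$: homomorphism homogeneity does not preserve non-edges, so the extension $\hat{g}$ may collapse $a \not\sim b$ to an edge or even identify $\hat{g}(a)$ with $\hat{g}(b)$. To force the non-adjacency to survive, I would refine the choice of the local homomorphism so that $\hat{g}(a)$ and $\hat{g}(b)$ are constrained to land on prescribed distinct in-neighbors of $q$ whose non-adjacency is already established — such a pair is obtained by iterating Lemma~\ref{3conf} inside the in-neighborhood of $q$, or alternatively by rerouting through a local homomorphism whose domain already contains a non-adjacent pair positioned in the in-neighborhood of $c$. This non-edge preservation step is where the main technical difficulty lies; I expect the final argument to be a concise case analysis built on Lemma~\ref{NoInducedChain} and the asymmetry of $\EGamma$, which jointly rule out every configuration on the relevant four vertices except $\bA$ itself.
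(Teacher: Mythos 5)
There is a genuine gap, and you have correctly located it yourself: the non-adjacency $\hat{g}(a)\not\sim\hat{g}(b)$ is never secured, and your proposed fix is circular. You suggest obtaining ``prescribed distinct in-neighbors of $q$ whose non-adjacency is already established \ldots by iterating Lemma~\ref{3conf} inside the in-neighborhood of $q$,'' but Lemma~\ref{3conf} only produces a non-adjacent pair with a common neighbour \emph{somewhere} in $\Gamma$, with no control over its location; a non-adjacent pair of in-neighbours of a vertex lying on a $3$-cycle is essentially the configuration $\bA$ minus one arc, i.e.\ most of what is to be proved. A second, independent flaw: the claim that a case analysis ``pins down $r\to\hat{g}(a)$ and $r\to\hat{g}(b)$'' is false as stated. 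From $\hat{g}(a)\to q\to r$, Lemma~\ref{NoInducedChain} yields only $\hat{g}(a)\sim r$, and the orientation $\hat{g}(a)\to r$ (a transitive triangle) cannot be excluded by asymmetry; ruling it out requires the very non-adjacency $\hat{g}(a)\not\sim\hat{g}(b)$ you have not secured, and even with it one only kills the mixed case, while the case $\hat{g}(a),\hat{g}(b)\to r$ forces a further shift around the cycle and a terminal case (both vertices dominating all of $p,q,r$) needing a separate argument.

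The paper's proof transports in the \emph{opposite} direction, which dissolves the obstacle entirely: instead of moving the non-edge towards the cycle, it moves the cycle onto the non-edge. Fixing a copy of $\bL_1$ on $\{x,y,z\}$ (with $x,y\to z$, $x\not\sim y$) and a copy of $C_3$ on $\{u,v,w\}$, it maps the cycle arc $v\to u$ onto the arc $y\to z$ of the $\bL_1$ (via $f=\begin{psmallmatrix} u & v\\ z & y\end{psmallmatrix}$) and extends; the extension supplies a \emph{new} vertex $\hat{f}(w)$ with $z\to\hat{f}(w)\to y$, while the pair $\{x,y\}$ never enters the image and its non-adjacency is automatically intact. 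The whole case analysis then concerns the single pair $\{x,\hat{f}(w)\}$: both $x\not\sim\hat{f}(w)$ and $x\to\hat{f}(w)$ produce an induced path of length $2$ with endpoint-pair $\{x,y\}$ or $\{x,\hat f(w)\}$, excluded by Lemma~\ref{NoInducedChain}, leaving $\hat{f}(w)\to x$ and hence $\bA$. Your symmetry reduction (arc reversal handles the $\bL_2$ case, since $\bA$ is self-dual and homomorphism homogeneity is reversal-invariant) is sound and matches the paper's ``analogously''; but the core of your argument would need to be replaced by this reversed-transport idea, since the step you defer is precisely the hard part and, as far as I can see, is not repairable along the route you sketch without in effect rediscovering the paper's construction.
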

\begin{proof}
	From Lemma~\ref{3conf} we have that $\Gamma$ contains $\bL_1$ or $\bL_2$.
Suppose that it contains $\bL_1$  and consider the following two induced subgraphs of $\Gamma$:
 \[
    \begin{tikzpicture}[scale=\scf,baseline=-8pt]
        \GraphInit[vstyle=Classic]
        \SetVertexCircle
        \SetupArcStyle
        \SetVertexLabel
 		\SetVertexMath
  		\Vertex[a=0*120-30,d=0.5,L={v}]{v}
   		\Vertex[a=1*120-30,d=0.5,L={w}]{w}
   		\Vertex[a=2*120-30,d=0.5,L={u},Lpos=180]{u}
        \Edges(w,v,u,w)
    \end{tikzpicture} \quad\text{and}\quad	
    \begin{tikzpicture}[scale=\scf,baseline=-8pt]
        \GraphInit[vstyle=Classic]
        \SetVertexCircle
        \SetupArcStyle
        \SetVertexLabel
		\SetVertexMath
   		\Vertex[a=0*120-30,d=0.5,L={y}]{v}
   		\Vertex[a=1*120-30,d=0.5,L={z}]{w}
   		\Vertex[a=2*120-30,d=0.5,L={x},Lpos=180]{u}
        \Edge(u)(w)
        \Edge(v)(w)
    \end{tikzpicture},
\]
as well as the local homomorphism $f\colon \{u,v\}\to \VGamma$ of $\Gamma$ given by $f:=\begin{psmallmatrix}  u & v \\ z & y\end{psmallmatrix}$. Since $\Gamma$ is homomorphism homogeneous, $f$ can be extended to a local homomorphism  $\hat{f}\colon \{u,v,w\}\to \VGamma$, with $z=\hat{f}(u)\to \hat{f} (w)\to \hat{f}(v)=y$. Note that $\hat{f}(w)\notin\{x,y,z\}$. This implies that depending on the relation between vertices $x$ and $\hat{f}(w)$ the subgraph induced by $\{x,y,z,\hat{f}(w)\}$ is either of the following:  
 \[
	\begin{tikzpicture}[scale=\scf,baseline=0pt]
        \GraphInit[vstyle=Classic]
        \SetVertexCircle
        \SetupArcStyle
        \SetVertexLabel
		\SetVertexMath
   		\Vertex[x=0,y=-0.5,Lpos=-180,L={x}]{b}
   		\Vertex[x=1,y=0.5,L={\hat{f}(w)},Lpos=0]{f}
   		\Vertex[x=1,y=-0.5,Lpos=0,L={y}]{c}
   		\Vertex[x=0,y=0.5,Lpos=-180,L={z}]{a}
        \Edge(b)(a)
        \Edge(a)(f)
        \Edge(f)(c)
        \Edge(c)(a)
    \end{tikzpicture}\quad 
	\begin{tikzpicture}[scale=\scf,baseline=0pt]
        \GraphInit[vstyle=Classic]
        \SetVertexCircle
        \SetupArcStyle
        \SetVertexLabel
		\SetVertexMath
   		\Vertex[x=0,y=-0.5,Lpos=-180,L={x}]{b}
   		\Vertex[x=1,y=0.5,L={\hat{f}(w)},Lpos=0]{f}
   		\Vertex[x=1,y=-0.5,Lpos=0,L={y}]{c}
   		\Vertex[x=0,y=0.5,Lpos=-180,L={z}]{a}
        \Edge(b)(a)
        \Edge(a)(f)
        \Edge(f)(c)
        \Edge(c)(a)
        \Edge(b)(f)
    \end{tikzpicture}\quad
	\begin{tikzpicture}[scale=\scf,baseline=0pt]
        \GraphInit[vstyle=Classic]
        \SetVertexCircle
        \SetupArcStyle
        \SetVertexLabel
		\SetVertexMath
   		\Vertex[x=0,y=-0.5,Lpos=-180,L={x}]{b}
   		\Vertex[x=1,y=0.5,L={\hat{f}(w)},Lpos=0]{f}
   		\Vertex[x=1,y=-0.5,Lpos=0,L={y\,.}]{c}
   		\Vertex[x=0,y=0.5,Lpos=-180,L={z}]{a}
        \Edge(b)(a)
        \Edge(a)(f)
        \Edge(f)(c)
        \Edge(c)(a)
        \Edge(f)(b)
    \end{tikzpicture}
\]         
Note that the first two cases may not occur, since both graphs contain an induced path of length 2 and are thus excuded by Lemma~\ref{NoInducedChain}. The third graph is isomorphic to $\bA$. 

The case that $\Gamma$ contains $\bL_2$ is handled analogously.
\end{proof}

Note that $\bA$ embeds both $\bL_1$ and $\bL_2$, thus proving:
\begin{corollary}\label{confD}
	Let $\Gamma$ be a homomorphism homogeneous oriented graph that contains $C_3$, and that is not a tournament. Then $\Gamma$ contains configurations  $\bL_1$ and $\bL_2$.
\end{corollary}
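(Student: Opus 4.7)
The plan is to exploit Lemma~\ref{4conf} directly: under the stated hypotheses the configuration $\bA$ appears as an induced subgraph of $\Gamma$, so it suffices to inspect $\bA$ and to identify induced copies of $\bL_1$ and $\bL_2$ inside it. The remark preceding the corollary already flags this as the intended route (``$\bA$ embeds both $\bL_1$ and $\bL_2$''), so the entire task reduces to a routine verification on four vertices.

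Concretely, I would label the vertices of $\bA$ as in Lemma~\ref{4conf}, namely $x,y,z,u$ with arcs $z\to u$, $u\to y$, $y\to z$, $u\to x$, $x\to z$, and (crucially) with $x\not\sim y$ being the only non-arc in the configuration. For $\bL_1$ I would single out the induced subgraph on $\{x,y,z\}$: the arcs $x\to z$ and $y\to z$ together with the non-arc between $x$ and $y$ produce exactly the shape of $\bL_1$. For $\bL_2$ I would take the induced subgraph on $\{u,x,y\}$: the arcs $u\to x$ and $u\to y$ together with $x\not\sim y$ give the shape of $\bL_2$.

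Since $\Gamma$ contains $\bA$ as an induced subgraph (by Lemma~\ref{4conf}) and induced subgraphs of induced subgraphs are themselves induced subgraphs of $\Gamma$, this finishes the argument. There is no real obstacle here; the only care needed is to confirm from the picture of $\bA$ that $x$ and $y$ are indeed non-adjacent, which is built into the configuration as drawn.
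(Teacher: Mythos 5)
Your proof is correct and takes exactly the paper's route: the paper dispatches the corollary with the single remark that $\bA$ embeds both $\bL_1$ and $\bL_2$, and your explicit check on the triples $\{x,y,z\}$ (arcs $x\to z$, $y\to z$, yielding $\bL_1$) and $\{u,x,y\}$ (arcs $u\to x$, $u\to y$, yielding $\bL_2$) is precisely that verification spelled out, correctly using that $x\not\sim y$ is the unique non-arc of $\bA$ so both triples are genuinely induced copies. Nothing is missing.
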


\begin{proposition}\label{nonedgeeqrel}
Let $\Gamma$ be a homomorphism homogeneous oriented graph  that contains $C_3$. Then the non-edge relation $\not\sim$ is an equivalence relation.
\end{proposition}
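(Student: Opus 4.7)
Reflexivity of $\not\sim$ follows from the absence of loops in oriented graphs, and symmetry is immediate from the definition, so the entire content of the proposition lies in transitivity. I will argue by contradiction: suppose $x\not\sim y$, $y\not\sim z$, yet $x\sim z$, and by the symmetry of the hypotheses assume $x\to z$. If $\Gamma$ is a tournament the statement is vacuous, so I may invoke Corollary~\ref{confD} to conclude that both $\bL_1$ and $\bL_2$ belong to $\Age(\Gamma)$.

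The core technique pairs homomorphism homogeneity with Lemma~\ref{NoInducedChain}. Concretely, fix a witness of $\bL_2$ in $\Gamma$, namely vertices $u,v,w$ with $w\to u$, $w\to v$ and $u\not\sim v$. Since $\{u,v\}$ carries no edges, the map $u\mapsto x$, $v\mapsto y$ is a local homomorphism, and its HH extension across $w$ produces a vertex $q\in\VGamma$ with $q\to x$ and $q\to y$. The position of $q$ relative to $z$ is then pinned down by Lemma~\ref{NoInducedChain}: $q\not\sim z$ would give the induced oriented $2$-path $q\to x\to z$, and $z\to q$ would give $z\to q\to y$; hence $q\to z$, so that $q$ is in fact a common in-neighbour of the whole triple $\{x,y,z\}$. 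A symmetric argument based on $\bL_1$ applied to the pair $\{y,z\}$ yields a common out-neighbour $p$ of $\{y,z\}$, and the analogous two-case analysis (using the induced $2$-paths $y\to p\to x$ and $x\to z\to p$) forces $x\to p$ as well.

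To close, I combine the above with the observation that the edge $x\to z$ lies in a $3$-cycle of $\Gamma$: pick a witness of $C_3$ in $\Gamma$ with vertices $a,b,c$, and HH-extend the local homomorphism $a\mapsto x$, $b\mapsto z$ across $c$ to obtain a vertex $w$ with $x\to z\to w\to x$. Two further applications of Lemma~\ref{NoInducedChain} show $y\not\sim w$: if $y\to w$, then $y\to w\to x$ is an induced $2$-path, and if $w\to y$, then $z\to w\to y$ is induced. With $q$, $p$, and the triangle $x\to z\to w\to x$ in hand, I plan to exhibit a small local homomorphism (e.g.\ on $\{q,x,z\}$ or on a four-vertex subdomain containing $w$ and $y$) whose HH extension forces, via Lemma~\ref{NoInducedChain}, an induced oriented $2$-path involving $y$ and a freshly produced auxiliary vertex, contradicting that very lemma.

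The main obstacle is combinatorial bookkeeping. Each of the sub-configurations $\{q,x,y,z\}$, $\{x,y,z,p\}$ and $\{x,z,w,y\}$ is individually consistent with the absence of induced oriented $2$-paths, so the contradiction only emerges when these pieces are forced to coexist coherently in $\Gamma$ and their interactions are tracked simultaneously. Identifying the minimal local homomorphism whose HH extension actually ties these pieces together in a $2$-path-producing way, and verifying that every choice available to the extension leads to the same contradiction, is the delicate part of the argument.
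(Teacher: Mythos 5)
Your completed steps are all sound: reducing to transitivity, invoking Corollary~\ref{confD}, and the three extension arguments producing a common in-neighbour $q$ of $\{x,y,z\}$, a common out-neighbour $p$, and a triangle $x\to z\to w\to x$ with $y\not\sim w$ are correct applications of homomorphism homogeneity and Lemma~\ref{NoInducedChain}. But the proof has a genuine gap exactly at its decisive point: the final contradiction is only announced (``I plan to exhibit a small local homomorphism\dots''), never exhibited, and you yourself concede that each assembled piece is individually consistent with the absence of induced oriented $2$-paths. This is not deferred bookkeeping --- it is the actual content of the proposition. Indeed, the paper's proof shows that Lemma~\ref{NoInducedChain} plus one-point extensions do \emph{not} suffice on their own: after reducing the failure of transitivity to the configurations $\bC_1$ and $\bC_2$, the authors must import two further ingredients that your sketch never touches. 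To exclude $\bC_1$ they use the four-vertex configuration $\bA$ guaranteed by Lemma~\ref{4conf}; to exclude $\bC_2$ they split on the homogeneous core of $\Gamma$: if the core is $C_3$, then $\Gamma$ contains no transitive triangle, while if the core is $S(2)$ or $T^\infty$, then $\Gamma$ contains the configuration $\bK$ (Observation~\ref{S2K}), which feeds a further extension argument. The need for this core dichotomy is strong evidence that no single ``minimal local homomorphism'' of the kind you hope for closes the argument uniformly.

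It is worth noting that your construction comes close to one half of the missing argument: your vertex $p$ yields the transitive triangle $x\to p$, $x\to z\to p$, so in the case where the core of $\Gamma$ is $C_3$ you would be done immediately --- a transitive triangle admits no homomorphism to $C_3$, contradicting homomorphism equivalence of $\Gamma$ with its core. Had you invoked the core classification (Corollary~\ref{poss_cores}) at this point, only the $S(2)$/$T^\infty$ case would remain, and there something like $\bK$ is still needed to manufacture the contradiction. As written, however, the proposal stops short of a proof.
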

The proof of this claim makes use of the following simple observation:
\begin{observation}\label{S2K}
    $S(2)$ contains configuration $\bK$ given below:
    \[
\begin{tikzpicture}[scale=\scf]
    \def\R{1.5}
    \tikzset{EdgeStyle/.style = {->}}
    \tikzset{>=stealth'}		
    \draw[->,line width=0.5] (-1.3*\R,0) -- (1.3*\R,0) node[right] {$x$};
    \draw[->,line width=0.5] (0,-1.3*\R) -- (0,1.3*\R) node[left] {$y$};
    \draw[line width = 0.5] (0,0) circle (\R);
    \node[shape=circle,draw,fill = white,minimum size = 2pt, inner sep = 2pt] (u) at (70:\R){};
    \node[shape=circle,draw,fill = white,minimum size = 2pt, inner sep = 2pt] (x) at (180:\R){};
    \node[shape=circle,draw,fill = white,minimum size = 2pt, inner sep = 2pt] (v) at (270:\R){};
    \node[shape=circle,draw,fill = white,minimum size = 2pt, inner sep = 2pt] (y) at (340:\R){};
    \Edges(u,x,v,y,u)
    \Edge(v)(u)
    \Edge(x)(y)        \node at (-0.5,-2) {$\bK$};
\end{tikzpicture}
\]
\end{observation}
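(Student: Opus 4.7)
The plan is entirely constructive: I would exhibit four concrete points of $S$ on the unit circle whose induced subgraph in $S(2)$ is isomorphic to $\bK$, and then verify by inspection that all six arcs of $\bK$ are produced. Since $S$ is a countable \emph{dense} subset of the unit circle with no two antipodal points, I may choose points arbitrarily close to any prescribed angles, so I would pick vertices near the angles drawn in the figure: approximations $u\approx 70^\circ$, $x\approx 180^\circ$, $v\approx 270^\circ$, $y\approx 340^\circ$. Density of $S$ guarantees such choices exist inside $S$; the no-antipode condition is automatic because $S$ itself has no antipodal pair.

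Next I would compute the counter-clockwise angular gap for each of the six ordered pairs of vertices and compare it with $\pi$. For the chosen angles the gaps are $110^\circ$ (for $u\to x$), $90^\circ$ ($x\to v$), $70^\circ$ ($v\to y$), $90^\circ$ ($y\to u$), $160^\circ$ ($v\to u$), and $160^\circ$ ($x\to y$). Every one of these lies strictly in the open interval $(0,\pi)$, so by the defining rule of $S(2)$ each of the six arcs is drawn precisely in the direction shown in $\bK$. Consequently the induced subgraph of $S(2)$ on $\{u,x,v,y\}$ has exactly the arc set of $\bK$, establishing $\bK\in\Age(S(2))$.

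The only point of care is to pick the initial angles so that all six gaps are bounded away from $\pi$; this is what allows a perturbation to the nearby points of $S$ to preserve every single strict inequality, and hence every arc. In other words, the choice above is robust: any sufficiently small deformation still yields six CCW distances in $(0,\pi)$ and thus the same oriented configuration. No further combinatorial or model-theoretic machinery is needed, as the statement is a verification inside the explicit construction of $S(2)$.
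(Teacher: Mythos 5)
Your proof is correct and takes essentially the same route as the paper, which treats the observation as self-evident from its figure: the configuration $\bK$ is drawn directly on the geometric model of $S(2)$ with vertices at (approximately) $70^\circ$, $180^\circ$, $270^\circ$, $340^\circ$, exactly the placement you verify. Your explicit computation of the six counter-clockwise gaps and the density/perturbation argument simply spells out the verification the paper leaves implicit.
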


\begin{proof}[Proof of Proposition \ref{nonedgeeqrel}]
Note that the claim trivially holds if $\Gamma$ is a tournament, so we continue under the assumption that $\Gamma$ is not a tournament.

It is an easy observation that $\not\sim$ is an equivalence relation if and only if $\Gamma$ does not contain configuration 
\[
\begin{tikzpicture}[scale=\scf,baseline=-17pt]
        \GraphInit[vstyle=Classic]
        \SetVertexCircle
        \SetupArcStyle
        \SetVertexNoLabel
   		\Vertex[x=0,y=-0.5]{y}
   		\Vertex[x=1,y=0]{z}
   		\Vertex[x=0,y=0.5]{x}
        \Edges(x,y)
        \node at (0.5,-1) {$\bB$};
\end{tikzpicture}\,. 
\]

 Next, we show that $\Gamma$ contains configuration $\bB$  if and only if it contains one of the following configurations:
\[
\begin{tikzpicture}[scale=\scf,baseline=-15pt]
        \GraphInit[vstyle=Classic]
        \SetVertexCircle
        \SetupArcStyle
        \SetVertexNoLabel
   		\Vertex[x=0,y=-0.5]{y}
   		\Vertex[x=1,y=0.5]{z}
   		\Vertex[x=1,y=-0.5]{u}
   		\Vertex[x=0,y=0.5]{x}
        \Edge(x)(y)
        \Edge(x)(z)
        \Edge(u)(z)
        \node at (0.5,-1) {$\bC_1$};
\end{tikzpicture} \qquad
\begin{tikzpicture}[scale=\scf,baseline=-15pt]
        \GraphInit[vstyle=Classic]
        \SetVertexCircle
        \SetupArcStyle
        \SetVertexNoLabel
   		\Vertex[x=0,y=-0.5]{y}
   		\Vertex[x=1,y=0.5]{z}
   		\Vertex[x=1,y=-0.5]{u}
   		\Vertex[x=0,y=0.5]{x}
        \Edge(x)(y)
        \Edge(x)(z)
        \Edge(u)(z)
        \Edge(y)(z)
        \node at (0.5,-1) {$\bC_2$};
\end{tikzpicture}\,.         
\]
So suppose that $\Gamma$ contains configuration $\bB$. Let us fix in $\Gamma$  copies of $\bL_1$ and $\bB$:
\[
\begin{tikzpicture}[scale=\scf,baseline=-14pt]
        \GraphInit[vstyle=Classic]
        \SetVertexCircle
        \SetupArcStyle
        \SetVertexLabel
		\SetVertexMath
   		\Vertex[a=0*120-30,d=0.75,L={v}]{v}
   		\Vertex[a=1*120-30,d=0.75,L={w}]{w}
   		\Vertex[a=2*120-30,d=0.75,L={u},Lpos=180]{u}
        \Edge(u)(w)
        \Edge(v)(w)
\end{tikzpicture} \quad \text{and}\qquad 
\begin{tikzpicture}[scale=\scf,baseline=-17pt]
        \GraphInit[vstyle=Classic]
        \SetVertexCircle
        \SetupArcStyle
        \SetVertexLabel
		\SetVertexMath
   		\Vertex[x=0,y=-0.5,L={b}]{b}
   		\Vertex[x=1,y=0,L={c}]{c}
   		\Vertex[x=0,y=0.5,L={a}]{a}
        \Edges(x,y)
\end{tikzpicture}\,.
\]
The existence of the former in $\Gamma$ is due to Corollary~\ref{confD}. Consider the local homomorphism $f\colon \{u,v\}\to \VGamma$ of $\Gamma$ given by $f:=\begin{psmallmatrix}  u & v \\ a & c\end{psmallmatrix}$. Since $\Gamma$ is homomorphism homogeneous, $f$ can be extended to a local homomorphism $\hat{f}\colon \{u,v,w\}\to \VGamma$. Observe that from $u\to w \gets v$ it follows $a=\hat{f}(u)\to \hat{f}(w)\gets \hat{f}(v)=c$, so we conclude that $\hat{f}(w)\notin\{a,b,c\}$, implying that the subgraph of $\Gamma$ induced by $\{a,b,c,\hat{f}(w)\}$ is isomorphic either to $\bC_1$ or to $\bC_2$:
\[
\begin{tikzpicture}[scale=\scf,baseline=-17pt]
        \GraphInit[vstyle=Classic]
        \SetVertexCircle
        \SetupArcStyle
        \SetVertexLabel
		\SetVertexMath
   		\Vertex[x=0,y=-0.5,Lpos=-180,L={b}]{b}
   		\Vertex[x=1,y=0.5,L={\hat{f}(w)},Lpos=0]{f}
   		\Vertex[x=1,y=-0.5,Lpos=0,L={c\,,}]{c}
   		\Vertex[x=0,y=0.5,Lpos=-180,L={a}]{a}
        \Edge(a)(b)
        \Edge(a)(f)
        \Edge(c)(f)
\end{tikzpicture}\quad 
\begin{tikzpicture}[scale=\scf,baseline=-17pt]
        \GraphInit[vstyle=Classic]
        \SetVertexCircle
        \SetupArcStyle
        \SetVertexLabel
		\SetVertexMath
   		\Vertex[x=0,y=-0.5,Lpos=-180,L={b}]{b}
   		\Vertex[x=1,y=0.5,L={\hat{f}(w)},Lpos=0]{f}
   		\Vertex[x=1,y=-0.5,Lpos=0,L={c.}]{c}
   		\Vertex[x=0,y=0.5,Lpos=-180,L={a}]{a}
        \Edge(a)(b)
        \Edge(a)(f)
        \Edge(c)(f)
         \Edge(b)(f)
\end{tikzpicture} 
\]
The third possibility, where $\hat{f}(w)\to b$ cannot appear by Lemma~\ref{NoInducedChain}, since $c\to \hat{f}(w)\to b$, but $c\not\sim b$. 

For the proof of the other direction suppose that $\Gamma$ contains one of the following induced subgraphs
\[
\begin{tikzpicture}[scale=\scf, baseline=-18pt]
        \GraphInit[vstyle=Classic]
        \SetVertexCircle
        \SetupArcStyle
        \SetVertexLabel
		\SetVertexMath
   		\Vertex[x=0,y=-0.5,Lpos=-180,L={a}]{b}
   		\Vertex[x=1,y=0.5,L={c},Lpos=0]{f}
   		\Vertex[x=1,y=-0.5,Lpos=0,L={d}]{c}
   		\Vertex[x=0,y=0.5,Lpos=-180,L={b}]{a}
        \Edge(a)(b)
        \Edge(a)(f)
        \Edge(c)(f)
\end{tikzpicture}\quad \text{or} \quad 
\begin{tikzpicture}[scale=\scf,baseline=-18pt]
        \GraphInit[vstyle=Classic]
        \SetVertexCircle
        \SetupArcStyle
        \SetVertexLabel
		\SetVertexMath
   		\Vertex[x=0,y=-0.5,Lpos=-180,L={a}]{b}
   		\Vertex[x=1,y=0.5,L={c},Lpos=0]{f}
   		\Vertex[x=1,y=-0.5,Lpos=0,L={d}]{c}
   		\Vertex[x=0,y=0.5,Lpos=-180,L={b}]{a}
        \Edge(a)(b)
        \Edge(a)(f)
        \Edge(c)(f)
         \Edge(b)(f)
\end{tikzpicture}.         
\]
Then $\{a,b,d\}$ induces configuration $\bB$.

It is now clear that the task of showing that $\not\sim$ is an equivalence relation reduces to the check of the (non-)containment of configurations $\bC_1$ and $\bC_2$ in $\Gamma$.

Suppose that there are $a,b,c,d\in \VGamma$ such that
\[
\begin{tikzpicture}[scale=\scf,baseline=0pt]
 \GraphInit[vstyle=Classic]
        \GraphInit[vstyle=Classic]
        \SetVertexCircle
        \SetupArcStyle
        \SetVertexLabel
		\SetVertexMath
   		\Vertex[x=0,y=-0.5,Lpos=-180,L={a}]{b}
   		\Vertex[x=1,y=0.5,L={d},Lpos=0]{f}
   		\Vertex[x=1,y=-0.5,Lpos=0,L={b}]{c}
   		\Vertex[x=0,y=0.5,Lpos=-180,L={c}]{a}
        \Edge(a)(b)
        \Edge(a)(f)
        \Edge(c)(f)
\end{tikzpicture}
\]
 is an induced subgraph in $\Gamma$. Lemma~\ref{4conf} gives us the existence of $x,y,u,v\in \VGamma$ such that 
\[
\begin{tikzpicture}[scale=\scf,baseline=0pt]
        \GraphInit[vstyle=Classic]
        \SetVertexCircle
        \SetupArcStyle
        \SetVertexLabel
		\SetVertexMath
  		\Vertex[x=0,y=-0.75,Lpos=-90,L={v}]{y}
  		\Vertex[x=1,y=0,L={y}]{z}
  		\Vertex[x=-1,y=0,Lpos=180,L={x}]{u}
  		\Vertex[x=0,y=0.75,Lpos=90,L={u}]{x}
       \Edges(x,y,u,x)
       \Edges(y,z,x)
\end{tikzpicture}
\]
is an induced subgraph in $\Gamma$. Consider the local homomorphism $f\colon \{x,y,v\}\to \VGamma$ of $\Gamma$ given by $f:=\begin{psmallmatrix}  x & y & v \\ a & d & c\end{psmallmatrix}$. Since $\Gamma$ is homomorphism homogeneous, $f$ can be extended to a local homomorphism $\hat{f}\colon \{x,y,v,u\}\to \VGamma$. Then $b\to d=\hat{f}(y)\to \hat{f}(u)$. From Lemma~\ref{NoInducedChain} it follows that $b\sim\hat{f}(u)$. But, if $b\to\hat{f}(u)$, then since $\hat{f}(u)\to \hat{f}(v)=c$ we get $b\sim c$. Similarly, if $\hat{f}(u)\to b$, then, since $a=\hat{f}(x)\to \hat{f}(u)$, we get $a\sim b$. In both cases we arrive at a contradiction.\\
 Finally, suppose that there are $a,b,c,d\in \VGamma$ such that
\begin{equation}\label{eq3}
    \begin{tikzpicture}[scale=\scf,baseline=0pt]
        \GraphInit[vstyle=Classic]
        \SetVertexCircle
        \SetupArcStyle
        \SetVertexLabel
		\SetVertexMath
   		\Vertex[x=0,y=-0.5,Lpos=-180,L={a}]{b}
   		\Vertex[x=1,y=0.5,L={d},Lpos=0]{f}
   		\Vertex[x=1,y=-0.5,Lpos=0,L={b}]{c}
   		\Vertex[x=0,y=0.5,Lpos=-180,L={c}]{a}
        \Edge(a)(b)
        \Edge(a)(f)
        \Edge(c)(f)
        \Edge(b)(f) 
    \end{tikzpicture}
\end{equation}
 is an induced subgraph in $\Gamma$. We proceed by studying the following two cases:
	\textbf{Case 1.} The core of $\Gamma$ is $C_3$. Then $\Gamma$ has no induced subgraph of the shape 
 	\[
 	\begin{tikzpicture}[scale=\scf,baseline=-8pt]
        \GraphInit[vstyle=Classic]
        \SetVertexCircle
        \SetupArcStyle
        \SetVertexNoLabel
		\SetVertexMath
   		\Vertex[a=0*120-30,d=0.5,L={v}]{v}
   		\Vertex[a=1*120-30,d=0.5,L={w}]{w}
   		\Vertex[a=2*120-30,d=0.5,L={u},Lpos=180]{u}
        \Edge(u)(w)
        \Edge(w)(v)
        \Edge(u)(v)
         \end{tikzpicture}\,,
    \]
 but the vertex set $\{a,c,d\}$ induces one such, leading to a contradiction.	

 \noindent\textbf{Case 2.} The core of $\Gamma$ is $S(2)$ or $T^{\infty}$. Then $\Gamma$ contains the following induced subgraph	
 \[
    \begin{tikzpicture}[scale=\scf,baseline=-22pt]
        \GraphInit[vstyle=Classic]
        \SetVertexCircle
        \SetupArcStyle
        \SetVertexLabel
		\SetVertexMath
   		\Vertex[x=0,y=-0.75,Lpos=-90,L={v}]{y}
   		\Vertex[x=1,y=0,L={y}]{z}
   		\Vertex[x=-1,y=0,Lpos=180,L={x}]{u}
   		\Vertex[x=0,y=0.75,Lpos=90,L={u}]{x}
        \Edge(y)(x)
        \Edges(y,z,x,u,y)
        \Edge(u)(z)
    \end{tikzpicture},
  \]
since both $S(2)$ and $T^{\infty}$ contain $\bK$ (this is clear for $T^\infty$ and follows from Observation~\ref{S2K}  for $S(2)$). Consider the local homomorphism $f\colon \{v,y,x\}\to \VGamma$ of $\Gamma$ given by $f:=\begin{psmallmatrix}  v & y & x \\ a & d & c\end{psmallmatrix}$ (see \eqref{eq3}). Since $\Gamma$ is homomorphism homogeneous, $f$ can be extended to a local homomorphism  $\hat{f}\colon \{v,y,x,u\}\to \VGamma$. Then $b\to d=\hat{f}(y)\to \hat{f}(u)$. Again, from Lemma~\ref{NoInducedChain} it follows that $b\sim\hat{f}(u)$. But, if $b\to\hat{f}(u)$, then, since $\hat{f}(u)\to \hat{f}(x)=c$, we get $b\sim c$. Similarly, if $\hat{f}(u)\to b$, then since $a=\hat{f}(v)\to \hat{f}(u)$ we get $a\sim b$. In both cases we arrive at a contradiction.

 Thus $\Gamma$ contains neither $\bC_1$ nor $\bC_2$. Hence, $\not\sim$ is an equivalence relation.
\end{proof}

\begin{proposition}\label{ABorBA}
Let $\Gamma$ be a homomorphism homogeneous oriented graph that contains $C_3$. Let $A$ and $B$ be distinct equivalence classes of $\not\sim$. Then either $A\to B$ or $B\to A$ in $\Gamma$.	
\end{proposition}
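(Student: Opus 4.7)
I would prove this by contradiction, leveraging the fact that Proposition~\ref{nonedgeeqrel} guarantees $\not\sim$ is an equivalence relation, combined with the forbidden-path result of Lemma~\ref{NoInducedChain}.

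Suppose neither $A\to B$ nor $B\to A$ holds. Then there exist $a_1,a_2\in A$ and $b_1,b_2\in B$ with $a_1\to b_1$ and $b_2\to a_2$. Since $\not\sim$ is an equivalence relation (Proposition~\ref{nonedgeeqrel}) and $A,B$ are two of its classes, any two vertices inside $A$ are non-adjacent and likewise for $B$; in particular $a_1\not\sim a_2$ and $b_1\not\sim b_2$. Moreover, since $A\ne B$ are disjoint classes, $a_1\ne b_2$ and $a_1\sim b_2$.

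Now I would split on the direction of the arc between $a_1$ and $b_2$. If $a_1\to b_2$, then concatenating with $b_2\to a_2$ yields $a_1\to b_2\to a_2$; provided $a_1\ne a_2$, this is an induced directed path on three distinct vertices (note $b_2\notin A$), directly forbidden by Lemma~\ref{NoInducedChain}; and if $a_1=a_2$, then $a_1\to b_2\to a_1$ violates the asymmetry of $\E(\Gamma)$. Symmetrically, if $b_2\to a_1$, then $b_2\to a_1\to b_1$ is either an induced directed path with $b_1\not\sim b_2$ (when $b_1\ne b_2$), contradicting Lemma~\ref{NoInducedChain}, or a $2$-cycle when $b_1=b_2$, again contradicting asymmetry. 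Either alternative produces a contradiction, so the initial assumption fails and one of $A\to B$ or $B\to A$ must hold.

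I do not expect any real obstacle here: once $\not\sim$ is known to be an equivalence relation, the only content is the dichotomy on the orientation of a single arc, together with a routine check that Lemma~\ref{NoInducedChain} applies (the possible vertex coincidences $a_1=a_2$ or $b_1=b_2$ being ruled out by asymmetry of the arc relation). The proposition is essentially a direct corollary of the two preceding results.
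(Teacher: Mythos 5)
Your proof is correct and takes essentially the same route as the paper's: both extract witnesses $a_1\to b_1$ and $b_2\to a_2$ from the failure of $A\to B$ and $B\to A$, split on the orientation of the arc between $a_1$ and $b_2$, and obtain in each branch an induced directed path with non-adjacent endpoints, contradicting Lemma~\ref{NoInducedChain}. The only difference is organizational: the paper rules out the degenerate coincidences ($a_1=a_2$, $b_1=b_2$) up front via a WLOG reduction, whereas you absorb them into the two branches and dispatch them by asymmetry of the arc relation --- a harmless and arguably cleaner packaging of the same argument.
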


\begin{proof}
Suppose the opposite. Then $A\times B\not\subseteq \EGamma$ and $B\times A\not\subseteq \EGamma$, so there exist $a_1,a_2\in A$, $b_1,b_2\in B$ such that $(a_1,b_1), (b_2,a_2)\not\in \EGamma$. It follows that $(b_1,a_1), (a_2,b_2)\in \EGamma$, and $a_1\neq a_2$ or $b_1\neq b_2$. Without loss of generality, suppose $a_1\neq a_2$. If $b_1=b_2$, then $a_2\to b_2=b_1\to a_1$ --- a contradiction with Lemma~\ref{NoInducedChain}. Hence $b_1\neq b_2$.

Since $a_1$ and $b_2$ are from different equivalence classes of $\not\sim$, it follows that $a_1\sim b_2$. If $a_1\to b_2$, then $b_1\to a_1\to b_2$. If $b_2\to a_1$, then $a_2\to b_2\to a_1$. In both cases we again arrive at a contradiction with Lemma~\ref{NoInducedChain}. 
\end{proof}

At this point the general structure of homomorphism homogeneous graphs containing $C_3$ is clear. Each of them can be obtained from a tournament using the following construction: 
\begin{definition}
    Let $\Gamma$ be a countable oriented graph, and let $S$ be a countable set. Let $f\colon S\to \VGamma$ be surjective. Then the oriented graph $\Gamma[f]$ is given by 
    \[
    \V(\Gamma[f]) = S, \text{and } \E(\Gamma[f]) = \{(s,t)\mid (f(s),f(t))\in \EGamma\}.
    \]
\end{definition}
\begin{remark}
    In the definition above, if $\Gamma$ is a tournament, then $\ker f$ is equal to the non-arc relation $\not\sim$ of $\Gamma[f]$. Roughly speaking, the graph $\Gamma[f]$ is obtained from $\Gamma$ by replacing each vertex $v$ of $\Gamma$ by $|f^{-1}(v)|$ independent copies of $v$. Moreover, if in $\Gamma$ a vertex $v$ has an arc to a vertex $w$, then in $\Gamma[f]$, every copy of $v$ is connected by an arc to every copy of $w$.     
    A similar construction appears in Cherlin's classification of homogeneous oriented graphs (see \cite[Section II]{Cher1998}). There, for any $n\le\omega$, the graph $\Gamma[I_n]$ is obtained from $\Gamma$ through replacing each vertex with an independent set of $n$ vertices. In our notation $\Gamma[I_n]$ is isomorphic to $\Gamma[f]$, where $f$ is a function whose kernel classes are all of equal size $n$. 
    Note that whenever $f\colon S\to V(\Gamma)$ and $g\colon T\to V(\Gamma)$ are surjective functions such that for all $v\in V(\Gamma)$ we have that $|f^{-1}(v)|=|g^{-1}(v)|$, then $\Gamma[f]\cong\Gamma[g]$. In other words, $\Gamma[f]$ is determined, up to isomorphism by the function $n_f\colon V(\Gamma)\to (\omega\cup\{\omega\})\setminus\{0\}$ given by $n_f\colon v\mapsto |f^{-1}(v)|$. 
    Abusing notation, for every function $\chi\colon V(\Gamma)\to(\omega\cup\{\omega\})\setminus\{0\}$ we denote by $\Gamma[\chi]$  any graph of the shape $\Gamma[f]$ for which $n_f=\chi$.
\end{remark}

\begin{example}
	Let $C_3$ be given by
	\[
	    \begin{tikzpicture}[scale=\scf,baseline=-8pt]
        \GraphInit[vstyle=Classic]
        \SetVertexCircle
        \SetupArcStyle
        \SetVertexLabel
 		\SetVertexMath
  		\Vertex[a=0*120-30,d=0.5,L={c_3}]{v}
   		\Vertex[a=1*120-30,d=0.5,L={c_2}]{w}
   		\Vertex[a=2*120-30,d=0.5,L={c_1},Lpos=180]{u}
        \Edges(w,v,u,w)
    \end{tikzpicture}
	\]
	Let $S:=\{s_1,s_2,s_3,s_4,s_5,s_6\}$, and let $f\colon S\to V(C_3)$ be given by 
	\[
		f:= \begin{pmatrix}
			s_1 & s_2 & s_3 & s_4 & s_5 & s_6\\
			c_1 & c_2 & c_2 & c_3 & c_3 & c_3 
		\end{pmatrix}.
	\] 
	Then $\Gamma[f]$ looks as follows:
	\[
	    \begin{tikzpicture}[scale=\scf,baseline=-8pt]
        \GraphInit[vstyle=Classic]
        \SetVertexCircle
        \SetupArcStyle
        \SetVertexLabel
 		\SetVertexMath
  		\Vertex[a=0*60-90,d=1,L={s_1},Lpos=0*60-90]{s1}
  		\Vertex[a=-1*60-90,d=1,L={s_2},Lpos=-1*60-90]{s2}
  		\Vertex[a=-2*60-90,d=1,L={s_3},Lpos=-2*60-90]{s3}
  		\Vertex[a=-3*60-90,d=1,L={s_4},Lpos=-3*60-90]{s4}
  		\Vertex[a=-4*60-90,d=1,L={s_5},Lpos=-4*60-90]{s5}
  		\Vertex[a=-5*60-90,d=1,L={s_6},Lpos=-5*60-90]{s6}
  		\Edges(s1,s2,s4,s1)
  		\Edges(s1,s3,s4,s1)
  		\Edges(s2,s5,s1)
  		\Edges(s2,s6,s1)
  		\Edges(s3,s5)
  		\Edges(s3,s6)
 		\end{tikzpicture}
	\]
	Moreover, we have 
	\[
		n_f = 
		\begin{pmatrix}
 			c_1 & c_2 & c_3\\
 			1 & 2 & 3
 		\end{pmatrix}.
	\]
\end{example}

\begin{proposition}\label{CyclicTransitive}
Let $\Gamma$ be a homomorphism homogeneous oriented graph that contains $C_3$. Then $\Gamma$ is isomorphic to $C_3[f]$, $S(2)[f]$, or $T^\infty[f]$, for some $f$.
\end{proposition}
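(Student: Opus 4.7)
The plan is to factor $\Gamma$ through the non-arc equivalence relation $\not\sim$ and identify the resulting quotient tournament with the HH core of $\Gamma$. Using Proposition~\ref{nonedgeeqrel} and Proposition~\ref{ABorBA}, I first define an oriented graph $T_\Gamma$ whose vertices are the $\not\sim$-classes of $\Gamma$, with an arc from $[u]$ to $[v]$ whenever $[u]\neq[v]$ and $u\to v$ in $\Gamma$. By Proposition~\ref{ABorBA} this is well-defined, and since every pair of distinct classes is connected by an arc in one direction, $T_\Gamma$ is a tournament. The quotient map $\pi\colon V(\Gamma)\to V(T_\Gamma)$ is a graph homomorphism; after fixing a section $\sigma$ of $\pi$, the set $\sigma(V(T_\Gamma))$ induces in $\Gamma$ a subtournament canonically isomorphic to $T_\Gamma$, which lets me regard $T_\Gamma$ as an induced subgraph of $\Gamma$.

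The heart of the argument is to show that $T_\Gamma$ is itself homomorphism homogeneous. Given a local homomorphism $g$ from a finite subtournament of $T_\Gamma$ into $T_\Gamma$, I would lift it to $\Gamma$ via $\sigma(a)\mapsto\sigma(g(a))$; since arcs in $\Gamma$ can only occur between distinct $\not\sim$-classes, this assignment preserves arcs and so is a local homomorphism of $\Gamma$. By homomorphism homogeneity of $\Gamma$ it extends to some $\tilde g\in\End(\Gamma)$, and then $\bar g:=\pi\circ\tilde g\circ\sigma$ is an endomorphism of $T_\Gamma$ extending $g$.

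Next I would verify that $T_\Gamma$ is in fact a core of $\Gamma$ in the sense of the Definition: it is itself a core (every tournament is), it is an induced subgraph of $\Gamma$, and $\sigma\circ\pi$ is an endomorphism of $\Gamma$ whose image lies in $V(T_\Gamma)$. Combining this with the HH property established in the previous step, the uniqueness statement from the Proposition preceding Corollary~\ref{poss_cores} forces $T_\Gamma\cong\Delta$, where $\Delta$ denotes the HH core of $\Gamma$. Since $\Gamma$ contains $C_3$, so does $T_\Gamma$, so Corollary~\ref{poss_cores} restricts $\Delta$ to one of $C_3$, $S(2)$, and $T^\infty$.

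To finish, setting $f:=\pi$ and identifying $V(T_\Gamma)$ with $V(\Delta)$, I would check directly from the definition of $\Delta[f]$ that $(s,t)\in \E(\Delta[f])$ if and only if $s\to t$ in $\Gamma$, the nontrivial direction once again resting on Proposition~\ref{ABorBA}. The main obstacle is the HH-lifting step for $T_\Gamma$; however, once the role of the section $\sigma$ is made explicit the verification becomes transparent, and everything else reduces to bookkeeping around the quotient construction and the uniqueness of the HH core.
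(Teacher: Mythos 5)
Your proof is correct, and it shares its skeleton with the paper's: both quotient $\Gamma$ by $\not\sim$ (legitimate by Propositions~\ref{nonedgeeqrel} and \ref{ABorBA}), both use the retraction $\sigma\circ\pi$ onto a transversal subtournament (this is exactly the paper's map $f$), and both conclude with $\Gamma = T_\Gamma[\pi]$. The genuine divergence is in how the quotient tournament is pinned down. The paper proves two claims about the transversal tournament $\Delta$: that $\Age(\Delta)=\Age(T)$, where $T$ is the core of $\Gamma$ (via homomorphism equivalence of $T$, $\Gamma$, and $\Delta$, plus the fact that homomorphisms between tournaments are embeddings), and that $\Delta$ is weakly homogeneous (one-point extensions of local \emph{isomorphisms}, produced by the same lift--extend--project computation as your $\pi\circ\tilde g\circ\sigma$); \Fraisse's theorem then yields $\Delta\cong T\in\{C_3,S(2),T^\infty\}$. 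You instead prove outright that $T_\Gamma$ is homomorphism homogeneous (the same computation, run once to get a full endomorphism rather than pointwise), check that $T_\Gamma$ is a core of $\Gamma$ in the sense of the Definition (every tournament is a core, and $\sigma\circ\pi$ witnesses the endomorphism requirement), and then appeal to the uniqueness of the HH core together with Corollary~\ref{poss_cores}. Your route dispenses with the age computation (the paper's Claim~1) and with \Fraisse's theorem at this spot, and it makes transparent why the core contains $C_3$ (the three vertices of any $C_3$ in $\Gamma$ lie in pairwise distinct $\not\sim$-classes), a point the paper's ``In particular, $T\in\{C_3,S(2),T^\infty\}$'' passes over silently; the paper's route is more self-contained here, resting on the classical \Fraisse theorem rather than on the heavier uniqueness statement from \cite{PecPec16b} (which, to be fair, the paper has already quoted and used). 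Two small points for a polished write-up: your terse justification that the lift $\sigma(a)\mapsto\sigma(g(a))$ preserves arcs should be spelled out as ``an arc $\sigma(a_1)\to\sigma(a_2)$ in $\Gamma$ forces $a_1\to a_2$ in $T_\Gamma$, hence $g(a_1)\to g(a_2)$, hence $\sigma(g(a_1))\to\sigma(g(a_2))$ by Proposition~\ref{ABorBA}''; and, exactly like the paper's proof, yours silently uses countability of $\Gamma$ (for existence and uniqueness of the HH core in your case, for the transversal indexing and \Fraisse's theorem in the paper's), which is harmless under the paper's standing conventions.
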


\begin{proof}
  Let $\Gamma$ be a homomorphism homogeneous oriented graph that contains $C_3$, and let $T$ be its core. In particular, $T\in\{C_3, S(2), T^\infty\}$. Let $(v_i)_{i<\omega}$ be a transversal of $\VGamma/\not\sim$. Let $f\colon \VGamma\to \VGamma$ be the function that assigns to each $x\in \VGamma$ the unique $v_i$ such that $[x]_{\not\sim} = [v_i]_{\not\sim}$.  Then $f$ is an endomorphism of $\Gamma$ by Proposition~\ref{ABorBA}, and  $\im f$ induces a tournament $\Delta$ in $\Gamma$. 

\noindent\textbf{Claim 1.} $\Age (\Delta)=\Age (T)$. To see that this holds, recall that $T$ is the core of $\Gamma$, so $T$ and $\Gamma$ are homomorphism equivalent. On the other hand, $f\colon \Gamma\to \Delta$, and $\Delta \le \Gamma$, so $\Gamma$ and $\Delta$ are homomorphism equivalent, implying that $\Delta$ and $T$ are homomorphism equivalent, too. Since both $\Delta$ and $T$ are tournaments, it follows that $\Delta$ embeds into $T$ and $T$ embeds into $\Delta$. Hence, $\Age (\Delta)=\Age (T)$.

\noindent\textbf{Claim 2.} $\Delta$ is homogeneous. To prove this fact, let $g\colon A\to B$ be a local isomorphism of $\Delta$, and let $v_j\in \VDelta\setminus A$. 
Then $g$ is a local homomorphism of $\Gamma$. Since $\Gamma$ is homomorphism homogeneous, there exists $\hat{g}\in \End (\Gamma)$ that extends $g$. Let $k<\omega$ be such that $v_k=f(\hat{g}(v_j))$. Then $g'\colon A\cup \{v_j\}\to B\cup \{v_k\}$ defined by 
\[g'\colon x\mapsto \begin{cases} 
                           g(x), & \text{if }x\in A,\\
	                       v_k, & \text{if } x=v_j
\end{cases}
\]
is a one-point-extension of $g$ to $v_j$.
Hence, $\Delta$ is weakly homogeneous (see the remark after the proof), and thus homogeneous.
From the previous two observations and from \Fraisse's Theorem we conclude that $\Delta\cong T$.

Observe now that $\Gamma=\Delta[f]$.
Since $\Delta\cong T$, this finishes the proof.   
\end{proof}
\begin{remark}
    In this proof we used the term ``weakly homogeneous''. An oriented graph $\Gamma$ is called \emph{weakly homogeneous} if for every local isomorphism $f$ of $\Gamma$ with domain $A$ and for every finite superset $\hat{A}\supseteq A$ in $\VGamma$ there exists a local isomorphism $\hat{f}$ of $\Gamma$ with domain $\hat{A}$ that extends $f$. An easy back-and-forth argument shows that a countable oriented graph is weakly homogeneous if and only if it is homogeneous (see \cite[Lemma 6.1.4(b)]{Hod97}).
\end{remark}

At this point it remains to check, for which $f$ the graphs $T[f]$ (for $T\in\{C_3,S(2),T^\infty\}$)  are indeed homomorphism homogeneous. Towards this goal, let us analyse, what it means for $T[f]$ \emph{not} to be homomorphism homogeneous. In this case there must exist finite sets $A,B\subseteq V(T[f])$, a surjective function $h\colon A\to B$, and a vertex $v\in V(T[f])$, such that $h$ is a local homomorphism of $T[f]$ that cannot be extended to a local homomorphism with domain $A\cup\{v\}$. A quadruple $(A,B,h,v)$ with these properties is called a \emph{witness} (see \cite[Definition 2.4]{FelPecPec20}, \cite[Section 3.1.1]{MPPhD}). A witness, for which $|A|$ is minimal, is called a \emph{minimal witness}. In other words, if  $(A,B,h,v)$ is a minimal witness, then for all $(A',B',h',v')$, such that $|A'|<|A|$, $h'\colon A'\to B'$ is a surjective local homomorphism of $T[f]$, there is an extension of $h'$ to a local homomorphism of $T[f]$ with domain $A'\cup\{v'\}$. The following technical lemma characterizes possible minimal witnesses in oriented graphs of shape $T[f]$, where $T\in\{C_3,S(2),T^\infty\}$:
\begin{lemma}\label{lem:minwit}
	Let $T\in\{C_3,S(2),T^\infty\}$, let $S$ be a countable set,  and let $f\colon S\to V(T)$ be a surjective function, such that $T[f]$ is not homomorphism homogeneous. Let, further, $(A,B,h,v)$ be a minimal witness in $T[f]$. Then the following are true:
	\begin{enumerate}[label=(\arabic*), ref=(\arabic*)]
		\item\label{it1} $h$ is a bijection,
		\item\label{it2} $B$ induces a tournament in $T[f]$,
		\item\label{it3} $\forall a\in A\,:\, a\sim v$,
		\item\label{it4} $A$ does not induce a tournament in $T[f]$.
	\end{enumerate}
	Further, if $\widehat{B}$ is the tournament that is obtained from $B$ by adjoining a vertex $\hat{v}$ in such a way that 
	\[
		\tag{$\ast$}\label{eq:star} \forall a\in A\,:\, (a\to v)  \iff (h(a)\to \hat{v}),
	\]
	then $\widehat{B}\notin\Age(T)$. However, all proper oriented subgraphs of $\widehat{B}$ are in $\Age(T)$. 
\end{lemma}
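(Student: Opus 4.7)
The plan is to prove (1)--(4) together with the claim $\widehat{B}\notin\Age(T)$ and the minimality of $\widehat{B}$ as an obstruction by showing that any failure of a claim yields an extension of $h$ to $A\cup\{v\}$, contradicting the witness property. Throughout, two structural features of $T[f]$ drive the arguments: first, two vertices $x,y$ of $T[f]$ satisfy $f(x)=f(y)$ if and only if $x\not\sim y$, in which case they are \emph{twins}, i.e.\ they have identical in- and out-neighborhoods; second, every tournament induced in $T[f]$ projects via $f$ onto an isomorphic subtournament of $T$, so its isomorphism type lies in $\Age(T)$.

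For items (1)--(3), the argument is uniform: remove one ``offending'' vertex $a_1$ from $A$, invoke minimality of $(A,B,h,v)$ on the smaller surjective local homomorphism $h|_{A\setminus\{a_1\}}$ to obtain an extension $\tilde h$ that includes $v$, and then glue $a_1$ back, keeping its original $h$-value. For (1), if $h(a_1)=h(a_2)$ with $a_1\ne a_2$, irreflexivity of $E$ forces $a_1\not\sim a_2$, hence $f(a_1)=f(a_2)$, so $a_1$ and $a_2$ are twins and the edge between $a_1$ and $v$ lifts correctly because the edge between $a_2$ and $v$ does. For (2), a non-arc $b_1\not\sim b_2$ in $B$, together with the bijectivity from (1), forces $a_1\not\sim a_2$ as well (any arc between $a_1,a_2$ would push to an arc between $b_1,b_2$), and the gluing works by combining the twin properties of both $\{a_1,a_2\}$ and $\{b_1,b_2\}$ to chain the needed implications. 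For (3), if some $a\in A$ satisfies $a\not\sim v$, the gluing is immediate, since no edge between $a$ and $v$ needs to be verified.

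The claim $\widehat{B}\notin\Age(T)$ is where the homogeneity of $T$ enters. Suppose, for contradiction, there is an embedding $\iota\colon\widehat{B}\to T$. Since $B$ induces a tournament in $T[f]$ by (2), the restriction $f|_B$ is injective and is itself an embedding of $B$ into $T$; homogeneity of $T$ then yields an automorphism $\alpha$ of $T$ with $\alpha\circ\iota|_B=f|_B$. Lifting $\alpha(\iota(\hat v))$ to any vertex $v^\ast$ in its $f$-fiber inside $T[f]$ produces a vertex whose edges to $B$ mirror those of $\hat v$ in $\widehat{B}$; setting $h(v):=v^\ast$ extends $h$, a contradiction. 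Item (4) then drops out: if $A$ were a tournament, then by (3) the set $A\cup\{v\}$ would be a tournament in $T[f]$, and extending $h$ by $v\mapsto\hat v$ would yield an isomorphism $A\cup\{v\}\cong\widehat{B}$, placing $\widehat{B}$ in $\Age(T)$ and contradicting what was just proved.

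For the final assertion that every proper subgraph of $\widehat{B}$ lies in $\Age(T)$, it suffices to treat one-vertex deletions. Removing $\hat v$ leaves the tournament $B$, which is in $\Age(T)$ by the projection principle. Removing $b_0\in B$, set $a_0:=h^{-1}(b_0)$ and apply minimality to $h|_{A\setminus\{a_0\}}$ to obtain an extension $\tilde h$ to $(A\setminus\{a_0\})\cup\{v\}$; using (3) one verifies that the image of $\tilde h$ induces a tournament in $T[f]$ isomorphic to $\widehat{B}$ with $b_0$ removed. The main obstacle I anticipate is exactly this bookkeeping in the proper-subgraph claim: one must rule out that $\tilde h(v)$ coincides with any $b\in B\setminus\{b_0\}$ (ruled out by (3), since $\tilde h(v)=b$ would force $v\not\sim h^{-1}(b)$), and handle the borderline case $\tilde h(v)=b_0$, where the claimed isomorphism still holds because the defining edge conditions of $\hat v$ in $\widehat{B}$ match the edges between $b_0$ and $B\setminus\{b_0\}$ in $T[f]$.
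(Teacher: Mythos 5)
Your proposal is correct, and while items \ref{it1}--\ref{it3} follow essentially the paper's own remove-one-vertex-and-reglue arguments via twins (with the cosmetic difference that for \ref{it3} you invoke minimality, where the paper extends $h$ directly by $\hat{h}(v):=h(a)$ without needing it), your handling of \ref{it4} and of the two age claims is organized genuinely differently. The paper proves \ref{it4} first, by a standalone homogeneity argument: it conjugates $h$ through $f$ to a local isomorphism $h'=f_B\circ h\circ f_A^{-1}$ of $T$, extends it to an automorphism, and lifts a preimage of $\hat{h}'(f(v))$ back into $T[f]$; it then proves $\widehat{B}\notin\Age(T)$ by a \emph{second}, similar homogeneity argument. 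You instead prove $\widehat{B}\notin\Age(T)$ first --- and your argument indeed uses only \ref{it1}--\ref{it3} plus the definition of $\widehat{B}$, so there is no circularity --- and then obtain \ref{it4} as a two-line corollary: if $A$ induced a tournament, then by \ref{it3} so would $A\cup\{v\}$, which by \eqref{eq:star} is isomorphic to $\widehat{B}$ and projects under $f$ into $T$, a contradiction. This consolidates the two uses of homogeneity into one and makes the logical dependency explicit; the paper's route makes \ref{it4} self-contained at the cost of repetition. For the final minimality claim the paper argues contrapositively (a proper subtournament $B'\notin\Age(T)$ containing $\hat v$ would yield the smaller witness $(A',B',h',v)$, with the verification compressed into a ``Clearly''), whereas you argue directly: reduce to one-vertex deletions (legitimate, since ages are hereditary), and for each $b_0\in B$ realize $\widehat{B}-b_0$ inside $T[f]$ as the image of the extension $\tilde h$ supplied by minimality. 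Your case analysis there is exactly the bookkeeping the paper's ``Clearly'' suppresses, and you handle it correctly: $\tilde h(v)\in B\setminus\{b_0\}$ is impossible since \ref{it3} together with irreflexivity would produce a loop, and in the borderline case $\tilde h(v)=b_0$ the arcs between $b_0$ and $B\setminus\{b_0\}$ agree with those prescribed for $\hat v$ by \eqref{eq:star}, \ref{it3}, and asymmetry, so the image is still a copy of $\widehat{B}-b_0$ and the projection principle places it in $\Age(T)$. In short: same toolkit (twins, minimality, homogeneity plus fiber lifting, projection of tournaments onto $T$), but a leaner deduction graph; your version also serves as a fuller verification of the step the paper leaves implicit.
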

\begin{proof}
	\noindent\textbf{About \ref{it1}.} Suppose that $h(a_1)=h(a_2)$, but $a_1\neq a_2$. Then $a_1\not\sim a_2$. Let $A':=A\setminus\{a_2\}$, $h':=h\restr_{A'}$. By minimality, $(A',B,h',v)$ is not a witness. Let $\hat{h}'$ be an extension of $h'$ to a local homomorphism of $T[f]$ with domain $A'\cup\{v\}$. Define an extension $\hat{h}\colon A\cup\{v\}\to B\cup\{\hat{h}'(v)\}$ of $h$ according to $\hat{h}(v) := \hat{h}'(v)$.  Since $a_1\not\sim a_2$, we have 
	\[
		(v\to a_2) \implies (v\to a_1) \implies \hat{h}(v) = \hat{h}'(v)\to\hat{h}'(a_1) = \hat{h}(a_2).
	\]
	Analogously we obtain that $a_2\to v$ implies $\hat{h}(a_2)\to \hat{h}(v)$. Thus, $\hat{h}$ is a local homomorphism, a contradiction. Consequently, $h$ is injective and hence bijective.
	
	\noindent\textbf{About \ref{it2}.} Suppose, there exist $b_1,b_2\in B$ such that $b_1\not\sim b_2$. Let $a_1\coloneqq h^{-1}(b_1)$, $a_2\coloneqq h^{-1}(b_2)$,  $A':=A\setminus \{a_2\}$,   $B':= B\setminus\{b_2\}$, and let $h'\colon A'\to B'$ be the appropriate restriction of $h$. Then $h'$ is a local homomorphism of $T[f]$. By the minimality assumption,  $(A',B',h',v)$ is not a witness. Let $\hat{h}'$ be a local homomorphism  of $T[f]$ that extends $h'$ to $A'\cup\{v\}$. Define an extension  $\hat{h}\colon A\cup\{v\}\to B\cup\{\hat{h}'(v)\}$ of $h$ according to  $\hat{h}(v):=\hat{h}'(v)$. Since  $a_1\not\sim a_2$, we have  $a_1\to v$ if and only if $a_2\to v$, and $v\to a_1$ if and only if $v\to a_2$. Since $b_2\not\sim b_1$, we also have $b_1\to \hat{h}'(v)$ if and only if $b_2\to\hat{h}'(v)$ and $\hat{h}'(v)\to b_1$ if and only if $\hat{h}'(v)\to b_2$. This is enough to conclude that $\hat{h}$ is indeed a homomorphism, a contradiction. It follows that $B$ induces a tournament in $T[f]$. 
	
	\noindent\textbf{About \ref{it3}.} Suppose on the contrary that for some $a\in A$ we have $a\not\sim v$. Then we may extend $h$ to $A\cup\{v\}$ by defining $\hat{h}(v):= h(a)$. Indeed, for all $a'\in A$ we have $a'\to v$ if and only if $a'\to a$, and $v\to a'$ if and only if $a\to a'$. This implies that $\hat{h}$ is a local homomorphism extending $h$, a contradiction. It follows that $a\sim v$, for all $a\in A$.
	
	\noindent\textbf{About \ref{it4}.} Suppose that $A$ induces a tournament in $T[f]$. Then $h$ induces an isomorphism. Let $f_A\colon A\to f[A]$ and $f_B\colon B\to f[B]$ be the appropriate restrictions of $f$. Note that both $f_A$ and $f_B$ are isomorphisms of tournaments (here we identify $A$, $B$, $f[A]$, $f[B]$ with their respective subtournaments in $T[f]$ and in $T$). Let $h'\colon f[A]\to f[B]$ be the unique mapping that makes the following diagram commutative:
	\[
	\begin{tikzcd}
		A \ar[r,"h"]\ar[d,"f_A"']& B\ar[d,"f_B"] \\
		f[A]\ar[r,dashed,"h'"] & f[B].
	\end{tikzcd}
	\]
	In other words, $h'= f_B\circ h\circ f_A^{-1}$. 
	Note now that $h'$ is a local isomorphism of $T$. Let $\hat{h}'$ be an extension of $h'$ to an automorphism of $T$. Let $v':=f(v)$, $\hat{v}':=\hat{h}'(v')$. Finally, let $\hat{v}$ be an arbitrary element of $f^{-1}(\hat{v}')$, and define an extension $\hat{h}\colon A\cup\{v\}\to B\cup\{\hat{v}\}$ of $h$ according to $\hat{h}\colon v\mapsto \hat{v}$. Then $\hat{h}$ is a local homomorphism of $T[f]$. Indeed, for $a\in A$ we may compute:
		\begin{align*}
			(a\to v) &\iff (f(a)\to f(v)) \iff (\hat{h}'(f(a))\to  \hat{h}'(f(v)))\\
			&\iff (\hat{h}'(f(a))\to\hat{h}'(v')) \\
			&\iff f^{-1} (h'(f(a)))\to f^{-1}(\hat{h}'(v')) = f^{-1}(\hat{v}').\tag{\dag}\label{eq:dag}
		\end{align*}
		Since $f(\hat{h}(a)) = f_B(h(a))= h'(f_A(a))= h'(f(a))$, we have that  $\hat{h}(a)\in f^{-1} (h'(f(a))$. Moreover,   $\hat{h}(v)=\hat{v}\in f^{-1}(\hat{v}')$, so we obtain, using Proposition~\ref{ABorBA},  that \eqref{eq:dag} is equivalent to $\hat{h}(a)\to\hat{h}(v)$. It follows that $(A,B,h,v)$ is not a witness after all --- a contradiction. We conclude that $A$ does not induce a tournament in $T[f]$. 
		
	\noindent\textbf{About $\bf\widehat{B}\notin\Age(T)$.} Suppose that $\widehat{B}\in\Age(T)$. Clearly, the subtournament of $T$ that is induced by $f[B]$ is isomorphic to $B$. For simplicity, let us assume that $B$ is actually a subtournament of $T$ (and that $f\restr_B$ is the identical embedding). Then, by the homogeneity of $T$, there exists an embedding $\iota\colon \widehat{B}\to T$, such that the following diagram commutes:
	\[
	\begin{tikzcd}
		T\\
		B \ar[u,hook,"="',"f\restr_B"] \ar[r,hook,"="]& \widehat{B}.\ar[ul,hook',"\iota"']
	\end{tikzcd} 
	\]
	Let $\hat{v}':=\iota(\hat{v})$ and $v'\in f^{-1}(\hat{v}')$. Define an extension $h'\colon A\cup \{v\}\to B\cup\{v'\}$ of $h$ according to $h'\colon v\mapsto v'$. We claim that $h'$ is a local homomorphism of $T[f]$. To see this, let $a\in A$. Then 
	\begin{align*}
		(a\to v) &\iff (h(a)\to \hat{v}) \iff (\iota(h(a)) \to \hat{v}')\\
		& \iff (f(h(a))\to\hat{v}') \iff h'(a)= h(a)\to v' = h'(v).
	\end{align*}
	Thus, $h'$ is indeed a local homomorphism --- a contradiction. It follows that $\widehat{B}\notin\Age(T)$. 
	
	\noindent\textbf{About the minimality of $\bf\widehat{B}$.} Suppose some proper subtournament $B'$ of $\widehat{B}$ is not in $\Age(T)$. Then $\hat{v}\in B'$ and we may define $A':=h^{-1}[B'\setminus\{\hat{v}\}]$, and $h':=h\restr_{A'}$. Clearly, then $(A',B',h',v)$ is a witness, a contradiction to the minimality assumption. Thus $\widehat{B}$ is indeed a minimal forbidden subtournament for $\Age(T)$.  
\end{proof}
\begin{proposition}\label{prop:back}
	The following are true:
	\begin{enumerate}[label=(\arabic*), ref=(\arabic*)]
		\item\label{rt1} $C_3[f]$ is homomorphism homogeneous, if and only if $f$ is a bijection;
		\item\label{rt2} $S(2)[f]$ is homomorphism homogeneous if and only if  
		\[
		\forall w\in V(S(2))\,:\, |f^{-1}(w)| \le 2;\tag{\ddag}\label{eq:ddag}
		\]
		\item\label{rt3} $T^\infty[f]$ is homomorphism homogeneous for all $f$ with countable domain.
	\end{enumerate}
\end{proposition}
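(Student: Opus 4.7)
The plan is to apply Lemma~\ref{lem:minwit} uniformly across all three cases. For each core $T\in\{C_3, S(2), T^\infty\}$, every failure of homomorphism homogeneity of $T[f]$ yields a minimal witness $(A,B,h,v)$ in which the tournament $\widehat{B}$ constructed from $B$ and $v$ must be a minimal tournament not in $\Age(T)$. Combining this with the classical list of such minimal forbidden tournaments---none for $T^\infty$, only the transitive 3-tournament $T_3$ for $C_3$, and precisely a vertex dominating a $C_3$ or a vertex dominated by a $C_3$ for $S(2)$---and with the extra structural constraints of the lemma (namely that $h$ is a bijection and $A$ does not induce a tournament), the three claims will follow.

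Item \ref{rt3} and the backward direction of \ref{rt1} fall out immediately. Since $\Age(T^\infty)$ is the class of all finite tournaments, no $\widehat{B}$ produced by the lemma can fail to embed into $T^\infty$; hence no minimal witness exists, and $T^\infty[f]$ is homomorphism homogeneous for every countable $f$. For \ref{rt1} backward, if $f$ is a bijection then $C_3[f]\cong C_3$, a finite homogeneous tournament, which is therefore homomorphism homogeneous.

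The forward directions of \ref{rt1} and \ref{rt2} are handled by exhibiting concrete witnesses. For \ref{rt1}, fix $a_1\neq a_2$ in $f^{-1}(c_1)$ and choose $v\in f^{-1}(c_2)$, $b\in f^{-1}(c_3)$; taking $A:=\{a_1,a_2\}$, $B:=\{b,a_1\}$ and $h\colon a_1\mapsto b,\ a_2\mapsto a_1$, the map $h$ is (vacuously) a local homomorphism, while any extension $\hat{h}(v)$ would simultaneously satisfy $b\to\hat{h}(v)$ and $a_1\to\hat{h}(v)$, forcing $f(\hat{h}(v))\in\{c_1\}\cap\{c_2\}=\emptyset$. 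For \ref{rt2}, suppose $a_1,a_2,a_3\in f^{-1}(w_0)$ are distinct. Using that $S(2)$ contains a $C_3$ disjoint from $w_0$ and that some $w'$ dominates $w_0$, one obtains $b_1,b_2,b_3$ forming a $C_3$ in $S(2)[f]$ and a vertex $v$ with $v\to a_i$ for all $i$. The map $h\colon a_i\mapsto b_i$ is a (vacuously) local homomorphism, but any extension $\hat{h}(v)$ would have to dominate the $C_3$ $\{b_1,b_2,b_3\}$, whence $f(\hat{h}(v))$ would dominate a $C_3$ in $S(2)$---impossible, since the out-neighbourhood of any vertex of $S(2)$ is an open half-circle and therefore induces a transitive subtournament.

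The chief difficulty lies in the backward direction of \ref{rt2}, which I expect to be the main obstacle. Assume $|f^{-1}(w)|\le 2$ for all $w\in V(S(2))$ and suppose, for contradiction, that there is a minimal witness $(A,B,h,v)$ in $S(2)[f]$. Because the minimal forbidden tournaments for $\Age(S(2))$ are precisely $I_1\to C_3$ and $C_3\to I_1$, Lemma~\ref{lem:minwit} gives $|A|=|B|=3$ and $B$ inducing a $C_3$, while Lemma~\ref{lem:minwit}\ref{it4} ensures $A$ is not a tournament. Since no single fibre of $f$ contains all three $a_i$ (fibres have size at most two), exactly two of them, say $a_1,a_2$, lie in a common fibre $f^{-1}(u)$, while $a_3\in f^{-1}(u')$ with $u\neq u'$. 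Writing $B=\{b_1,b_2,b_3\}$ with $b_1\to b_2\to b_3\to b_1$, the constraint that $h$ is a local homomorphism forces the only consistent orientations $a_2\to a_3$ and $a_3\to a_1$, yielding simultaneously $u\to u'$ and $u'\to u$ in $S(2)$---a contradiction to the antisymmetry of the arc relation. The dual case $\widehat{B}=C_3\to I_1$ is handled symmetrically, by the same fibre-counting and orientation-forcing argument. The delicate point is verifying that the $(\ast)$-correspondence indeed pins down the orientations of $v$ against the $a_i$ tightly enough for this collapse to occur.
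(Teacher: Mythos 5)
Your overall strategy coincides with the paper's: both reduce everything to Lemma~\ref{lem:minwit} together with the minimal forbidden tournaments for each age, and your items \ref{rt1}, \ref{rt3} and the ``only if'' half of \ref{rt2} are correct (your witness for \ref{rt1} differs cosmetically from the paper's, which instead observes that the extended image would induce the transitive triangle, not in $\Age(C_3)$; both work). The genuine gap is in the ``if'' half of \ref{rt2}. From $\widehat{B}\cong\bD$ or $\bD^\ast$ you infer that ``$B$ induces a $C_3$''. That does not follow: the vertex $\hat{v}$ adjoined by Lemma~\ref{lem:minwit} is determined by the witness, and it can be \emph{any} vertex of $\widehat{B}$. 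If $\hat{v}$ is the apex of $\bD$ (resp.\ $\bD^\ast$) then indeed $B\cong C_3$, but if $\hat{v}$ is one of the three cycle vertices then $B$ is the \emph{transitive} $3$-tournament. Your fibre-counting/orientation-forcing argument correctly disposes of the subcase $B\cong C_3$ (the paper likewise shows this subcase cannot occur), but it says nothing about the transitive subcase --- which is the substantive one. Note also that your appeal to duality does not repair this: swapping $\bD$ and $\bD^\ast$ still leaves the transitive-$B$ subcase alive inside each.

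To see that the transitive subcase cannot be dismissed by your $A$-level argument: if $B$ is transitive with $b_1\to b_2\to b_3$, $b_1\to b_3$, and the unique non-adjacent pair of $A$ is $h^{-1}(b_1), h^{-1}(b_2)$ (a pair in one fibre, legal under \eqref{eq:ddag}), then $h$ is a perfectly consistent local homomorphism --- both members of the pair point to $h^{-1}(b_3)$, so no orientation clash arises from $A$ and $h$ alone. The contradiction only appears once $v$ is brought in via the correspondence \eqref{eq:star}: the paper enumerates the three possible configurations induced by $A\cup\{v\}$ (one for each arc of the transitive triangle that may be missing from $A$) and checks that in each of them some non-adjacent pair relates differently to a common neighbour, contradicting the fact that in $S(2)[f]$ non-adjacency means lying in one fibre and arcs between distinct fibres are uniform (Proposition~\ref{ABorBA}). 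This is precisely the ``delicate point'' you flag in your last sentence but do not carry out; without it the backward direction of \ref{rt2} is unproved.
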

\begin{proof}
	\noindent\textbf{About \ref{rt1}.} The ``if''-direction is clear. For the ``only if''-direction, suppose that $f$ is not bijective (and hence it is not injective). Let $a_1,a_2\in V(C_3[f])$ be distinct, such that $f(a_1)=f(a_2)$. Let $v\in V(C_3[f])$, such that $v\to\{a_1,a_2\}$. Let $b_1,b_2\in V(C_3[f])$, such that $b_1\to b_2$. Let $A:=\{a_1,a_2\}$, $B:=\{b_1,b_2\}$, $h\colon A\to B$ be given by $h\colon a_i\mapsto b_i$ ($i=1,2$), then $(A,B,h,v)$ is a witness. Indeed, if $\hat{h}$ is an extension of $h$ to $A\cup\{v\}$, then the image of $\hat{h}$ induces:
	\[
	    \begin{tikzpicture}[scale=\scf,baseline=-8pt]
        \GraphInit[vstyle=Classic]
        \SetVertexCircle
        \SetupArcStyle
        \SetVertexLabel
 		\SetVertexMath
  		\Vertex[a=0*120-30,d=0.5,L={b_2}]{b2}
   		\Vertex[a=1*120-30,d=0.5,L={\hat{h}(v)}]{v}
   		\Vertex[a=2*120-30,d=0.5,L={b_1},Lpos=180]{b1}
		\Edge(v)(b1)
		\Edge(v)(b2)
		\Edge(b1)(b2)
    \end{tikzpicture}
	\] 
	which is not in the age of $C_3[f]$. It follows that $C_3[f]$ is not homomorphism homogeneous.
	
	\noindent\textbf{About \ref{rt2}.} Concerning the ``if''-part, suppose that \eqref{eq:ddag} holds but that $S(2)[f]$ is not homomorphism homogeneous. Then there has to exist a minimal witness $(A,B,h,v)$. By Lemma~\ref{lem:minwit}, $B$ induces a tournament, $h$ is bijective, $A$ does not induce a tournament, and $\widehat{B}$ is a minimal forbidden substructure for $\Age(S(2))$. It is well-known that $\Age(S(2))$ consists of all such finite tournaments that do not contain either of the following two configurations (see \cite[page 18]{Cher1998}):
	 \[
    \begin{tikzpicture}[scale=\scf,baseline=-22pt]
        \GraphInit[vstyle=Classic]
        \SetVertexCircle
        \SetupArcStyle
        \SetVertexNoLabel
   		\Vertex[x=0,y=-0.75]{b}
   		\Vertex[x=1,y=0]{c}
   		\Vertex[x=-1,y=0]{a}
   		\Vertex[x=0,y=0.75]{d}
        \Edge(d)(a)
        \Edge(d)(b)
        \Edge(d)(c)
        \Edges(a,b,c,a)
        \node at (0,-1.2) {$\bD$};
    \end{tikzpicture}\qquad
    \begin{tikzpicture}[scale=\scf,baseline=-22pt]
        \GraphInit[vstyle=Classic]
        \SetVertexCircle
        \SetupArcStyle
        \SetVertexNoLabel
   		\Vertex[x=0,y=-0.75]{b}
   		\Vertex[x=1,y=0]{c}
   		\Vertex[x=-1,y=0]{a}
   		\Vertex[x=0,y=0.75]{d}
        \Edge(a)(d)
        \Edge(b)(d)
        \Edge(c)(d)
        \Edges(a,b,c,a)
        \node at (0,-1.2) {$\bD^\ast$};
    \end{tikzpicture}\label{forbS2}
  \]
It follows that $\widehat{B}$ is either isomorphic to $\bD$ or to $\bD^\ast$. Let us handle the case  $\widehat{B}\cong \bD$ (the case $\widehat{B}\cong\bD^\ast$ goes analogously): In this case $B$ must be isomorphic either to $C_3$ or to the three-vertex transitive tournament. 

In case that $B\cong C_3$, since $h$ is bijective, $A$ has three vertices and is obtained from $C_3$ by removing at least one arc. However, removing any arcs from $C_3$ leaves us with a configuration not contained in $S(2)[f]$. Thus $B$ must be transitive. Again,  removing more than one arc from $B$ leads to a configuration not contained in $S(2)[f]$. It follows that $A\cup\{v\}$ induces one of the following three configurations:
\[
    \begin{tikzpicture}[scale=\scf,baseline=-22pt]
        \GraphInit[vstyle=Classic]
        \SetVertexCircle
        \SetupArcStyle
        \SetVertexMath
        \SetVertexNoLabel
   		\Vertex[x=0,y=-0.75]{b}
   		\Vertex[x=1,y=0]{c}
   		\Vertex[x=-1,y=0]{a}
		\SetVertexLabel
   		\Vertex[x=0,y=0.75,Lpos=90,L={v}]{d}
        \Edge(a)(c)
        \Edge(b)(c)
        \Edge(d)(b)
        \Edge(a)(d)
        \Edge(c)(d)
    \end{tikzpicture}\qquad
    \begin{tikzpicture}[scale=\scf,baseline=-22pt]
        \GraphInit[vstyle=Classic]
        \SetVertexCircle
        \SetupArcStyle
        \SetVertexMath
        \SetVertexNoLabel
   		\Vertex[x=0,y=-0.75]{b}
   		\Vertex[x=1,y=0]{c}
   		\Vertex[x=-1,y=0]{a}
		\SetVertexLabel
   		\Vertex[x=0,y=0.75,Lpos=90,L={v}]{d}
        \Edge(a)(b)
        \Edge(b)(c)
        \Edge(d)(b)
        \Edge(a)(d)
        \Edge(c)(d)
    \end{tikzpicture}\qquad
        \begin{tikzpicture}[scale=\scf,baseline=-22pt]
        \GraphInit[vstyle=Classic]
        \SetVertexCircle
        \SetupArcStyle
        \SetVertexMath
        \SetVertexNoLabel
   		\Vertex[x=0,y=-0.75]{b}
   		\Vertex[x=1,y=0]{c}
   		\Vertex[x=-1,y=0]{a}
		\SetVertexLabel
   		\Vertex[x=0,y=0.75,Lpos=90,L={v}]{d}
        \Edge(a)(b)
        \Edge(a)(c)
        \Edge(d)(b)
        \Edge(a)(d)
        \Edge(c)(d)
    \end{tikzpicture}
    \]
However, neither of these configurations is contained in $S(2)[f]$. 
But that means that $(A,B,h,v)$ cannot be a witness. In other words, our assumption was wrong and  $S(2)[f]$ is homomorphism homogeneous. 

Concerning the ``only if''-part, suppose that for some $w\in V(S(2))$ we have that $|f^{-1}(w)|\ge 3$. Let $a_1$, $a_2$, and $a_3$ be distinct vertices from $f^{-1}(w)$. Let $v\in V(S(2)[f])$, be such that $v\to\{a_1,a_2,a_3\}$, and let $b_1,b_2,b_3\in V(S(2)[f])$ be such that $b_1\to b_2\to b_3 \to b_1$. Define $A:=\{a_1,a_2,a_3\}$, $B:=\{b_1,b_2,b_3\}$, and $h\colon A\to B$ according to $h(a_i)=b_i$ ($i=1,2,3$). Then $(A,B,h,v)$ is a witness. Clearly, $h$ is a local homomorphism. Moreover, if $h$ could be extended to a local homomorphism  $\hat{h}$ with domain $A\cup\{v\}$  then the image of $\hat{h}$ would necessarily induce a tournament isomorphic to $\bD$, a contradiction. It follows that $|f^{-1}(w)|\le 2$ for all $w\in V(S(2))$.

	\noindent\textbf{About \ref{rt3}.} Suppose that $T^\infty[f]$ is not homomorphism homogeneous. Then it should have a minimal witness $(A,B,h,v)$. But then, by Lemma~\ref{lem:minwit}, $\widehat{B}$ is a tournament not in the age of $T^\infty$, a contradiction. Hence, $T^\infty[f]$ is homomorphism homogeneous.
\end{proof}

Combining Propositions~\ref{acyclicHH},  \ref{CyclicTransitive}, and \ref{prop:back} we finally obtain: 
\begin{theorem}\label{MainHH}
  Let $\Gamma$ be a countable homomorphism homogeneous weakly connected oriented graph. Then $\Gamma$ is one of the following oriented graphs:
    \begin{enumerate}[label=(\arabic*), ref=(\arabic*)]
    \item\label{class1} $I_1$, 
	\item\label{class2} $(\bQ, <)$,
	\item\label{class3} a tree with no minimal elements such that no finite subset of vertices has a maximal lower bound,
	\item\label{class4} a dual tree with no maximal elements such that no finite subset of vertices has a minimal upper  bound,
	\item\label{class5} a poset such that: \begin{itemize}
 \item every finite subset of vertices is bounded from above and from below,
 \item no finite subset of vertices has a maximal lower bound or a minimal upper bound,
 \item no $X_4$-set has a midpoint,	
 \end{itemize}
  \item\label{class6} any extension of the countable universal homogeneous  strict poset,
  \item \label{class7}$C_3$,
  \item \label{class8}$S(2)[f]$ for a surjective function $f$ with kernel classes of size at most $2$,
  \item \label{class9} $T^\infty[f]$ for a surjective function $f$ with a countable domain.
    \end{enumerate}
\end{theorem}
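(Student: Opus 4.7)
The plan is to assemble the theorem from the ingredients already established, by splitting into the two cases according to whether $\Gamma$ contains a directed cycle or not. Let $\Gamma$ be a countable weakly connected homomorphism homogeneous oriented graph.

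First, suppose $\Gamma$ is acyclic. By Proposition~\ref{AcyclicTransitive}, the arc relation $\E(\Gamma)$ is transitive, so $\Gamma$ may be viewed as a strict poset. Since $\Gamma$ is weakly connected and homomorphism homogeneous as an oriented graph, it is also homomorphism homogeneous as a poset. I would then invoke the Cameron--Lockett classification (Proposition~\ref{acyclicHH}) to place $\Gamma$ in one of the classes \ref{class1}--\ref{class6}.

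Next, suppose $\Gamma$ contains a cycle. By Lemma~\ref{alwaysC3}, every induced cycle in $\Gamma$ must be isomorphic to $C_3$; in particular, $\Gamma$ contains $C_3$ as an induced subgraph. Proposition~\ref{CyclicTransitive} then yields that $\Gamma \cong T[f]$ for some $T \in \{C_3, S(2), T^\infty\}$ and some surjective function $f$ with countable domain. To determine for which $f$ the resulting graph $T[f]$ is actually homomorphism homogeneous, I would apply Proposition~\ref{prop:back}. In the case $T = C_3$, part \ref{rt1} forces $f$ to be a bijection, and then $C_3[f] \cong C_3$, yielding class \ref{class7}. In the case $T = S(2)$, part \ref{rt2} requires $|f^{-1}(w)| \le 2$ for every $w \in V(S(2))$, yielding class \ref{class8}. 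In the case $T = T^\infty$, part \ref{rt3} imposes no restriction beyond countability of the domain, yielding class \ref{class9}.

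For the converse direction, I would verify that each member of the list \ref{class1}--\ref{class9} is indeed a countable weakly connected homomorphism homogeneous oriented graph. For \ref{class1}--\ref{class6} this is again the content of Proposition~\ref{acyclicHH}; weak connectedness in classes \ref{class3}--\ref{class6} follows from the boundedness or universal-extension conditions on the posets in question. For \ref{class7}--\ref{class9} the homomorphism homogeneity is the ``if''-direction of Proposition~\ref{prop:back}, and weak connectedness of $T[f]$ (for $T \in \{C_3, S(2), T^\infty\}$) is inherited from the fact that each such $T$ is itself weakly connected with every pair of vertices joined by an arc. Since all the technical work is packaged into the earlier propositions, no real obstacle remains beyond bookkeeping the case split and checking these elementary properties.
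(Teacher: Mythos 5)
Your proposal is correct and follows essentially the same route as the paper, whose proof consists precisely of combining Propositions~\ref{acyclicHH}, \ref{CyclicTransitive}, and \ref{prop:back} via the acyclic/cyclic case split you describe (with Proposition~\ref{AcyclicTransitive} and Lemma~\ref{alwaysC3} feeding into the two cases exactly as you use them). Your additional verification of the converse direction is sound but goes slightly beyond what the paper records, since the ``if''-parts of Proposition~\ref{prop:back} and the Cameron--Lockett classification already cover it implicitly.
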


\section{Weakly connected polymorphism homogeneous oriented graphs}\label{sec3}

We turn now our attention to polymorphism homogeneous oriented graphs, an important subclass of the class of homomorphism homogeneous oriented graphs. 
\begin{definition}\label{def:PH}
	Let $\Gamma$ be an oriented graph. Then the \emph{$n$-th direct power} $\Gamma^n$ of $\Gamma$ is defined by:
	\begin{align*}
		V(\Gamma^n) &:=V(\Gamma)^n,\\
		E(\Gamma^n) &:= \{((v_1,\dots,v_n),(w_1,\dots,w_n))\mid \forall i\in\{1,\dots,n\}\,:\, (v_i,w_i) \in E(\Gamma)\}.
	\end{align*}
	Moreover, $\Gamma$ is called \emph{polymorphism homogeneous} if the $n$-th power of $\Gamma$ is homomorphism homogeneous, for all positive integers $n$.
\end{definition}	

As in the previous section, we will restrict our study to weakly connected oriented graphs, and the disconnected case will be treated separately. Our starting point will be the case of countable polymorphism homogeneous tournaments, that were fully classified in \cite[Theorem 3.10]{FelPecPec20}. 
An immediate consequence of this theorem is:
\begin{corollary}
Let $T$ be a countable tournament. Then, $T$ is polymorphism homogeneous if and only if it is isomorphic to either $I_1$, ${C}_3$, or $(\bQ, <)$.
\end{corollary}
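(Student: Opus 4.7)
The plan is to derive this as a straightforward filtering from the cited classification of countable polymorphism homogeneous tournaments, combined with the facts already in hand about countable homomorphism homogeneous tournaments.

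First I would observe that polymorphism homogeneity implies homomorphism homogeneity (by specialising to $n=1$ in Definition~\ref{def:PH}). By the remark made earlier in the paper, every countable homomorphism homogeneous tournament is homogeneous, so Lachlan's classification (recalled in front of Corollary~\ref{poss_cores}) forces $T$ to be isomorphic to one of $I_1$, $C_3$, $(\bQ,<)$, $S(2)$, or $T^\infty$. Thus only five candidates need to be checked.

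Next I would invoke \cite[Theorem 3.10]{FelPecPec20} to sort these five candidates. That theorem explicitly classifies countable polymorphism homogeneous tournaments, and its content on the positive side is that $I_1$, $C_3$, and $(\bQ,<)$ are polymorphism homogeneous, while on the negative side it rules out both $S(2)$ and $T^\infty$ (by producing, in some finite power, a local homomorphism that cannot be extended). Hence the "only if" direction of the corollary.

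The "if" direction is then merely a restatement of the positive part of \cite[Theorem 3.10]{FelPecPec20}: $I_1$ is trivially polymorphism homogeneous, while the cases of $C_3$ and $(\bQ,<)$ are treated in the cited theorem. I do not anticipate any real obstacle: the whole content of the corollary is packaged inside the reference, and the only thing to verify independently is the reduction from "polymorphism homogeneous tournament" to "one of Lachlan's five", which is immediate from the preceding discussion.
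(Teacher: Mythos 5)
Your proposal is correct and matches the paper's treatment: the paper gives no separate proof, presenting the corollary as an immediate consequence of the classification in \cite[Theorem 3.10]{FelPecPec20}, exactly as you do. Your preliminary reduction via Lachlan's list of homogeneous tournaments is harmless but redundant, since the cited theorem already classifies all countable polymorphism homogeneous tournaments outright.
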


Moving on to non-tournaments we distinguish once more between the cyclic and acyclic among them motivated by the fact that
a countable homomorphism homogeneous oriented graph is transitive if and only if it is acyclic. This allows us again to see countable acyclic non-tournaments as countable strict partially ordered sets. The latter have already been classified with respect to polymorphism homogeneity in~\cite{PecPec15}.

\begin{proposition}[{\cite[Theorem 6.29]{PecPec15}}]
The only countable polymorphism homogeneous acyclic non-tournaments are extensions of the countable universal homogeneous strict poset.
\end{proposition}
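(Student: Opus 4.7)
The plan is to apply the classification of countable polymorphism homogeneous strict posets from \cite[Theorem 6.29]{PecPec15}. Since polymorphism homogeneity implies homomorphism homogeneity (set $n=1$ in Definition~\ref{def:PH}), Proposition~\ref{AcyclicTransitive} tells us that any countable acyclic polymorphism homogeneous oriented graph is transitive, and hence is a strict poset. Within the class of strict posets, the tournaments are exactly the chains, and among countable chains the polymorphism homogeneous ones are precisely $I_1$ and $(\bQ,<)$ by the corollary stated just above. It therefore suffices to strip these two chains from the list of countable polymorphism homogeneous strict posets in \cite[Theorem 6.29]{PecPec15}, and what remains is exactly the class of extensions of the countable universal homogeneous strict poset.

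To reprove \cite[Theorem 6.29]{PecPec15} directly in the present setting, one would take Proposition~\ref{acyclicHH} as the starting point and reduce the task to (a) ruling out classes (3), (4), and (5) as polymorphism homogeneous, and (b) verifying that every graph in class (6) really is polymorphism homogeneous. For (a), the natural strategy is to produce, for each candidate $\Gamma$, a surjective local homomorphism of $\Gamma^2$ that admits no one-point extension. For a tree $\Gamma$ with no minimal elements (class (3)), pick incomparable $a,b\in\VGamma$; in $\Gamma^2$ the pair $(a,b),(b,a)$ admits no common lower bound, because any such lower bound in $\Gamma^2$ would project to a common lower bound of $\{a,b\}$ in $\Gamma$, contradicting the tree assumption. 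A local homomorphism of $\Gamma^2$ whose image forces such a common predecessor then supplies the required witness. Dual trees (class (4)) are dispatched dually, and class (5) is handled by a parallel construction in which the $X_4$-midpoint-free hypothesis on $\Gamma$ translates into an obstruction inside $\Gamma^2$.

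The positive direction (b) exploits the strong amalgamation property of the \Fraisse class of all finite posets: it carries over to finite direct powers, so an arbitrary surjective local homomorphism of any finite power of an extension of the countable universal homogeneous strict poset can be extended one vertex at a time by solving a finite amalgamation problem inside the ambient rich age. The main obstacle in this approach is the construction of sharp witnesses in the second power for classes (3)--(5); class (5) is the most delicate, since the $X_4$-midpoint obstruction must be engineered through the product structure rather than appearing in $\Gamma$ itself.
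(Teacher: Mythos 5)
Your first paragraph is exactly the paper's own justification: the paper proves this proposition purely by the reduction you describe --- polymorphism homogeneity implies homomorphism homogeneity, Proposition~\ref{AcyclicTransitive} then makes the arc relation transitive, so the graph is a strict poset, and the classification is imported wholesale from \cite[Theorem 6.29]{PecPec15}. That part is correct and complete as a proof of the statement, and it is the same approach as the paper, which offers no independent argument beyond the citation.

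However, your supplementary sketch of a direct reproof contains a genuine error in the class~(3) step, and it rests on a misreading of the tree condition. In a tree (every down-set is a chain) the forbidden configuration is a common \emph{upper} bound of two incomparable elements; common \emph{lower} bounds of incomparable pairs are perfectly possible, and the class~(3) hypothesis only excludes \emph{maximal} lower bounds, not lower bounds altogether. Concretely, take $T_2=\bQ\oplus I_2[\bQ]$, which the paper's discussion section confirms lies in class~(3): for $a$, $b$ in the two distinct upper copies of $\bQ$, every element of the bottom copy is a common lower bound of $\{a,b\}$, so $(a,b)$ and $(b,a)$ have plenty of common lower bounds in $T_2^2$ (indeed, for incomparable $a,b$ in any weakly connected HH tree a common lower bound must exist, since by Lemma~\ref{3conf} there is $c$ with $a\sim c\sim b$, and the tree condition forces $c<a$ and $c<b$). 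So the witness you propose does not exist, and the dual construction for class~(4) fails symmetrically (there, incomparable pairs always have common upper bounds, and only \emph{minimal} upper bounds are excluded). The obstruction to polymorphism homogeneity for these classes must be located elsewhere, as in \cite{PecPec15}; likewise your positive direction~(b) is only heuristic, since the finite powers of an extension of the universal homogeneous poset are not themselves posets of the same kind and the amalgamation argument needs the one-point homomorphism extension analysis carried out in \cite{PecPec15} rather than a direct transfer of strong amalgamation. None of this affects the validity of your main, citation-based proof.
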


  It remains to tackle the class of cyclic non-tournaments (recall that by Lemma~\ref{alwaysC3}, every cyclic polymorphism homogeneous oriented graph, if there are any such,  contains $C_3$). In the following we show that this class contains no polymorphism homogeneous oriented graph:
\begin{lemma}\label{KnonPoly}
    If an oriented graph $\Gamma$ contains configuration $\bK$ (see Observation~\ref{S2K}), then $\Gamma$ is not polymorphism homogeneous.
\end{lemma}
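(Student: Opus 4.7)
The plan is to argue by contrapositive. If $\Gamma$ is polymorphism homogeneous, then by Definition~\ref{def:PH} its square $\Gamma^2$ is homomorphism homogeneous (and is easily seen to still be an oriented graph, since $\EGamma$ is asymmetric). By Lemma~\ref{alwaysC3} applied to $\Gamma^2$, every induced oriented cycle of $\Gamma^2$ must be a $C_3$. It therefore suffices to exhibit, from a copy of $\bK$ inside $\Gamma$ on vertices $u, x, v, y$ (with the arcs $u\to x$, $x\to v$, $v\to y$, $y\to u$, $v\to u$, $x\to y$), four vertices of $\Gamma^2$ that induce an oriented $C_4$.

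The key structural feature of $\bK$ that I would exploit is that it contains the spanning directed cycle $u\to x\to v\to y\to u$, whose two remaining (chord) arcs $v\to u$ and $x\to y$ both point from a cycle position to the position two steps further along, in the same rotational direction. I would use this to ``twist'' the Hamilton cycle into $\Gamma^2$ via the pairing
\[
 p_0 := (u,v),\qquad p_1 := (x,y),\qquad p_2 := (v,u),\qquad p_3 := (y,x),
\]
chosen so that the coordinatewise arcs along the Hamilton cycle are preserved while each chord of $\bK$ fails to lift, because it would require an arc between the same pair of vertices in both directions simultaneously.

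The remainder is bookkeeping. A direct check of the six pairs yields: $p_0\to p_1$ (since $u\to x$ and $v\to y$), $p_1\to p_2$ (since $x\to v$ and $y\to u$), $p_2\to p_3$ (since $v\to y$ and $u\to x$), and $p_3\to p_0$ (since $y\to u$ and $x\to v$). For the two remaining pairs, $p_0\sim p_2$ would need an arc between $u$ and $v$ in each coordinate simultaneously, which is impossible by the asymmetry of $\EGamma$; and $p_1\sim p_3$ fails analogously using the asymmetry witnessed by $x\to y$. Therefore $\{p_0,p_1,p_2,p_3\}$ induces a $C_4$ in $\Gamma^2$, contradicting Lemma~\ref{alwaysC3}, and so $\Gamma$ cannot be polymorphism homogeneous.

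I do not anticipate a real obstacle here: the only genuinely creative step is spotting the twisted pairing that converts the Hamilton $4$-cycle together with the matched chord arcs of $\bK$ into an induced $C_4$ in the square. Once that pairing is in hand, the verification reduces to a short finite check of the induced subgraph on four vertices of $V(\Gamma^2)$.
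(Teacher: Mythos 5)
Your proof is correct and takes essentially the same approach as the paper: both pass to $\Gamma^2$ and build, from coordinate-swapped pairs of vertices of $\bK$, a configuration forbidden in homomorphism homogeneous oriented graphs. The paper exhibits only the induced path $(x,y)\to(v,u)\to(y,x)$ with non-adjacent endpoints and cites Lemma~\ref{NoInducedChain} directly --- this is exactly the segment $p_1\to p_2\to p_3$ of your induced $C_4$, so your extra vertex $p_0$ and the appeal to Lemma~\ref{alwaysC3} (itself a corollary of Lemma~\ref{NoInducedChain}) amount to the same argument with slightly more bookkeeping.
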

\begin{proof}
    Let $a,b,c,d\in \VGamma$ be such that they induce in $\Gamma$ a subgraph isomorphic to $\bK$:
\[
    \begin{tikzpicture}[scale=\scf,baseline=-22pt]
        \GraphInit[vstyle=Classic]
        \SetVertexCircle
        \SetupArcStyle
        \SetVertexLabel
		\SetVertexMath
        \Vertex[x=1,y=-0.75,Lpos=-90,Ldist=0,L={a}]{a}
        \Vertex[x=2,y=0,Lpos=0,Ldist=0,L={b}]{b}
        \Vertex[x=1,y=0.75,Lpos=90,Ldist=0,L={c}]{c}
        \Vertex[x=0,y=0,Lpos=180,Ldist=0,L={d}]{d}
        \Edges(a,b,c,d,a)			
        \Edges(d,b)
        \Edges(a,c)
    \end{tikzpicture}
\]
Observe that  $(b,d)\to (c,a)\to (d,b)$ in $\Gamma^2$, because $b\to c\to d$ and $d\to a\to b$ in $\Gamma$, and  that $(b,d)\not\sim(d,b)$ in $\Gamma^2$ (since $b\not\to d$, and hence $(b,d)\not\to(d,b)$, and $(d,b)\not\to(b,d)$). By Lemma \ref{NoInducedChain} we obtain that $\Gamma^2$ is not homomorphism homogeneous. Consequently, $\Gamma$ is not polymorphism homogeneous. 
\end{proof}
\begin{proposition}
    If $\Gamma$ is a weakly connected polymorphism homogeneous oriented graph that contains an oriented cycle, then $\Gamma\cong C_3$.
\end{proposition}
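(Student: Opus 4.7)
The plan is to combine the structural classification from Proposition~\ref{CyclicTransitive} with the non--polymorphism-homogeneity obstruction provided by Lemma~\ref{KnonPoly}. First, since $\Gamma$ is polymorphism homogeneous it is in particular homomorphism homogeneous, and since it is weakly connected and contains an oriented cycle, a shortest oriented cycle in $\Gamma$ is necessarily induced; by Lemma~\ref{alwaysC3} this shortest induced cycle must be isomorphic to $C_3$. Thus $\Gamma$ contains $C_3$, and Proposition~\ref{CyclicTransitive} applies, yielding $\Gamma \cong T[f]$ for some $T\in\{C_3,S(2),T^\infty\}$ and some surjective $f$.

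Next I would rule out the cases $T=S(2)$ and $T=T^\infty$. The key remark is that $T$ embeds as an induced subgraph of $T[f]$: since $f$ is surjective, choosing one representative from each fiber $f^{-1}(v)$ produces a set of vertices that induces a tournament isomorphic to $T$. Now $S(2)$ contains the configuration $\bK$ by Observation~\ref{S2K}, and $T^\infty$, being the universal homogeneous tournament, contains every finite tournament and in particular contains $\bK$. So in either case $T[f]$ contains $\bK$ as an induced subgraph, and Lemma~\ref{KnonPoly} implies that $T[f]$ is not polymorphism homogeneous, contradicting the hypothesis on $\Gamma$.

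Hence $T=C_3$, so $\Gamma \cong C_3[f]$. Since $\Gamma$ is polymorphism homogeneous it is also homomorphism homogeneous, and Proposition~\ref{prop:back}\ref{rt1} then forces $f$ to be a bijection, whence $\Gamma\cong C_3$, as required.

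There is no real obstacle here: the previous results do all the heavy lifting, and the only small step that needs to be made explicit is the induced embedding $T\hookrightarrow T[f]$ afforded by surjectivity of $f$, which lets us transfer the forbidden configuration $\bK$ from the core tournament to the full graph.
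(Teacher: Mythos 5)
Your proof is correct and follows essentially the same route as the paper's: reduce via Proposition~\ref{CyclicTransitive} to $\Gamma\cong C_3[f]$, $S(2)[f]$, or $T^\infty[f]$, then eliminate the latter two by transferring the configuration $\bK$ from the core tournament into $T[f]$ and invoking Lemma~\ref{KnonPoly}. You are merely more explicit than the paper in two harmless spots: that a shortest oriented cycle is induced (so Lemma~\ref{alwaysC3} yields a copy of $C_3$), and that the case $C_3[f]$ collapses to $C_3$ via Proposition~\ref{prop:back}\ref{rt1}.
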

\begin{proof}
  Proposition~\ref{CyclicTransitive} provides us with our only potential candidates for $\Gamma$. These are $\Gamma\cong C_3$, $\Gamma\cong S(2)[f]$, or $\Gamma\cong T^\infty[f]$ for suitable $f$.

If $\Gamma\cong C_3$, then nothing needs to be proved.
 
 Since both $S(2)$ and $T^\infty$, contain configuration $\bK$ (see Observation~\ref{S2K}), so does $\Gamma$. Thus neither $S(2)[f]$ nor $T^\infty[f]$ is polymorphism homogeneous, by Lemma~\ref{KnonPoly}.
\end{proof}
The results of this section are summed up in the following theorem:
\begin{theorem}\label{mainphcon}
Let $\Gamma$ be a finite or countably infinite polymorphism homogeneous weakly connected oriented graph. Then $\Gamma$ is one of the following oriented graphs:
    \begin{enumerate}
    \item $I_1$, 
    \item $C_3$,
	\item $(\bQ, <)$,
  \item an extension of the countable universal homogeneous  strict poset.
    \end{enumerate}    
\end{theorem}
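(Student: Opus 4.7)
The plan is to observe that Theorem~\ref{mainphcon} is essentially an assembly result: the heavy lifting has already been done in the earlier corollaries and propositions of Section~\ref{sec3}, and the statement is obtained by combining them according to a trichotomy. So the proof is mostly bookkeeping, and the key task is to make sure the case split is exhaustive.

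First, I would observe that every polymorphism homogeneous oriented graph is homomorphism homogeneous (take $n=1$ in Definition~\ref{def:PH}). Hence, if $\Gamma$ is a countable weakly connected polymorphism homogeneous oriented graph, it falls into one of the nine classes of Theorem~\ref{MainHH}. The strategy is then to split according to three mutually exclusive cases that together exhaust these classes: (a) $\Gamma$ is a tournament, (b) $\Gamma$ is acyclic and not a tournament, and (c) $\Gamma$ contains an oriented cycle and is not a tournament.

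In case (a), the corollary stated immediately after Definition~\ref{def:PH} (which specialises \cite[Theorem 3.10]{FelPecPec20} to tournaments) gives that $\Gamma$ is isomorphic to $I_1$, $C_3$, or $(\bQ,<)$. In case (b), the observation that a countable HH oriented graph is transitive if and only if it is acyclic (via Proposition~\ref{AcyclicTransitive}) lets us view $\Gamma$ as a strict poset, and then the cited \cite[Theorem 6.29]{PecPec15} identifies $\Gamma$ as an extension of the countable universal homogeneous strict poset. In case (c), by Lemma~\ref{alwaysC3} the graph $\Gamma$ must contain $C_3$, and then the proposition preceding Theorem~\ref{mainphcon} forces $\Gamma\cong C_3$.

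Collecting the outcomes from the three cases yields exactly the list in the statement. The only point that requires care is to verify that cases (a), (b), (c) really cover every class from Theorem~\ref{MainHH}; classes \ref{class1}--\ref{class2} and \ref{class7} lie in (a), classes \ref{class3}--\ref{class6} lie in (b), and classes \ref{class8}--\ref{class9} lie in (c). I do not anticipate a real obstacle: the substantive work has been carried out in Lemma~\ref{KnonPoly} and the final proposition of the section, where the existence of the configuration $\bK$ inside $S(2)$ and $T^\infty$ is used to pass from $\Gamma^2$ to a forbidden induced directed path via Lemma~\ref{NoInducedChain}; here we only need to cite and assemble.
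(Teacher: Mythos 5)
Your proposal is correct and follows essentially the same route as the paper: Theorem~\ref{mainphcon} is indeed just the assembly of the section's three ingredients (the corollary of \cite[Theorem 3.10]{FelPecPec20} for tournaments, the proposition citing \cite[Theorem 6.29]{PecPec15} for acyclic non-tournaments, and the proposition preceding the theorem, resting on Proposition~\ref{CyclicTransitive} and Lemma~\ref{KnonPoly}, for the cyclic case), combined via the same trichotomy. Your extra check that the trichotomy exhausts the classes of Theorem~\ref{MainHH} is harmless but not needed, since the case split is on $\Gamma$ itself rather than on the classes.
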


\section{Disconnected homomorphism homogeneous oriented graphs}\label{sec4}
We continue our study of homomorphism homogeneous oriented graphs by considering the disconnected case. Our first observation is:
\begin{proposition}\label{ll_wcc}
If $\Gamma$ is a homomorphism homogeneous disconnected oriented graph, then all of its weakly connected components are tournaments.
\end{proposition}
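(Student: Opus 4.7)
The plan is to argue by contradiction: assume that some weakly connected component $C$ of $\Gamma$ fails to be a tournament, then leverage a vertex from a different component via homomorphism homogeneity to force two distinct components to collapse into one.

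More concretely, suppose $C$ is a weakly connected component of $\Gamma$ that is not a tournament. Then there exist $u,v\in C$ with $u\not\sim v$, yet $u\sim^\ast v$ within $C$. Choosing a shortest semi-walk between two non-adjacent vertices of $C$ (exactly as in the proof of Lemma~\ref{3conf}) yields vertices $a,b,c\in C$ with $a\sim b\sim c$ and $a\not\sim c$; Lemma~\ref{NoInducedChain} then rules out the two directed-path orientations, so $\{a,b,c\}$ induces either $\bL_1$ or $\bL_2$ inside $C$. Since $\Gamma$ is disconnected, I can fix a vertex $w$ lying in some other weakly connected component $D\neq C$; in particular $a\not\sim w$ and $c\not\sim w$.

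The main step is now to build a local homomorphism that glues $C$ and $D$ together. Treat the case of $\bL_1$, so that $a\to b\gets c$; the $\bL_2$ case is symmetric. Since $a\not\sim c$, the set $\{a,c\}$ induces the edgeless oriented graph in $\Gamma$, and since $a\not\sim w$, the map $f\colon\{a,c\}\to\VGamma$ given by $f(a)=a$, $f(c)=w$ is a local homomorphism of $\Gamma$. By homomorphism homogeneity, $f$ extends to an endomorphism $\hat{f}$ of $\Gamma$. Applying $\hat{f}$ to the arcs $a\to b$ and $c\to b$ gives
\[
a=\hat{f}(a)\to\hat{f}(b)\quad\text{and}\quad w=\hat{f}(c)\to\hat{f}(b),
\]
so $\hat{f}(b)$ is adjacent (in the semi-walk sense) both to $a\in C$ and to $w\in D$. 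Hence $a\sim^\ast w$, contradicting the assumption that $C$ and $D$ are distinct weakly connected components.

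I do not anticipate any real obstacle here: the only decision to make is which local homomorphism to extend, and the choice $(a,c)\mapsto(a,w)$ is forced by the need to (i) use that $a\not\sim c$ to ensure the initial map is a local homomorphism and (ii) subsequently use the vertex $b$ of the $\bL_1$/$\bL_2$ configuration to propagate a semi-walk from one component to another. The argument works uniformly for $\bL_1$ and $\bL_2$ by swapping the roles of $\hat{f}(a),\hat{f}(c)$ with $\hat{f}(b)$ as the ``incoming'' versus ``outgoing'' witness of adjacency.
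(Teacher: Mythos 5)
Your proof is correct and takes essentially the same route as the paper: extend a local homomorphism defined on a non-adjacent pair that sends one vertex into a different weakly connected component, then derive a forbidden cross-component semi-walk. The paper does this more directly --- it maps $(x,y)\mapsto(z,y)$ for any pair with $x\sim^\ast y$ and $x\not\sim y$ and uses that endomorphisms preserve semi-walks, so your detour through the shortest-semi-walk argument and the $\bL_1$/$\bL_2$ case analysis (indeed, even the appeal to Lemma~\ref{NoInducedChain}) is unnecessary: arcs map to arcs, so any orientation of $a\sim b\sim c$ already yields $a\sim\hat{f}(b)\sim w$.
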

\begin{proof}
  Assume the opposite, that there exists a non-edge within some weakly connected component. Thus there exist $x, y \in \VGamma$ such that
  $x \sim^\ast y$, but $x \not\sim y$. Because $\Gamma$ is disconnected, there exists  $z \in \VGamma$ that is not in the same weakly connected component as $x$ and $y$.

  Consider the map $f = \left( \begin{smallmatrix}
                               x & y \\
                               z & y
                             \end{smallmatrix} \right)$.
  Clearly it is a local homomorphism of $\Gamma$. Due to $\Gamma$'s homomorphism homogeneity, there exists $\hat{f} \in \End(\Gamma)$ which extends $f$.
  Therefore, $z = f(x) = \hat{f}(x) \sim^\ast \hat{f}(y) = f(y) = y$, which is a contradiction.
\end{proof}
Next, let us examine weakly connected components of homomorphism homogeneous oriented graphs in more detail.

\begin{lemma}\label{wccHH}
Weakly connected components of a homomorphism homogeneous disconnected oriented graph $\Gamma$ are also homomorphism homogeneous.
\end{lemma}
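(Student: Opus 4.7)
The plan is to take a weakly connected component $C$ of $\Gamma$ and show directly that any local homomorphism $f\colon A \to C$ of $C$ (with $A$ a finite subset of $\V(C)$) extends to an endomorphism of $C$. The strategy is to first extend $f$ to an endomorphism $\hat{f}$ of $\Gamma$ using the homomorphism homogeneity of the ambient graph, and then argue that $\hat{f}$ necessarily sends $C$ into $C$, so that $\hat{f}\restr_{\V(C)}$ is the endomorphism we need.

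The case $A=\emptyset$ is trivial (the identity on $C$ works), so we may assume $A\neq\emptyset$ and fix some $a_0\in A$. Since $f$ is a local homomorphism of $C$, it is also a local homomorphism of $\Gamma$. By the homomorphism homogeneity of $\Gamma$, there exists $\hat{f}\in\End(\Gamma)$ with $\hat{f}\restr_A = f$. The key observation is the following: any endomorphism of an oriented graph preserves the semi-walk relation $\sim^\ast$. Thus for every $c\in \V(C)$ we have $c\sim^\ast a_0$ in $\Gamma$, which forces $\hat{f}(c)\sim^\ast \hat{f}(a_0) = f(a_0) \in \V(C)$. Since $\V(C)$ is an entire $\sim^\ast$-class, it follows that $\hat{f}(c)\in \V(C)$.

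Consequently $\hat{f}[\V(C)]\subseteq \V(C)$, and $g:=\hat{f}\restr_{\V(C)}$ is an endomorphism of the induced subgraph $C$ extending $f$. This shows that $C$ is homomorphism homogeneous. I do not expect any real obstacle; the entire content of the argument is the remark that homomorphisms cannot separate vertices that are joined by a semi-walk, so anchoring $\hat{f}$ at a single vertex of $A\subseteq \V(C)$ keeps the image of $C$ inside $C$.
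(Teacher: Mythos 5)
Your proof is correct and takes essentially the same route as the paper: extend $f$ to some $\hat{f}\in\End(\Gamma)$ using the homomorphism homogeneity of $\Gamma$, then restrict $\hat{f}$ to the component $C$. The only difference is that the paper dismisses the containment $\hat{f}[\V(C)]\subseteq \V(C)$ with ``clearly,'' while you justify it explicitly (homomorphisms preserve $\sim^\ast$, so anchoring at $a_0\in A$ with $f(a_0)\in\V(C)$ traps the image of $C$ in $C$) and also handle the empty-domain case, which the paper sidesteps by assuming $f$ non-trivial.
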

\begin{proof}
  Let  $C$ be  a weakly connected component of $\Gamma$, and let $f$ be a non-trivial local homomorphism of $C$.
  Then $f$ is also a local homomorphism of $\Gamma$. Thus, there exists $\hat{f} \in \End(\Gamma)$ which extends $f$. Clearly, $\hat{f}[C]\subseteq C$. Thus $\hat{f}\restr_C$ is an endomorphism of $C$ that extends $f$. 
\end{proof}

\begin{proposition}\label{wcc=Ages}
If $\Gamma$ is a homomorphism homogeneous disconnected oriented graph, then all of its weakly connected components have the same age.
\end{proposition}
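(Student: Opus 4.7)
The plan is to show that if $C_1$ and $C_2$ are any two weakly connected components of $\Gamma$, then every finite configuration in $\Age(C_1)$ also lies in $\Age(C_2)$ (and by symmetry the reverse), hence $\Age(C_1)=\Age(C_2)$.

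First I would fix a finite induced subgraph $T$ of $C_1$; by Proposition~\ref{ll_wcc} the component $C_1$ is a tournament, so $T$ is a finite tournament. Pick any vertex $v\in\V(T)$ and any vertex $z\in C_2$. The singleton map $f=\begin{psmallmatrix} v \\ z\end{psmallmatrix}$ is trivially a local homomorphism of $\Gamma$ (its domain carries no arcs), so by homomorphism homogeneity it extends to some $\hat f\in\End(\Gamma)$. Because $\hat f$ preserves the relation $\sim$, it also preserves $\sim^\ast$, and therefore $\hat f[C_1]\subseteq C_2$, since $v\in C_1$ is mapped into $C_2$.

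Next I would verify that the restriction $\hat f\restr_T$ is an embedding of $T$ into $C_2$. Injectivity is the key point: if $x,y\in\V(T)$ with $x\neq y$ had $\hat f(x)=\hat f(y)$, then because $T$ is a tournament we have either $x\to y$ or $y\to x$ in $\Gamma$, forcing a loop at $\hat f(x)$ in $\Gamma$ — impossible since $\Gamma$ is loopless (an oriented graph). Once injectivity is established, the fact that $\hat f$ is a homomorphism between tournaments immediately upgrades to an isomorphism onto its image: every arc of $T$ is preserved, and since the image is an induced tournament on $|T|$ vertices, no extra arcs can appear. Hence $T\in\Age(C_2)$.

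Swapping the roles of $C_1$ and $C_2$ gives the reverse inclusion, so $\Age(C_1)=\Age(C_2)$. There is no real obstacle here; the only subtlety worth flagging is that the argument genuinely relies on two of the standing hypotheses — disconnectedness (to supply a vertex $z$ in a different component) and looplessness (to get injectivity of $\hat f\restr_T$ for free from the tournament structure of the components).
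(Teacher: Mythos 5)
Your proposal is correct and follows essentially the same route as the paper: extend a singleton local homomorphism $C_1\to C_2$ via homomorphism homogeneity, note that the extension maps $C_1$ into $C_2$ because endomorphisms preserve semi-walks, and use the tournament structure of the components (Proposition~\ref{ll_wcc}) to upgrade the restriction to an embedding. The only difference is cosmetic --- you restrict to a finite subtournament $T$ and spell out the injectivity argument (looplessness plus $x\sim y$ for distinct vertices) that the paper compresses into ``since $C_1$ is a tournament, $\hat{f}\restr_{C_1}$ is an embedding.''
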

\begin{proof}
  Let $C_1$ and $C_2$ be two different weakly connected components of $\Gamma$. Take any $x \in C_1$ and $y \in C_2$. Consider the following local homomorphism
  $f = \left( \begin{smallmatrix}
         x \\
         y
       \end{smallmatrix} \right)$. By the homomorphism homogeneity of $\Gamma$, there exists $\hat{f} \in \End(\Gamma)$ which extends $f$. 
       Clearly, $\hat{f}[C_1]\subseteq C_2$. Since $C_1$ is a tournament, $\hat{f}\restr_{C_1}$ is an embedding. In other words, $C_1$ embeds into $C_2$.
        Analogously it can be shown that $C_2$ embeds into $C_1$. Consequently, $\Age(C_1) = \Age(C_2)$.
\end{proof}
\begin{remark}
    In the following, if an oriented graph $\Gamma$ has exactly $k$ weakly connected components each of which induces a subgraph isomorphic to a given oriented graph $\Delta$, then we denote $\Gamma$ also by $k\cdot \Delta$. 
\end{remark}
\begin{theorem}\label{wccsametour}
Let $\Gamma$ be a countable disconnected oriented graph. Then $\Gamma$ is homomorphism homogeneous if and only if all of its weakly connected components are isomorphic to the same
homogeneous tournament. In particular, it is isomorphic to one of the following oriented graphs:
    \begin{enumerate}
        \item $k\cdot I_1$,
        \item $k\cdot C_3$,
        \item $k\cdot (\bQ,<)$,
        \item $k\cdot S(2)$,
        \item $k\cdot T^{\infty}$,
    \end{enumerate}
    for some $1<k\le\omega$.
\end{theorem}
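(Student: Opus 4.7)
For the ``only if'' direction, I would argue as follows. Assume $\Gamma$ is a countable disconnected homomorphism homogeneous oriented graph. By Proposition~\ref{ll_wcc}, every weakly connected component of $\Gamma$ is a tournament, and by Lemma~\ref{wccHH} each such component is itself a homomorphism homogeneous oriented graph. A countable homomorphism homogeneous tournament is homogeneous (as noted just before Corollary~\ref{poss_cores}), so every component of $\Gamma$ must be isomorphic to one of $I_1$, $C_3$, $(\bQ,<)$, $S(2)$, or $T^\infty$. By Proposition~\ref{wcc=Ages}, any two components of $\Gamma$ have the same age. Since the five countable homogeneous tournaments on the list have pairwise distinct ages, and since the age of a countable homogeneous structure determines it up to isomorphism (by \Fraisse's theorem), all components of $\Gamma$ are isomorphic to the same tournament $T\in\{I_1,C_3,(\bQ,<),S(2),T^\infty\}$, so $\Gamma\cong k\cdot T$ for some $1<k\le\omega$.

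For the ``if'' direction, let $T$ be any of the five countable homogeneous tournaments and let $\Gamma \cong k\cdot T$ for some $1<k\le\omega$. I would verify directly that $\Gamma$ is homomorphism homogeneous. Let $f\colon A\to B$ be a local homomorphism of $\Gamma$ with $A,B$ finite. The crucial observation is that since each weakly connected component of $\Gamma$ is a tournament, any two distinct vertices lying in a common component of $\Gamma$ are connected by an arc; hence $f$ sends them to adjacent (in particular, connected) vertices of $\Gamma$, so $f$ maps every component of $\Gamma$ meeting $A$ into a single component of $\Gamma$. Fix an isomorphism between each component of $\Gamma$ and $T$. For each component $C$ of $\Gamma$ meeting $A$, the restriction $f\restr_{A\cap C}$ may thus be identified with a local homomorphism (indeed a local isomorphism, as $A\cap C$ induces a subtournament of $T$) of $T$. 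Since $T$ is homogeneous, it is homomorphism homogeneous, so each $f\restr_{A\cap C}$ extends to an endomorphism of $T$, and this yields an extension $\hat{f}_C$ of $f\restr_{A\cap C}$ sending $C$ into the appropriate target component. On components of $\Gamma$ not meeting $A$, define $\hat{f}$ to be any fixed isomorphism onto some chosen component (for instance, the identity). Gluing these partial maps together produces an endomorphism $\hat{f}\in\End(\Gamma)$ extending $f$, as required.

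The argument is mostly an assembly of results already established earlier in the paper, so there is no deep obstacle. The only point requiring mild care is the verification that the five countable homogeneous tournaments have pairwise distinct ages (so that equality of ages in the ``only if'' direction forces isomorphism); this is however immediate from the Lachlan--Woodrow list, since $I_1$ and $C_3$ are finite with trivially distinct ages, $(\bQ,<)$ forbids $C_3$ while $S(2)$ and $T^\infty$ contain it, and $S(2)=\Forb\{\bD,\bD^\ast\}$ is a proper subage of $\Age(T^\infty)$. With this in hand, the two directions of the theorem close off cleanly.
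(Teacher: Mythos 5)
Your proposal is correct and takes essentially the same route as the paper: your ``only if'' direction assembles Proposition~\ref{ll_wcc}, Lemma~\ref{wccHH}, Proposition~\ref{wcc=Ages}, and \Fraisse's theorem exactly as the paper does, and your ``if'' direction is the paper's component-by-component argument (extend each restriction $f\restr_{A\cap C}$, which is a local isomorphism since components are tournaments without loops, to an isomorphism between components via homogeneity, act as the identity on components missing $\dom(f)$, and glue). Your extra verification that the five homogeneous tournaments have pairwise distinct ages is harmless but unnecessary, since \Fraisse's theorem already yields that any two homogeneous tournaments with equal ages are isomorphic.
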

\begin{proof}
  Assume, at first, that $\Gamma$ is homomorphism homogeneous. Proposition~\ref{ll_wcc} together with Lemma~\ref{wccHH} imply that all weakly connected components of $\Gamma$
  are homomorphism homogeneous tournaments. 
  Recall that a countable tournament is homomorphism homogeneous if and only if it is homogeneous. 
  Now, combining Proposition~\ref{wcc=Ages} with \Fraisse's Theorem,  we come to the conclusion that all weakly connected components of $\Gamma$ are isomorphic to one and the same homogeneous tournament.

  Consider now the opposite direction, assuming that all of $\Gamma$'s weakly connected components are isomorphic to the same homogeneous tournament. Take any local homomorphism $f$ of $\Gamma$.
  Let $C_1,\dots,C_m$ be all connected components of $\Gamma$ such that $\dom (f)\cap C_i\neq \emptyset$  for each $i \in \{1, 2, \dots, m\}$. Each time denote $\dom (f)\cap  C_i$ by $A_i$.
  In particular, 
  \[
  \dom(f) = A_1 \dotcup A_2 \dotcup \dots \dotcup A_m.
  \]
  Define $B_i := f[A_i]$, for all $i \in \{1, 2, \dots, m\}$. 
  Further define $f_i := f\restr_{A_i} \colon A_i \to B_i$, for all $i \in \{1, 2, \dots, m\}$. Notice how each $B_i$ is fully contained within some weakly connected component $D_i$ of $\Gamma$. Now  $C_i$ and $D_i$ are isomorphic homogeneous tournaments for each $i$, so we may use \Fraisse's Theorem in order to conclude that there exists an isomorphism $\hat{f}_i \colon C_i\to  D_i$ which extends $f_i$.

  Finally, we define the following extension of $f$ on $\Gamma$:
  \[
    \hat{f}(x) := 
      \begin{cases}
        \hat{f}_i(x), & \mbox{if } x \in C_i,\, i \in \{1, 2, \dots, m\} \\
        x, & \mbox{otherwise}.
      \end{cases}
 \]
  Clearly, $\hat{f}$ is an endomorphism. Thus $\Gamma$ is homomorphism homogeneous.
\end{proof}

\section{Disconnected polymorphism homogeneous oriented graphs}\label{sec5}
Now that we know all homomorphism homogeneous disconnected oriented graphs, let us see which of them are polymorphism homogeneous.

\begin{proposition}\label{disconn wcc PH}
All weakly connected components of a disconnected polymorphism homogeneous oriented graph are pairwise isomorphic homogeneous polymorphism homogeneous tournaments.
\end{proposition}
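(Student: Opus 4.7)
The plan is to extract most of the statement directly from Theorem~\ref{wccsametour} applied to $\Gamma$ viewed as a homomorphism homogeneous oriented graph, and then to bootstrap polymorphism homogeneity of $\Gamma$ into polymorphism homogeneity of each of its components via a retraction argument. Since every polymorphism homogeneous oriented graph is in particular homomorphism homogeneous, Theorem~\ref{wccsametour} will immediately yield that the weakly connected components of $\Gamma$ are pairwise isomorphic to one and the same countable homogeneous tournament, which I will denote by $C$. Thus the remaining content of the proposition is to show that $C$ itself is polymorphism homogeneous.

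To do so I would fix an arbitrary $n\ge 1$ and a local homomorphism $f\colon A\to B$ of $C^n$, and try to extend it to an endomorphism of $C^n$. Regarding $C$ as a concrete weakly connected component of $\Gamma$, the set $C^n$ is an induced subgraph of $\Gamma^n$, so $f$ is also a local homomorphism of $\Gamma^n$. As $\Gamma$ is polymorphism homogeneous, $\Gamma^n$ is homomorphism homogeneous, and hence $f$ extends to some $\hat f\in\End(\Gamma^n)$. The obvious obstacle is that $\hat f$ need not map $C^n$ into $C^n$, so in general $\hat f\restr_{C^n}$ is not an endomorphism of $C^n$; overcoming this is the main point of the argument.

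My remedy is a fold-back: I would build a homomorphism $\phi\colon \Gamma\to C$ that is the identity on the fixed copy of $C$, and post-compose $\hat f$ with $\phi^n$. Concretely, for each weakly connected component $D$ of $\Gamma$ pick an isomorphism $\phi_D\colon D\to C$, making the explicit convention $\phi_C=\mathrm{id}_C$ for the distinguished copy of $C$; the union of the $\phi_D$ is a well-defined homomorphism $\phi\colon\Gamma\to C$ acting as the identity on $C$. Then $\phi^n\colon\Gamma^n\to C^n$ is a homomorphism acting as the identity on $C^n$, so $\phi^n\circ\hat f$ maps $\Gamma^n$ into $C^n$, and its restriction to $C^n$ is an endomorphism of $C^n$. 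For every $a\in A$ one has $\hat f(a)=f(a)\in B\subseteq C^n$, hence
\[
    \phi^n(\hat f(a))=\phi^n(f(a))=f(a),
\]
so this endomorphism extends $f$. Since $n$ was arbitrary, $C^n$ is homomorphism homogeneous for every $n$, which is exactly what it means for $C$ to be polymorphism homogeneous.

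The only subtle point I anticipate is the stipulation $\phi_C=\mathrm{id}_C$: without it, $\phi^n$ would alter the image of $f$ and the composition would fail to extend $f$. Beyond that the argument is purely formal and uses nothing specific about the five candidates $I_1$, $C_3$, $(\bQ,<)$, $S(2)$, $T^\infty$ for $C$, nor any input from Section~\ref{sec3} beyond $\Gamma^n$ being homomorphism homogeneous.
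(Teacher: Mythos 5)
Your proposal is correct and takes essentially the same route as the paper: both deduce from Theorem~\ref{wccsametour} that the components are pairwise isomorphic homogeneous tournaments, extend a local homomorphism of $C^n$ inside $\Gamma^n$ using polymorphism homogeneity of $\Gamma$, and then fold the extension back into $C^n$ by post-composing with the $n$-th power of a retraction $\Gamma\to C$ assembled from componentwise isomorphisms that acts as the identity on the distinguished component. The subtlety you flag (requiring $\phi_C=\mathrm{id}_C$) is handled identically in the paper's definition of~$\varphi$.
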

\begin{proof}
    Let $\Gamma$ be a disconnected polymorphism homogeneous oriented graph with weakly connected components $T_0,T_1,\dots,T_m$. By Theorem~\ref{wccsametour} all these components are pairwise isomorphic homogeneous tournaments. For each $i\in\{1,\dots,m\}$, let $\varphi_i\colon T_i\to T_0$ be an isomorphism. Fix a positive natural number $n$ and consider a local homomorphism $f$ of $T^n_{0}$. Then $f$ is also a local homomorphism of $\Gamma^n$. Since $\Gamma^n$ is homomorphism homogeneous, there exists $\bar{f}\in\End(\Gamma^n)$ that extends $f$. Let 
    \[
    \varphi\colon \VGamma\to \VGamma\qquad x\mapsto\begin{cases}
      \varphi_i(x) & \text{if } x\in T_i,\, i\in\{1,\dots,m\},\\
      x & \text{if } x\in T_0.
    \end{cases}
    \]
    Obviously, $\varphi\in\End(\Gamma)$. Consequently, 
    \[
    \varphi^n\colon \V(\Gamma^n)\to \V(\Gamma^n)\qquad (x_1,\dots,x_n)\mapsto (\varphi(x_1),\dots,\varphi(x_n))
    \]
    is an endomorphism of $\Gamma^n$. Finally, we define 
    $\hat{f}\coloneqq \varphi^n \circ\bar{f}$. Note that $\hat{f}$ also extends $f$, but its image is completely contained in $T_0^n$. Thus $\hat{f}\restr_{T_0^n}$ is an endomorphism of $T_0^n$ that extends $f$. Hence, $T_0$ is polymorphism homogeneous. 
\end{proof}

\begin{lemma}\label{kQnotPH}
For all $1<k\leq \omega$ we have that $k \cdot (\bQ, <)$ is not polymorphism homogeneous.
\end{lemma}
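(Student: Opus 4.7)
The plan is to show that already the second direct power $\Gamma^2$, for $\Gamma = k\cdot(\bQ,<)$ with $2\le k\le\omega$, fails to be homomorphism homogeneous; this yields non-polymorphism-homogeneity of $\Gamma$ directly from Definition~\ref{def:PH}. The strategy is not to hunt for a concrete witness inside $\Gamma^2$, but rather to identify the weakly connected components of $\Gamma^2$ and then invoke the rigid structural constraint on HH disconnected graphs provided by Theorem~\ref{wccsametour}.

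Write $\Gamma = T_1 \dotcup T_2 \dotcup\cdots$, where each $T_i\cong(\bQ,<)$. First I would argue that the weakly connected components of $\Gamma^2$ are exactly the sets $T_i\times T_j$. For the ``$\subseteq$'' direction, note that an arc $((a,b),(c,d))\in \E(\Gamma^2)$ requires $(a,c)\in\EGamma$ and $(b,d)\in\EGamma$, so $a$ and $c$ must lie in a common $T_i$ and $b,d$ in a common $T_j$; hence semi-walks cannot cross between different products $T_i\times T_j$. For ``$\supseteq$'', any two vertices $(a,b),(a',b')\in T_i\times T_j$ admit a common dominator: choose $c\in T_i$ strictly above both $a$ and $a'$, and $d\in T_j$ strictly above both $b$ and $b'$ (possible as each $T_i$ is an unbounded dense chain); then $(a,b)\to(c,d)\leftarrow(a',b')$ in $\Gamma^2$. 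Consequently $\Gamma^2$ has at least $k^2\ge 4$ weakly connected components, each isomorphic as an oriented graph to the coordinate-wise product $(\bQ,<)^2$.

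To conclude, observe that $(\bQ,<)^2$ is \emph{not} a tournament, since e.g.\ $(0,1)$ and $(1,0)$ are incomparable in the product order, hence non-adjacent in the corresponding oriented graph. If $\Gamma^2$ were homomorphism homogeneous, then being disconnected and countable it would satisfy the hypotheses of Theorem~\ref{wccsametour}, forcing each of its weakly connected components to be isomorphic to one and the same homogeneous \emph{tournament} from the Lachlan--Woodrow list --- a contradiction. Therefore $\Gamma^2$ is not HH, and so $\Gamma = k\cdot(\bQ,<)$ is not polymorphism homogeneous.

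I do not expect any real obstacle in this argument: the only non-trivial step is the (routine) identification of the weakly connected components of $\Gamma^2$ as the products $T_i\times T_j$, after which Theorem~\ref{wccsametour} delivers the conclusion at once. No inspection of a minimal witness or delicate extension argument is needed.
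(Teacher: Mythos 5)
Your proof is correct, but it takes a genuinely different route from the paper's. The paper exhibits an explicit witness inside $(k\cdot(\bQ,<))^2$: with $a\to b\to c$ in one weakly connected component $C_1$ and $d$ in another component $C_2$, the local homomorphism sending $(a,b)\mapsto(a,b)$ and $(b,a)\mapsto(a,d)$ cannot be extended to the common dominator $(c,c)$, since the image of $(c,c)$ would need its second coordinate simultaneously above $b$ and above $d$, which lie in distinct components. You instead determine the weakly connected components of the square --- correctly identified as the products $T_i\times T_j\cong(\bQ,<)^2$, using that arcs act coordinatewise (so semi-walks cannot leave a product) and that any two pairs admit a common strict upper bound in each coordinate (so each product is weakly connected) --- and then invoke the structural theory of disconnected HH oriented graphs: since $(\bQ,<)^2$ contains non-adjacent pairs such as $(0,1)$ and $(1,0)$, the components are not tournaments, contradicting Theorem~\ref{wccsametour}. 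Two small remarks: the weaker Proposition~\ref{ll_wcc} (components of a disconnected HH oriented graph are tournaments) already suffices, which would make the dependence lighter; and the appeal is not circular, since Section~\ref{sec4} precedes Lemma~\ref{kQnotPH} and is proved independently of it. As for what each approach buys: the paper's witness argument is local and self-contained, needing nothing beyond Definition~\ref{def:PH} and Lemma~\ref{NoInducedChain}-style reasoning; yours is more conceptual, reuses the Section~\ref{sec4} classification machinery, and as a by-product pins down the full component structure of $(k\cdot(\bQ,<))^2$. Morally the two are close, since the paper's non-extendability also rests on $b$ and $d$ lying in different components --- precisely the obstruction your component analysis isolates.
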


\begin{proof} Fix  $k \geq 2$, and assume the opposite, that $k\cdot (\bQ,<)$ is polymorphism homogeneous. Thus there exist two different weakly connected components $C_1$ and $C_2$,
and vertices $a, b, c \in C_1$ and $d \in C_2$ such that $a \to b \to c$. Observe that then $(a,b)\to (c,c)\gets (b,a)$, and consider the local homomorphism of $(k\cdot (\bQ, <))^2$:
\[
  f\colon \begin{aligned}
      (a,b) &\mapsto  (a,b) \\
      (b,a) &\mapsto  (a,d) 
  \end{aligned}\qquad 
 \begin{tikzpicture}[scale=\scf,baseline=-4pt]
        \GraphInit[vstyle=Classic]
        \SetVertexCircle
        \SetupArcStyle
        \SetVertexLabel
		\SetVertexMath
        \Vertex[x=0,y=0,Lpos=-90,Ldist=0,L={(a,b)}]{x1}
        \Vertex[x=1,y=0.75,Lpos=90,Ldist=0,L={(c,c)}]{x2}
        \Vertex[x=2,y=0,Lpos=-90,Ldist=0,L={(b,a)}]{x3}
        \Vertex[x=4,y=0,Lpos=-90,Ldist=0,L={(a,d).}]{x5}
        \Edges(x1,x2)
        \Edges(x3,x2)
        \tikzset{>={Straight Barb[scale=0.8]}}
		\tikzset{EdgeStyle/.style = {|->,thick}}
        \tikzset{|/.tip={Bar[width=.8ex]}}
        \tikzset{every loop/.style={min distance=15mm,looseness=10}}
        \draw[EdgeStyle,shorten >=0.3cm,shorten <=0.3cm] (x1) to [out=120, in=180,loop] ();
        \draw[EdgeStyle,shorten >=0.3cm,shorten <=0.3cm](x3) edge [bend left=45] (x5);
  \end{tikzpicture}
\]
Since $k\cdot (\bQ, <)$ is polymorphism homogeneous, it follows that $(k\cdot (\bQ, <))^2$ is homomorphism homogeneous, so $f$ can be extended to $\bar{f}\in \End((k\cdot (\bQ, <))^2)$. But then $(a,b)=\bar{f}(a,b)\to \bar{f}(c,c)\gets \bar{f}(b,a)=(a,d)$, but this cannot be satisfied, since $b$ and $d$ belong to distinct weakly connected components, and so we arrive at a contradiction.

This implies that $(k \cdot (\bQ, <))^2$ is not homomorphism homogeneous nor is $k \cdot (\bQ, <)$ polymorphism homogeneous.
\end{proof}

\begin{theorem}\label{mainphdiscon}
The only countable polymorphism homogeneous disconnected oriented graphs, up to isomorphism, are:
\begin{enumerate}
  \item $k \cdot I_1$ and  
  \item $k \cdot C_3$,
\end{enumerate}
for any $1 < k \le \omega$.
\end{theorem}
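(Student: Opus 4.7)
The plan is to combine the earlier results to pin down the candidates, and then verify sufficiency by a direct computation with powers.

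For the ``only if'' direction, suppose $\Gamma$ is countable, disconnected, and polymorphism homogeneous. Proposition~\ref{disconn wcc PH} says all weakly connected components of $\Gamma$ are pairwise isomorphic homogeneous polymorphism homogeneous tournaments; the classification of countable polymorphism homogeneous tournaments quoted at the beginning of Section~\ref{sec3} identifies these with $I_1$, $C_3$, or $(\bQ,<)$. Together with Theorem~\ref{wccsametour}, this forces $\Gamma$ to be isomorphic to one of $k\cdot I_1$, $k\cdot C_3$, or $k\cdot(\bQ,<)$ for some $1<k\le\omega$. The last case is excluded by Lemma~\ref{kQnotPH}.

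For the ``if'' direction I need to verify that for every positive integer $n$, the powers $(k\cdot I_1)^n$ and $(k\cdot C_3)^n$ are homomorphism homogeneous. The general observation is that the $n$-th direct power (in the sense of Definition~\ref{def:PH}) of a disjoint union splits as a disjoint union: if $\Delta=\bigsqcup_{j\in J}\Delta_j$, then any arc of $\Delta^n$ joining $(v_1,\dots,v_n)$ and $(w_1,\dots,w_n)$ forces $v_i$ and $w_i$ to lie in the same component of $\Delta$ for every $i$, so the weakly connected components of $\Delta^n$ refine the partition indexed by $J^n$. Applied to $\Delta=k\cdot I_1$ this yields $(k\cdot I_1)^n\cong k^n\cdot I_1$, and Theorem~\ref{wccsametour} closes that case. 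Applied to $\Delta=k\cdot C_3$ it yields $(k\cdot C_3)^n\cong k^n\cdot C_3^n$, and the task reduces to understanding $C_3^n$.

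The one computation worth singling out, and the main technical step of the argument, is the observation that $C_3^n$ is itself a disjoint union of $3^{n-1}$ copies of $C_3$. Labelling $V(C_3)=\{0,1,2\}$ with arc $i\to i+1\bmod 3$, the unique out-neighbour of $(x_1,\dots,x_n)$ in $C_3^n$ is $(x_1+1,\dots,x_n+1)$ and its unique in-neighbour is $(x_1-1,\dots,x_n-1)$; hence each weakly connected component is a $3$-cycle, indexed by the residues $x_i-x_1\bmod 3$ for $i\ge 2$. Consequently $(k\cdot C_3)^n\cong (k^n\cdot 3^{n-1})\cdot C_3$, which is a disjoint union of copies of the homogeneous tournament $C_3$ and is therefore homomorphism homogeneous by Theorem~\ref{wccsametour}. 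No serious obstacle is expected; the only content beyond invoking the earlier results is this structural decomposition of $C_3^n$.
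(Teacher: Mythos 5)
Your proposal is correct and takes essentially the same route as the paper: candidates from Proposition~\ref{disconn wcc PH} together with the classification of countable polymorphism homogeneous tournaments, elimination of $k\cdot(\bQ,<)$ via Lemma~\ref{kQnotPH}, and sufficiency via the isomorphisms $(k\cdot C_3)^n\cong(k^n\cdot 3^{n-1})\cdot C_3$ and $(k\cdot I_1)^n\cong k^n\cdot I_1$. The only difference is that you actually verify these isomorphisms (the component-wise splitting of powers of disjoint unions and the unique in-/out-neighbour argument showing $C_3^n\cong 3^{n-1}\cdot C_3$), which the paper states without proof; your verification is sound.
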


\begin{proof}
    By Proposition~\ref{disconn wcc PH} and \cite[Theorem 3.10]{FelPecPec20} the only candidates for weakly connected components are $I_1$, $C_3$, and $(\bQ,<)$. Lemma~\ref{kQnotPH} rules out $(\bQ,<)$. As for $I_1$ and $C_3$ we note that
  for any $n > 0$
  \[\left.\begin{array}{rcl}
            \left(k \cdot {C}_3 \right)^n &\cong& (k^n \cdot 3^{n-1}) \cdot {C}_3 \\
             & & \\
            (k \cdot I_1)^n  &\cong& k^n \cdot I_1
          \end{array}\right\}\;\; \text{are homomorphism homogeneous}.\]
  Thus $k \cdot C_3$ and  $k \cdot I_1$ are indeed polymorphism homogeneous, for any $1 < k\le \omega$.
\end{proof}

\section{Discussion}\label{discussions}
	
	Let us continue with a comparison of our results with Cherlin's classification of homogeneous oriented graphs. The nature of this discussion is going to be less formal than the rest of the paper. 
	
	A first observation is that a countable disconnected oriented graph is homomorphism homogeneous if and only if it is homogeneous (see Theorem~\ref{wccsametour} and \cite[Catalog 2]{Cher1998}).
	
	In the classification of countable weakly connected HH oriented graphs (Theorem~\ref{MainHH}), only a couple of homogeneous oriented graphs appear,  namely $I_1$ (class~\ref{class1}), $\bQ$ (class~\ref{class2}), $\mathcal{P}$ (the countable universal homogeneous strict poset, class~\ref{class6}), $\bQ[I_n]$ (class \ref{class6}), $C_3$ (class~\ref{class7}), $S(2)$, $S(2)[I_2]$ (class~\ref{class8}), $T^\infty[I_n]$ (class~\ref{class9}), where $1\le n\le\omega$. This accounts for only countably many of the homogeneous oriented graphs from Cherlin's classification. In particular, none of the  Henson digraphs is homomorphism homogeneous. On the other hand, no proper member of classes~\ref{class3}, \ref{class4}, and \ref{class5} is homogeneous.
	
	From the homogeneous HH connected oriented graphs mentioned above only $S(2)$, $S(2)[I_2]$, and $T^\infty[I_n]$ are \emph{not} polymorphism homogeneous (see Theorem~\ref{mainphcon}), and in the disconnected case, only $I_k$ and $k\cdot C_3$ \emph{are} polymorphism homogeneous (see Theorem~\ref{mainphdiscon}).
	
	An important difference between the classification of homogeneous oriented graphs and the classification of HH oriented graphs lies in the importance (or nonimportance) of ages. Namely, by \Fraisse's theorem there is a one-to-one correspondence between amalgamation classes of finite oriented graphs and isomorphism classes of countable homogeneous oriented graphs. This is vastly different for HH oriented graphs. Here we only have that two HH oriented  graphs of the same age are homomorphism equivalent. In fact,  two countable HH oriented graphs are homomorphism equivalent if and only if their homogeneous cores are isomorphic. It follows that there are just five homomorphism equivalence classes of countable HH oriented graphs (see Corollary~\ref{poss_cores}). Table~\ref{tab1}  gives an overview of the number of distinct ages and of pairwise non-isomorphic HH oriented graphs in each class from Theorem~\ref{MainHH}.
	\begin{table}[t]
	\begin{tabular}{@{}lrr@{}} \toprule
	& \# of ages & maximum \# of isomorphism classes per age\\ \midrule
	class \ref{class1}  & 1 & 1 \\
	class \ref{class2} & 1 & 1 \\
	class \ref{class3} & $\aleph_0$ & $\mathfrak{c}$ \\
	class \ref{class4} & $\aleph_0$ & $\mathfrak{c}$\\
	class \ref{class5} & $\mathfrak{c}$ & $\mathfrak{c}$\\
	class \ref{class6} & $\mathfrak{c}$ & $\mathfrak{c}$\\
	class \ref{class7} & 1 & 1 \\
	class \ref{class8} & $\aleph_0$ & $\mathfrak{c}$\\
	class \ref{class9} & $\mathfrak{c}$ & $\mathfrak{c}$\\\bottomrule\addlinespace[1ex]
	\end{tabular}
	\caption{Some details concerning the oriented graphs from Theorem~\ref{MainHH}}\label{tab1}
	\end{table}
	In the following we give additional explanations concerning the data in Table~\ref{tab1}. 
	
	\subsection*{About class~\ref{class3}:} The age of a tree from class~\ref{class3} consists of finite forests. It is known that the class of finite forests is well-quasi-ordered (wqo), when equipped with the embedding quasi-order (forests are $N$-free posets and finite $N$-free posets are wqo, see  \cite[Theorem 3.19]{Pou24},  \cite[Theorem 4]{Dam90}). It follows that every age of forests is determined by a finite set of forbidden subforests. Hence there are at most countably many ages of forests. To see that there are exactly $\aleph_0$ different ages of HH trees,   recall that the \emph{width} of a forest $(F,<)$ is the supremum of the lengths of all antichains in $F$. Clearly, if $F'$ is a forest that embeds into $F$, then the width of $F'$ is smaller or equal to the width of $F$. Now, for every $n\in\omega\setminus\{0\}$ we define $T_n$ to be the ordinal sum of $\bQ$ with $n$ disjoint copies of $\bQ$ (formally, $T_n = \bQ\oplus I_n[\bQ]$). Then $T_n$ has width $n$ and hence each element of $\Age(T_n)$ has width at most $n$. Moreover, some elements of $\Age(T_n)$ have width equal to $n$. It is not hard to see that $T_n$ belongs to class~\ref{class3}. Moreover, all the $T_n$ have distinct ages.
	
	Next we show that there are continuum many pairwise non-isomorphic countable HH trees whose age consists of all finite forests. For this 
	we need  to recall the notion of ramification orders: A \emph{ramification point} in a forest $(F,<)$ is an initial segment $X$ of $F$ that is linearly ordered. The \emph{ramification order} of $X$ is defined to be the number of equivalence classes of the equivalence relation $\asymp_X$ on the set $Y$ of strict upper bounds of $X$ in $F$, given by:
	\[ a\asymp_X b  :\iff \exists y\in Y\,:\, y\le a,b.\]
	Clearly, the set of ramification orders of a tree is an isomorphism invariant. Let us consider now the  tree $\mathfrak{B}$ that consists of all finite words over the alphabet $\omega$, and $w_1$ is below $w_2$ whenever $w_1$ is a prefix of $w_2$. For an arbitrary sequence $\overline{m}=(m_n)_{n<\omega}$ of positive integers, consider the subtree $\mathfrak{B}_{\overline{m}}$ induced by all   those words $w=a_0\dots a_{|w|-1}$ such that for all $2k< |w|$ we have that the $a_{2k}\in\{0,\dots, m_k-1\}$, and let $T$ be the tree obtained from $\mathfrak{B}_{\overline{m}}$ by replacing every element with a copy of $\bQ$. Then  $T$ is in class~\ref{class3}, and  the set of ramification orders of $T$ is equal to $M:=\{m_n\mid n<\omega\}\cup\{1,\omega\}$.  Moreover, the age of $T$ is equal to the class of all finite forests. This accounts for continuum many pairwise non-isomorphic HH trees, all of the same age. 
	
	Class~\ref{class4} is dual to class~\ref{class3} and does not require any further comment.
	\subsection*{About class~\ref{class5}} Each poset from class~\ref{class5} is the (not necessarily disjoint) union of a tree from  class~\ref{class3} and a dual tree from  class~\ref{class4}. Indeed, if $(P,<)$ is from class~\ref{class5}, then we may define $T:=\{x\in P\mid x{\downarrow} \text{ is a chain}\}$ and $D:=\{y\in P\mid y{\uparrow}\text{ is a chain}\}$. Then $T$ induces a tree from class~\ref{class3} and $D$ induces a dual tree from class~\ref{class4} in $P$. If there was a point $m\in P$ that is neither in $T$, nor in $D$, then $m$ would be the midpoint of some $X_4$-set. 
	
	Note that a finite poset is in the age of some poset from class~\ref{class5} if and only if the poset $\bX_5$ (see the picture on page~\pageref{X5}) does not embed into it. Let us denote the class of finite $\bX_5$-free posets by $\mathcal{C}$. With the same argument as above, every member of $\cC$ is the union of a finite forest with a finite dual forest. Given, that finite forests and finite dual forests are wqo with respect to embeddings it comes perhaps as a small surprise that $\cC$ is not. To see this, consider the antichain $(\bA_n)_{n<\omega}$ of $\bX_5$-free posets given in Figure~\ref{fig:An}. In order to obtain an uncountable family of ages of HH structures of class~\ref{class5}, we proceed with the following steps:
	\begin{figure}[t]
		\[
    	\begin{tikzpicture}[scale=\scf,baseline=0]
        	\GraphInit[vstyle=Classic]
        	\SetVertexCircle		
			\SetVertexLabel
	   		\Vertex[x=0,y=1,Lpos=125,L={$b_1$}]{b1}
   			\Vertex[x=2,y=1,Lpos=125,L={$b_2$}]{b2}
			\SetVertexNone
   			\Vertex[x=3.4,y=-0.20,Lpos=125,L={$x$}]{x}
   			\Vertex[x=5.5,y=0,Lpos=125,L={$y$}]{y}
			\SetVertexCircle
			\SetVertexLabel
			\Vertex[x=6,y=1,Lpos=125,L={$b_{n+2}$}]{bn2}
			\Vertex[x=8,y=1,Lpos=125,L={$b_{n+3}$}]{bn3}
   			\Vertex[x=-1,y=-1,Lpos=-125,L={$a_1$}]{a1}
   			\Vertex[x=1,y=-1,Lpos=-125,L={$a_2$}]{a2}
   			\Vertex[x=3,y=-1,Lpos=-125,L={$a_3$}]{a3}
   			\Vertex[x=7,y=-1,Lpos=-125,L={$a_{n+3}$}]{an3}
        	\path (x) -- node[auto=false]{\ldots} (y);
			\Edges(a1,b1,a2,b2,a3,x)
			\Edges(y,bn2,an3,bn3)
			\Edge(a1)(bn3)   		
    \end{tikzpicture}		
		\]
		\caption{The posets $\bA_n$}\label{fig:An}
	\end{figure}
	\begin{enumerate}
		\item For every subset $\sA\subseteq\omega$ we define $\bA_\sA$ to be the disjoint sum of all $\bA_n$ for which $n\in\sA$.
		\item  Define $\bP_\sA:=\{\bot\}\oplus \bA_\sA\oplus\{\top\}$, and denote its age by $\cC_\sA$.
		\item Note that $\bA_n\in\cC_\sA$ if and only if $n\in\sA$.
		\item Consider the poset $\bP_\sA[\bQ]$ that is obtained from $\bP_\sA$ through substituting each of its elements by a copy of $\bQ$, and denote its age by $\cC_\sA[\bQ]$.
		\item Note that $\bP_\sA[\bQ]$ is homomorphism homogeneous and of class~\ref{class5}.
		\item Observe that for every $1\le n<\omega$ we have that $\bA_n\in\cC_\sA[\bQ]$ if and only if $n\in \sA$.
	\end{enumerate}
	Thus we  constructed continuum many distinct ages of countable HH structures of class~\ref{class5}.
	
	It remains to show that for some age $\cC$ of $\bX_5$-free posets there exists continuum many pairwise non-isomorphic countable HH posets from class~\ref{class5} of  age $\cC$.  To this end, observe that if $T$ is in class~\ref{class3}, then the ordinal sum $\widehat{T}:=T\oplus\bQ$ is in class~\ref{class5}. Moreover, for any two trees $T_1$, $T_2$ from class~\ref{class3},  $T_1\not\cong T_2$ implies  $\widehat{T}_1\not\cong\widehat{T}_2$, and $\Age(T_1)=\Age(T_2)$ implies $\Age(\widehat{T}_1)= \Age(\widehat{T}_2)$.  So the continuum many pairwise non-isomorphic trees $\mathfrak{C}_{\overline{m}}$ constructed above give rise to continuum many pairwise non-isomorphic posets $\widehat{\mathfrak{C}}_{\overline{m}}$ of class \ref{class5}, all of the same age.
	\subsection*{About class~\ref{class6}} 
	According to  \cite[Proposition 4(a)]{CamLoc10}, a countable poset is of class~\ref{class6} if and only if 
	\begin{enumerate}[label=(\roman*), ref=(\roman*)]
		\item\label{p1} every finite subset of vertices is bounded from above and from below,
		\item\label{p2} no finite subset of vertices has a maximal lower bound or a minimal upper bound,	
		 \item every $X_4$-set has a midpoint.
	\end{enumerate}	
	It follows that the intersection of classes~\ref{class6} and \ref{class5} consists of all countable $X_4$-free posets that satisfy \ref{p1} and \ref{p2}. Note that all the class \ref{class5}  posets that were constructed in the previous subsection are $X_4$-free, and hence also in class~\ref{class6}.
	
	\subsection*{About class~\ref{class8}}

  The age of $S(2)$ consists of all finite local orders (recall that a tournament $T$ is called a \emph{local order} if for every $v\in V(T)$ the sets $\{x\mid x\to v\}$ and $\{y\mid v\to y\}$ induce transitive subtournaments in $T$). The class of finite local orders is wqo with respect to embeddings. This essentially follows from a result by Cameron that establishes a one-to-one correspondence (up to isomorphism) between  the class of finite local orders with a distinguished point and the set $\{0,1\}^+$ of non-empty finite words over the   alphabet $\{0,1\}$ (see \cite[Proposition 6.1]{Cam81}). It turns out that this correspondence actually defines an essentially surjective functor from the category of non-empty words over $\{0,1\}$  with subword embeddings to the category of finite non-empty local orders with tournament-embeddings.  In particular, it defines a quasi-order homomorphism from $\{0,1\}^+$ with the subword order to the finite local orders with the embedding quasi-order. By Higman's Lemma (see \cite[Theorem 4.3]{Hig52}), we have that $\{0,1\}^+$ with the subword ordering is well-partially-ordered. It follows that finite local orders with embedding quasi-order are wqo. Let us quickly describe Cameron's correspondence on the geometric model of $S(2)$ (see page~\pageref{s2def}). 
  
   Fix a vertex $\top$ of $S(2)$ (without loss of generality, we may assume that $\top$ is the point on the $y$-axis with coordinates $(0,1)$). Given a word $w=a_1\dots a_n\in\{0,1\}^+$, choose vertices  $\{p_1,\dots,p_n\}$ of $S(2)$ such that $\{p_1,\dots,p_n\}\to\top$ and such that $p_1\to p_2\to\dots\to p_n$ (this is possible because the vertices of $S(2)$ on the semi-circle to the right of  $\top$ induce a subtournament isomorphic to $\bQ$).  Next, for every $i$ with $a_i=1$ exchange $p_i$ by a vertex of $S(2)$ very close to the antipode of $p_i$ (we can go arbitrarily close to this antipode because of the density of $V(S(2))$ in the set of points of the circle). Define a tournament $T_w$ with $V(T_w)=\{1,\dots,n\}$ and with $i\to j$ if $p_i\to p_j$ in $S(2)$. It is important to note that $T_w$ does not depend on the initial choice of the  points $p_1,\dots,p_n$. Note that if $\iota\colon w_1\to w_2$ is a subword-embedding, then the same function $\iota$ embeds $T_{w_1}$ into $T_{w_2}$. Thus the assignment $w\mapsto T_w$ is indeed functorial. The construction for the case  $w=0101$ is illustrated in Figure~\ref{const1}.  
	\begin{figure}[ht]
	    \[
	\begin{tikzpicture}[scale=0.8\scf]
	\node at (-1.5,2) {Step 1};    
	\def\R{1.5}
    \tikzset{EdgeStyle/.style = {->}}
    \tikzset{>=stealth'}		
    \draw[->,line width=0.5] (-1.3*\R,0) -- (1.3*\R,0) node[right] {$x$};
    \draw[->,line width=0.5] (0,-1.3*\R) -- (0,1.4*\R) node[right] {$y$};
    \draw[line width = 0.5] (0,0) circle (\R);
    \node[shape=circle,draw,fill = black,minimum size = 2pt, inner sep = 2pt] (p4) at (67.5:\R){};
    \node (l4) at (67.5:\R+0.25*\R)  {$p_4$};
    \node[shape=circle,draw,fill = white,minimum size = 2pt, inner sep = 2pt] (p3) at (22.5:\R){};
    \node (l3) at (22.5:\R+0.25*\R)  {$p_3$};
    \node[shape=circle,draw,fill = black,minimum size = 2pt, inner sep = 2pt] (p2) at (-22.5:\R){};
    \node (l2) at (-22.5:\R+0.25*\R)  {$p_2$};
    \node[shape=circle,draw,fill = white,minimum size = 2pt, inner sep = 2pt] (p1) at (-67.5:\R){};
    \node (l1) at (-67.5:\R+0.25*\R)  {$p_1$};
    \node (top) at (95:\R+0.15*\R)  {$\top$};
\end{tikzpicture} \quad
	\begin{tikzpicture}[scale=0.8\scf]
	\node at (-1.5,2) {Step 2};    
    \def\R{1.5}
    \tikzset{EdgeStyle/.style = {->}}
    \tikzset{>=stealth'}		
    \draw[->,line width=0.5] (-1.3*\R,0) -- (1.3*\R,0) node[right] {$x$};
    \draw[->,line width=0.5] (0,-1.3*\R) -- (0,1.4*\R) node[right] {$y$};
    \draw[line width = 0.5] (0,0) circle (\R);
    \node (q4) at (67.5:\R){};
    \node[shape=circle,draw,fill = white,minimum size = 2pt, inner sep = 2pt] (p4) at (67.5+180:\R){};
    \node (l4) at (67.5+180:\R+0.25*\R)  {$p_4$};
    \node[shape=circle,draw,fill = white,minimum size = 2pt, inner sep = 2pt] (p3) at (22.5:\R){};
    \node (l3) at (22.5:\R+0.25*\R)  {$p_3$};
    \node (q2) at (-22.5:\R){};
	\node (l2) at (-22.5+180:\R+0.25*\R)  {$p_2$};
    \node[shape=circle,draw,fill = white,minimum size = 2pt, inner sep = 2pt] (p2) at (-22.5+180:\R){};
    \node[shape=circle,draw,fill = white,minimum size = 2pt, inner sep = 2pt] (p1) at (-67.5:\R){};
    \node (l1) at (-67.5:\R+0.25*\R)  {$p_1$};
    \node (top) at (95:\R+0.15*\R)  {$\top$};
    \draw[dotted] (q2)--(p2);
    \draw[dotted] (q4)--(p4);
\end{tikzpicture}
\quad
	\begin{tikzpicture}[scale=0.8\scf]
	\node at (-1.5,2) {Step 3};    
    \def\R{1.5}
    \tikzset{EdgeStyle/.style = {->}}
    \tikzset{>=stealth'}		
    \node (q4) at (67.5:\R){};
    \node[shape=circle,draw,fill = white,minimum size = 2pt, inner sep = 2pt] (p4) at (67.5+180:\R){};
    \node (l4) at (67.5+180:\R+0.25*\R)  {$4$};
    \node[shape=circle,draw,fill = white,minimum size = 2pt, inner sep = 2pt] (p3) at (22.5:\R){};
    \node (l3) at (22.5:\R+0.25*\R)  {$3$};
    \node (q2) at (-22.5:\R){};
	\node (l2) at (-22.5+180:\R+0.25*\R)  {$2$};
    \node[shape=circle,draw,fill = white,minimum size = 2pt, inner sep = 2pt] (p2) at (-22.5+180:\R){};
    \node[shape=circle,draw,fill = white,minimum size = 2pt, inner sep = 2pt] (p1) at (-67.5:\R){};
    \node (l1) at (-67.5:\R+0.25*\R)  {$1$};
    \Edges(p1,p3,p2,p4,p1)
    \Edge(p4)(p3)
    \Edge(p2)(p1)
\end{tikzpicture}
\]
\caption{From the word $w=0101$ to the tournament $T_w$}\label{const1}
\end{figure}
It is not hard to see that this construction is reversible: Starting from a finite subtournament $T$  of $S(2)$, choose a point $\top$ on the circle such that neither $\top$ nor its antipode is a vertex of $T$ (we may assume without loss of generality that $\top$ is on the $y$-axis with coordinates $(0,1)$). The diameter through $\top$ partitions the vertices of $T$ into two subsets $V_0$ and $V_1$ (it can happen that one of these sets is empty), where $V_0$ consists of all vertices on the semicircle right of $\top$ and $V_1$ of all vertices on the semicircle left of $\top$. Replace the vertices in $V_1$ by vertices of $S(2)$ very close to their respective antipodes. This moves them onto the semicircle to the right of $\top$ and defines a natural linear order on  $V_1\cup V_2$. Next the vertices from $V_1$ get assigned the letter $1$ and those from $V_0$ the letter $0$. This defines the desired word $w_T$. It remains to observe that  $T_{w_T}\cong T$. An example of this process is given in Figure~\ref{const2}.
	\begin{figure}[ht]
\[	\begin{tikzpicture}[scale=\scf]
	\node at (-1.5,-2) {$T$};	\node at (-2,2) {Step 1};    
    \def\R{1.5}
    \def\a{-40}
    \tikzset{EdgeStyle/.style = {->}}
    \tikzset{>=stealth'}		
    \draw[->,line width=0.5] (-1.3*\R,0) -- (1.3*\R,0) node[right] {$x$};
    \draw[->,line width=0.5] (0,-1.3*\R) -- (0,1.4*\R) node[right] {$y$};
    \draw[line width = 0.5] (0,0) circle (\R);
    \node (q4) at (67.5+\a:\R){};
    \node[shape=circle,draw,fill = white,minimum size = 2pt, inner sep = 2pt] (p4) at (67.5+180+\a:\R){};
    \node (l4) at (67.5+180+\a:\R+0.25*\R)  {$p_4$};
    \node[shape=circle,draw,fill = white,minimum size = 2pt, inner sep = 2pt] (p3) at (22.5+\a:\R){};
    \node (l3) at (22.5+\a:\R+0.25*\R)  {$p_3$};
    \node (q2) at (-22.5+\a:\R){};
	\node (l2) at (-22.5+180+\a:\R+0.25*\R)  {$p_2$};
    \node[shape=circle,draw,fill = white,minimum size = 2pt, inner sep = 2pt] (p2) at (-22.5+180+\a:\R){};
    \node[shape=circle,draw,fill = white,minimum size = 2pt, inner sep = 2pt] (p1) at (-67.5+\a:\R){};
    \node (q1) at (-67.5+180+\a:\R){};
    \node (l1) at (-67.5+\a:\R+0.25*\R)  {$p_1$};
    \node (top) at (95:\R+0.15*\R)  {$\top$};
    \Edges(p1,p3,p2,p4,p1)
    \Edge(p4)(p3)
    \Edge(p2)(p1)
\end{tikzpicture}\qquad
\begin{tikzpicture}[scale=\scf]
	\node at (-2,2) {Step 2};    
    \def\R{1.5}
    \def\a{-40}
    \tikzset{EdgeStyle/.style = {->}}
    \tikzset{>=stealth'}		
    \draw[->,line width=0.5] (-1.3*\R,0) -- (1.3*\R,0) node[right] {$x$};
    \draw[->,line width=0.5] (0,-1.3*\R) -- (0,1.4*\R) node[right] {$y$};
    \draw[line width = 0.5] (0,0) circle (\R);
    \node (q4) at (67.5+180+\a:\R){};
    \node[shape=circle,draw,fill = black,minimum size = 2pt, inner sep = 2pt] (p4) at (67.5+\a:\R){};
    \node (l4) at (67.5+\a:\R+0.25*\R)  {$p_4$};
    \node[shape=circle,draw,fill = white,minimum size = 2pt, inner sep = 2pt] (p3) at (22.5+\a:\R){};
    \node (l3) at (22.5+\a:\R+0.25*\R)  {$p_3$};
    \node (q2) at (-22.5+180+\a:\R){};
	\node (l2) at (-22.5+\a:\R+0.25*\R)  {$p_2$};
    \node[shape=circle,draw,fill = black,minimum size = 2pt, inner sep = 2pt] (p2) at (-22.5+\a:\R){};
    \node[shape=circle,draw,fill = black,minimum size = 2pt, inner sep = 2pt] (p1) at (-67.5+180+\a:\R){};
    \node (q1) at (-67.5+\a:\R){};
    \node (l1) at (-67.5+180+\a:\R+0.25*\R)  {$p_1$};
    \node (top) at (95:\R+0.15*\R)  {$\top$};
    \draw[dotted] (q2)--(p2);
    \draw[dotted] (q4)--(p4);
	\draw[dotted] (q1)--(p1);
\end{tikzpicture}\]
\caption{From from a finite tournament $T<S(2)$ to the word $w_T=1011$}\label{const2}
\end{figure}
	In this example $V_1=\{p_1,p_2,p_4\}$, $V_0=\{p_3\}$. The resulting order on $V_0\cup V_1$ is  $p_2<p_3<p_4<p_1$ and the resulting word is $w_T=1011$. 
	
	Next we claim that  $\Age(S(2)[I_2])$ is wqo with respect to embeddings, too. For this we need to augment the above described construction by another level of complexity. Instead of using words over the alphabet $\{0,1\}$,  we are going to use the partially ordered alphabet $\Sigma:=\{0,1\}\times\{1,2\}$, where $(i_1,j_1)<(i_2,j_2)$ if and only if $i_1=i_2$ and $(j_1<j_2)$. Again, by Higman's Lemma the set of finite words over  $\Sigma$ is wqo by the word-embedding-quasiorder (this time word-embeddings need to respect the partial order on the alphabet in the sense that the image of every letter must be greater or equal than this letter). The process of coming from a word $w=(a_1,m_1)\dots(a_n,m_n)$ to an oriented graph $\Gamma_w$ goes in two steps: First we take the word $w_1:=a_1\dots a_n$ and construct the tournament $T_{w_1}$ with vertex set $\{1,\dots,n\}$, just as described above. Next we define a function $\chi_w\colon\{1,\dots,n\}\to\{1,2\}$ according to $\chi_w\colon i\mapsto m_i$.
	Finally, $\Gamma_w:= T_w[\chi_w]$. Just as before, the mapping $w\mapsto\Gamma_w$ is functorial (we need to take care that the construction $T_w\mapsto T_w[\chi_w]$ is functorial, but this is not a  problem).  Again it is not hard, starting from an induced oriented subgraph $\Gamma$ of $S(2)[I_2]$, to construct a word $w_\Gamma\in\Sigma^+$ such that $\Gamma_{w_\Gamma}\cong\Gamma$. Instead of describing this process in detail, we illustrate it with  an example, see Figure~\ref{const3}.
	\begin{figure}[ht]
\[	\begin{tikzpicture}[scale=0.8\scf]
	\node at (-1.5,-2) {$\Gamma$};	\node at (-2,2) {Step 1};    
    \def\R{1.5}
    \def\a{-40}
    \tikzset{EdgeStyle/.style = {->}}
    \tikzset{>=stealth'}		
    \draw[->,line width=0.5] (-1.7*\R,0) -- (1.7*\R,0) node[right] {$x$};
    \draw[->,line width=0.5] (0,-1.7*\R) -- (0,1.7*\R) node[right] {$y$};
    \draw[line width = 0.5] (0,0) circle (\R);
    \draw[line width = 0.5] (0,0) circle (1.4*\R);
    \node (q4) at (67.5+\a:\R){};
    \node[shape=circle,draw,fill = white,minimum size = 2pt, inner sep = 2pt] (p4) at (67.5+180+\a:\R){};
    \node[shape=circle,draw,fill = white,minimum size = 2pt, inner sep = 2pt] (p42) at (67.5+180+\a:1.4*\R){};
    \node (l42) at (67.5+180+\a:1.4*\R+0.3*\R)  {$p_{4,2}$};
    \node[inner sep=0pt] (l41) at (67.5+180-20+\a:\R+0.8*\R)  {$p_{4,1}$};
    \draw[dashed] (l41)--(p4);
    \node[shape=circle,draw,fill = white,minimum size = 2pt, inner sep = 2pt] (p3) at (22.5+\a:\R){};
    \node (l3) at (22.5+\a:\R+0.25*\R)  {$p_3$};
    \node (q2) at (-22.5+\a:\R){};
	\node (l2) at (-22.5+180+\a:\R+0.25*\R)  {$p_2$};
    \node[shape=circle,draw,fill = white,minimum size = 2pt, inner sep = 2pt] (p2) at (-22.5+180+\a:\R){};
    \node[shape=circle,draw,fill = white,minimum size = 2pt, inner sep = 2pt] (p1) at (-67.5+\a:\R){};
    \node (q1) at (-67.5+180+\a:\R){};
    \node (l1) at (-67.5+\a:\R+0.25*\R)  {$p_1$};
    \node (top) at (95:1.4*\R+0.15*\R)  {$\top$};
    \Edges(p1,p3,p2,p4,p1)
    \Edges(p2,p42,p1)
    \Edge(p4)(p3)
    \Edge(p42)(p3)
    \Edge(p2)(p1)
\end{tikzpicture}
	\begin{tikzpicture}[scale=0.8\scf]
	\node at (-1.5,-2) {$\Gamma/\raisebox{-0.3ex}{$\!\not\sim$}$};	\node at (-1.4,2.3) {Step 2};    
    \def\R{1.5}
    \def\a{-40}
    \tikzset{EdgeStyle/.style = {->}}
    \tikzset{>=stealth'}		
    \draw[->,line width=0.5] (-1.3*\R,0) -- (1.3*\R,0) node[right] {$x$};
    \draw[->,line width=0.5] (0,-1.3*\R) -- (0,1.4*\R) node[right] {$y$};
    \draw[line width = 0.5] (0,0) circle (\R);
    \node (q4) at (67.5+\a:\R){};
    \node[shape=circle,draw,fill = white,minimum size = 2pt, inner sep = 1pt] (p4) at (67.5+180+\a:\R){$\scriptstyle 2$};
    \node (l4) at (67.5+180+\a:\R+0.3*\R)  {$p_4$};
    \node[shape=circle,draw,fill = white,minimum size = 2pt, inner sep = 1pt] (p3) at (22.5+\a:\R){$\scriptstyle 1$};
    \node (l3) at (22.5+\a:\R+0.3*\R)  {$p_3$};
    \node (q2) at (-22.5+\a:\R){};
	\node (l2) at (-22.5+180+\a:\R+0.3*\R)  {$p_2$};
    \node[shape=circle,draw,fill = white,minimum size = 2pt, inner sep = 1pt] (p2) at (-22.5+180+\a:\R){$\scriptstyle 1$};
    \node[shape=circle,draw,fill = white,minimum size = 2pt, inner sep = 1pt] (p1) at (-67.5+\a:\R){$\scriptstyle 1$};
    \node (q1) at (-67.5+180+\a:\R){};
    \node (l1) at (-67.5+\a:\R+0.3*\R)  {$p_1$};
    \node (top) at (95:\R+0.15*\R)  {$\top$};
    \Edges(p1,p3,p2,p4,p1)
    \Edge(p4)(p3)
    \Edge(p2)(p1)
\end{tikzpicture}
\begin{tikzpicture}[scale=0.8\scf]
	\node at (-1.4,2) {Step 3};    
    \def\R{1.5}
    \def\a{-40}
    \tikzset{EdgeStyle/.style = {->}}
    \tikzset{>=stealth'}		
    \draw[->,line width=0.5] (-1.3*\R,0) -- (1.3*\R,0) node[right] {$x$};
    \draw[->,line width=0.5] (0,-1.3*\R) -- (0,1.4*\R) node[right] {$y$};
    \draw[line width = 0.5] (0,0) circle (\R);
    \node (q4) at (67.5+180+\a:\R){};
    \node[shape=circle,draw,fill = lightgray,minimum size = 2pt, inner sep = 1pt] (p4) at (67.5+\a:\R){$\scriptstyle 2$};
    \node (l4) at (67.5+\a:\R+0.3*\R)  {$p_4$};
    \node[shape=circle,draw,fill = white,minimum size = 2pt, inner sep = 1pt] (p3) at (22.5+\a:\R){$\scriptstyle 1$};
    \node (l3) at (22.5+\a:\R+0.3*\R)  {$p_3$};
    \node (q2) at (-22.5+180+\a:\R){};
	\node (l2) at (-22.5+\a:\R+0.3*\R)  {$p_2$};
    \node[shape=circle,draw,fill = lightgray,minimum size = 2pt, inner sep = 1pt] (p2) at (-22.5+\a:\R){$\scriptstyle 1$};
    \node[shape=circle,draw,fill = lightgray,minimum size = 2pt, inner sep = 1pt] (p1) at (-67.5+180+\a:\R){$\scriptstyle 1$};
    \node (q1) at (-67.5+\a:\R){};
    \node (l1) at (-67.5+180+\a:\R+0.3*\R)  {$p_1$};
    \node (top) at (95:\R+0.15*\R)  {$\top$};
    \draw[dotted] (q2)--(p2);
    \draw[dotted] (q4)--(p4);
	\draw[dotted] (q1)--(p1);
\end{tikzpicture}\]
\caption{From  a finite graph $\Gamma<S(2)[I_2]$ to the word $w_\Gamma=(1,1)(0,1)(1,2)(1,1)$}\label{const3}
\end{figure}
	
	Thus we conclude that $\Age(S(2)[I_2])$ is wqo. 
	This implies that there are at most $\aleph_0$  different ages of HH oriented graphs from class~\ref{class8}. It remains to show that the number of such ages is equal to $\aleph_0$. This follows from the observation that whenever  $\chi_1,\chi_2\colon V(S(2))\to\{1,2\}$ with  $|\chi_1^{-1}(2)| \neq |\chi_2^{-1}(2)|$, then  $\Age(S(2)[\chi_1])\neq\Age(S(2)[\chi_2])$.
	
	Next let us show that there are continuum many pairwise non-isomorphic HH oriented graphs in class~\ref{class8} whose age is equal to $\Age(S(2)[I_2])$. Let $(L,<)$ be a countable linear order.  Consider again the geometric model of $S(2)$ described above with the distinguished vertex $\top$ with the coordinates $(0,1)$ on the $y$-axis. Let $V_1\subset V(S(2))$ such that $V_1$ induces a subtournament isomorphic to $(L,<)$ and such that all points from $V_1$ are  on the quarter-circle of $S(2)$ in the first quadrant. Let $V_2:=V(S(2))\setminus V_1$. Then the age of the subtournament of $S(2)$ induced by $V_2$ is equal to $\Age(S(2))$ (to see this, the construction $w\mapsto T_w$ from above may be used, observing that the $p_i$ may always be chosen from the fourth quadrant of $S(2)$, so that neither of them is in $V_1$). Now define a function $\chi_L\colon V(S(2))\to \{1,2\}$ through 
	\[
	\chi_L\colon v\mapsto
	\begin{cases}
		1 &\text{if } v\in V_1,\\
		2 &\text{else.}
	\end{cases}
	\] 
	Then $\Age(S(2)[\chi_L])=\Age(S(2)[I_2])$. Moreover, if  $A$ is the subtournament of $S(2)[\chi_L]$ that is induced by all those vertices whose  equivalence class  with respect to the non-arc relation  is a singleton, then $A$ is isomorphic to $(L,<)$. Thus, if $(L_1,<)$ and $(L_2,<)$ are  non-isomorphic countable chains, then $S(2)[\chi_{L_1}]\not\cong S(2)[\chi_{L_2}]$. By the Cantor-Bernstein-Theorem (see \cite[\S 6, Theorem I]{Ber05}) there are continuum many isomorphism classes of countable linear orders.
	
	\subsection*{About class~\ref{class9}}
	First let us show that there are continuum many distinct ages of countable HH oriented graphs from class~\ref{class9}. For this we are going to use the infinite antichain $(\fB_i)_{1\le i<\omega}$ of finite tournaments that was discovered by Henson in \cite[pages 497, 498]{Hen72}. For convenience, let us recall the construction: Let $n$ be a positive integer. Define a tournament $\fA_n$ on the vertex set $\{0,1,\dots,n+2\}$ and with arc relation given by:
	\[
		a\to b :\iff (b=a+1)\lor (b+1<a). 
	\] 
	Now $\fB_n$ is obtained from $\fA_n$ by adjoining an additional vertex $\alpha$ and by defining:
	\[
		\alpha\to 0,\,\alpha\to n+2,\,\text{and } \forall \, 0< b<n+2\,:\, b\to\alpha.
	\]
	 What is important for us in the sequel is that for every $n$ we have that $\fB_n$ is strongly connected (in other words, it contains an oriented Hamiltonian cycle).
	
	Let $\tn=(n_j)_{j<\omega}$ be a monotonous sequence of positive integers. Consider the subsequence $(\fB_{n_j})_{j<\omega}$ of $(\fB_i)_{1\le i<\omega}$.  Define a countable tournament $\fT_{\tn}$ with vertex set $\{(j,v)\mid j<\omega,\, v\in V(\fB_{n_j})\}$,  where the arc set is given by
	\[
		(j_1,v_1)\to(j_2,v_2):\iff (j_1< j_2)\lor (j_1=j_2\land v_1\to v_2).
	\]
	Note now that, since $(\fB_i)_{1\le i<\omega}$ is an antichain consisting of  strongly connected tournaments, it holds that
	\[
	\forall i\in\omega\setminus\{0\}\,:\, \bigl[\fB_i\in\Age(\fT_{\tn}) \iff \exists j<\omega\,:\, \fB_i=\fB_{n_j}\bigr].
	\]
	In particular, every change in the sequence $\tn$  leads to a change of the age of $\fT_{\tn}$.
	
	Next we use that $T^\infty$ is universal for countable tournaments and embed $\fT_{\tn}$ into $T^\infty$. For simplicity, let us assume that $\fT_{\tn}$ is actually a subtournament of $T^\infty$. 
	 Consider now the function $\chi_{\tn}\colon V(T^\infty)\to(\omega\cup\{\omega\})\setminus\{0\}$ given by:
	\[
	\chi_{\tn}\colon v\mapsto
	\begin{cases}
		2 & \text{if } v\in V(\fT_{\tn}),\\
		1 & \text{else.}	
	\end{cases}
	\]  
	It remains to observe that 
	\[
	\forall i\in\omega\setminus\{0\}\,:\,\bigl[ \fB_i\in\Age(\fT_{\tn}) \iff \fB_i[I_2]\in\Age(T^\infty[\chi_{\tn}])\bigr].
	\]
	In particular, different choices for $\tn$ lead to different ages of  $T^\infty[\chi_{\tn}]$. Thus, we showed that there are continuum many pairwise distinct ages of HH oriented graphs from class~\ref{class9} (in fact all of them are contained in $\Age(T^\infty[I_2])$).
	
	Next, let us show that there are continuum many pairwise non-isomorphic countable HH oriented graphs whose age is equal to  $\Age(T^\infty[I_\omega])$.
	First note that $V(T^\infty)$ may be partitioned into two subsets $V_1$ and $V_2$, such that $V_1$ induces a subtournament  isomorphic to $T^\infty$ and such that $V_2$ is infinite. To see this, let $\widehat{T}^\infty$ be the tournament obtained from $T^\infty$ by adjoining to it a  vertex $u$ that has arcs to all other vertices of $T^\infty$. Since $T^\infty$ is universal, it has a  subtournament that is isomorphic to $\widehat{T}^\infty$. Without loss of generality, we may assume that  $\widehat{T}^\infty$ is actually a subtournament of $T^\infty$. Then its complement must be infinite (recall that removing finitely many vertices from $T^\infty$ leaves us with a tournament isomorphic to $T^\infty$). Let $V_1:=V(\widehat{T}^\infty)\setminus\{u\}$, and let $V_2:=V(T^\infty)\setminus V_1$. Let $M$ be an infinite subset of $\omega\setminus\{0\}$ and let $f\colon V_2\to M$ be a bijection. Consider the function $\chi_M\colon V(T^\infty)\to \omega\setminus\{0\}$ given by:
	\[
		\chi_M\colon v\mapsto\begin{cases}
			f(v) & \text{if } v\in V_2\\
			\omega & \text{else.}
		\end{cases}
	\]
	Then the set of sizes of equivalence classes of the non-arc relation $\not\sim$ of $T^\infty[\chi_M]$ is equal to $M\dotcup\{\omega\}$. 
	The union of all the infinite equivalence classes induces an oriented subgraph isomorphic to $T^\infty[I_\omega]$ in $T^\infty[\chi_M]$. In particular, $\Age(T^\infty[\chi_M])=\Age(T^\infty[I_\omega])$.  Moreover, it is clear that whenever $M_1\neq M_2$, then $T^\infty[\chi_{M_1}]\not\cong T^\infty[\chi_{M_2}]$. 

\section{Concluding remarks}\label{future}
We have succeeded in classifying the countable homomorphism homogeneous and polymorphism homogeneous oriented graphs with an irreflexive arc-relation. A natural question is whether this classification can be extended to  oriented graphs in which loops are allowed. 

Here, cores do not play a role as any oriented graph with at least one loopy vertex has the trivial core consisting of a single vertex with a loop. However, this does not imply that a classification attempt is out of reach. We have already shown that any countable homomorphism homogeneous oriented graph with at least one loop has either a transitive arc-relation or is isomorphic to $C_3$ with all loops added. 

A complete list of possible minimal witnesses for oriented graphs with transitive, antisymmetric arc-relation was given in \cite{MPPhD}. So instead of using cores, the classification may be approached by stratifying along this list of minimal witnesses. While ambitious, such a task appears to be just on the edge of feasibility.

\section*{Acknowledgements}
We are grateful to the anonymous referees for their  constructive remarks, which greatly  helped to improve  the presentation of the paper and saved us from a serious oversight.


\begin{thebibliography}{10}
\expandafter\ifx\csname url\endcsname\relax
  \def\url#1{\texttt{#1}}\fi
\expandafter\ifx\csname doi\endcsname\relax
  \def\doi#1{\burlalt{doi:#1}{http://dx.doi.org/#1}}\fi
\expandafter\ifx\csname urlprefix\endcsname\relax\def\urlprefix{URL }\fi
\expandafter\ifx\csname href\endcsname\relax
  \def\href#1#2{#2}\fi
\expandafter\ifx\csname burlalt\endcsname\relax
  \def\burlalt#1#2{\href{#2}{#1}}\fi

\bibitem{AraHar19b}
A.~Aranda and D.~Hartman.
\newblock M{B}-homogeneous graphs and some new {HH}-homogeneous graphs.
\newblock {\em Acta Math. Univ. Comenian. (N.S.)}, 88(3):383--387, 2019.
\newblock \urlprefix\url{http://www.iam.fmph.uniba.sk/amuc/ojs/index.php/amuc/article/view/1253}.

\bibitem{AraHar20}
A.~Aranda and D.~Hartman.
\newblock The independence number of {HH}-homogeneous graphs and a classification of {MB}-homogeneous graphs.
\newblock {\em European J. Combin.}, 85:103063, 13, 2020.
\newblock \doi{10.1016/j.ejc.2019.103063}.

\bibitem{Ber05}
F.~Bernstein.
\newblock Untersuchungen aus der {M}engenlehre.
\newblock {\em Math. Ann.}, 61(1):117--155, 1905.
\newblock \doi{10.1007/BF01457734}.

\bibitem{Cam81}
P.~J. Cameron.
\newblock Orbits of permutation groups on unordered sets. {II}.
\newblock {\em J. London Math. Soc. (2)}, 23(2):249--264, 1981.
\newblock \doi{10.1112/jlms/s2-23.2.249}.

\bibitem{CamLoc10}
P.~J. Cameron and D.~C. Lockett.
\newblock {Posets, homomorphisms and homogeneity.}
\newblock {\em Discrete Math.}, 310(3):604--613, 2010.
\newblock \doi{10.1016/j.disc.2009.04.027}.

\bibitem{CamNes06}
P.~J. Cameron and J.~Ne{\v s}et{\v r}il.
\newblock Homomorphism-homogeneous relational structures.
\newblock {\em Combin. Probab. Comput.}, 15(1-2):91--103, 2006.
\newblock \doi{10.1017/S0963548305007091}.

\bibitem{ChaLesZha16}
G.~Chartrand, L.~Lesniak, and P.~Zhang.
\newblock {\em Graphs \& digraphs}.
\newblock Textbooks in Mathematics. CRC Press, Boca Raton, FL, sixth edition, 2016.
\newblock \doi{10.1201/b19731}.

\bibitem{Cher1998}
G.~L. Cherlin.
\newblock The classification of countable homogeneous directed graphs and countable homogeneous {$n$}-tournaments.
\newblock {\em Mem. Amer. Math. Soc.}, 131(621):xiv+161, 1998.
\newblock \doi{10.1090/memo/0621}.

\bibitem{Col20}
T.~D.~H. Coleman.
\newblock Two {F}ra\"{\i}ss\'{e}-style theorems for homomorphism-homogeneous relational structures.
\newblock {\em Discrete Math.}, 343(2):111674, 20, 2020.
\newblock \doi{10.1016/j.disc.2019.111674}.

\bibitem{Dam90}
P.~Damaschke.
\newblock Induced subgraphs and well-quasi-ordering.
\newblock {\em J. Graph Theory}, 14(4):427--435, 1990.
\newblock \doi{10.1002/jgt.3190140406}.

\bibitem{FarJS15}
Z.~Farkasov\'{a} and D.~Jakub\'{\i}kov\'{a}-Studenovsk\'{a}.
\newblock Polymorphism-homogeneous monounary algebras.
\newblock {\em Math. Slovaca}, 65(2):359--370, 2015.
\newblock \doi{10.1515/ms-2015-0028}.

\bibitem{FelPecPec20}
T.~Feller, C.~Pech, and M.Pech.
\newblock The classification of homomorphism homogeneous tournaments.
\newblock {\em European J. Combin.}, 89(103142), 2020.
\newblock \doi{10.1016/j.ejc.2020.103142}.

\bibitem{Fra53}
R.~Fra{\"\i}ss{\'e}.
\newblock Sur certaines relations qui g{\'e}n{\'e}ralisent l'ordre des nombres rationnels.
\newblock {\em C. R. Acad. Sci. Paris}, 237:540--542, 1953.

\bibitem{Hen72}
C.~W. Henson.
\newblock Countable homogeneous relational structures and {$\aleph _{0}$}-categorical theories.
\newblock {\em J. Symbolic Logic}, 37:494--500, 1972.

\bibitem{Hig52}
G.~Higman.
\newblock Ordering by divisibility in abstract algebras.
\newblock {\em Proc. London Math. Soc. (3)}, 2:326--336, 1952.
\newblock \doi{10.1112/plms/s3-2.1.326}.

\bibitem{Hod97}
W.~Hodges.
\newblock {\em A shorter model theory}.
\newblock Cambridge University Press, Cambridge, 1997.

\bibitem{IliMasRaj08}
A.~Ili{\'c}, D.~Ma{\v s}ulovi{\'c}, and U.~Rajkovi{\'c}.
\newblock Finite homomorphism-homogeneous tournaments with loops.
\newblock {\em J. Graph Theory}, 59(1):45--58, 2008.
\newblock \doi{10.1002/jgt.20322}.

\bibitem{Lac84}
A.~H. Lachlan.
\newblock Countable homogeneous tournaments.
\newblock {\em Trans. Amer. Math. Soc.}, 284(2):431--461, 1984.
\newblock \doi{10.2307/1999091}.

\bibitem{Mac11}
D.~Macpherson.
\newblock A survey of homogeneous structures.
\newblock {\em Discrete Math.}, 311(15):1599--1634, 2011.
\newblock \doi{10.1016/j.disc.2011.01.024}.

\bibitem{Mas07}
D.~Ma{\v s}ulovi{\'c}.
\newblock Homomorphism-homogeneous partially ordered sets.
\newblock {\em Order}, 24(4):215--226, 2007.
\newblock \doi{10.1007/s11083-007-9069-x}.

\bibitem{MasPec11}
D.~Ma{\v s}ulovi{\'c} and M.~Pech.
\newblock Oligomorphic transformation monoids and homomorphism-homogeneous structures.
\newblock {\em Fund. Math.}, 212(1):17--34, 2011.
\newblock \doi{10.4064/fm212-1-2}.

\bibitem{Mas15}
D.~Ma\v{s}ulovi\'{c}.
\newblock Towards the characterization of finite homomorphism-homogeneous oriented graphs with loops.
\newblock {\em Graphs Combin.}, 31(5):1613--1628, 2015.
\newblock \doi{10.1007/s00373-014-1435-z}.

\bibitem{PecPec15}
C.~Pech and M.~Pech.
\newblock On polymorphism-homogeneous relational structures and their clones.
\newblock {\em Algebra Universalis}, 73(1):53--85, 2015.
\newblock \doi{10.1007/s00012-014-0310-3}.

\bibitem{PecPec16b}
C.~Pech and M.~Pech.
\newblock Towards a {R}yll-{N}ardzewski-type theorem for weakly oligomorphic structures.
\newblock {\em Math. Log. Quart.}, 62(1-2):25--34, 2016.
\newblock \doi{10.1002/malq.201400067}.

\bibitem{PecPec18}
C.~Pech and M.~Pech.
\newblock Polymorphism clones of homogeneous structures: gate coverings and automatic homeomorphicity.
\newblock {\em Algebra Universalis}, 79(2):79:35, 2018.
\newblock \doi{10.1007/s00012-018-0504-1}.

\bibitem{MPPhD}
M.~Pech.
\newblock {\em Local Methods for Relational Structures and their Weak Krasner Algebras}.
\newblock PhD thesis, University of Novi Sad, 2009.
\newblock \doi{10.2298/NS20090522PECH}.

\bibitem{Pou24}
M.~Pouzet.
\newblock Well-quasi-ordering and embeddability of relational structures.
\newblock {\em Order}, 41(1):183--278, 2024.
\newblock \doi{10.1007/s11083-024-09664-y}.

\bibitem{RusSch10}
M.~Rusinov and P.~Schweitzer.
\newblock Homomorphism-homogeneous graphs.
\newblock {\em J. Graph Theory}, 65(3):253--262, 2010.
\newblock \doi{10.1002/jgt.20478}.

\bibitem{TotWal21}
E.~T\'{o}th and T.~Waldhauser.
\newblock Polymorphism-homogeneity and universal algebraic geometry.
\newblock {\em Discrete Math. Theor. Comput. Sci.}, 23(2):Paper No. 2, 18, [2021--2023].
\newblock \doi{10.46298/dmtcs.6904}.

\bibitem{Woo77}
R.~E. Woodrow.
\newblock {\em Theories with a finite number of countable models and a small language}.
\newblock PhD thesis, Simon Fraser University, Burnaby, British Columbia, Canada, 1977.

\end{thebibliography}
\end{document}